\newtheorem{thm}{Theorem}
\newtheorem{prop}[thm]{Proposition}
\newtheorem{lem}[thm]{Lemma}
\newtheorem{cor}[thm]{Corollary}
\newtheorem{rem}[thm]{Remark}
\numberwithin{equation}{section} \numberwithin{thm}{section}
\newcommand{\R}{{\mathbb R}}
\newcommand{\N}{{\mathbb N}}
\newcommand{\Z}{{\mathbb Z}}
\newcommand{\E}{{\mathbb E}}
\renewcommand{\P}{{\mathbb P}}
\newcommand{\eps}{\varepsilon}
\newcommand{\sS}{\mathcal{S}}
\newcommand{\sT}{\mathcal{T}}
\newcommand{\intr}{\mathsf{int}}
\newcommand{\bd}{\partial}
\newcommand{\var}{\mathsf{var}}
\newcommand{\cl}{\overline}
\newcommand{\ind}{\mathbf{1}}
\newcommand{\sR}{\mathcal R}
\newcommand{\oV}{\overline{V}}
\newcommand{\osV}{\overline{\mathcal V}}
\newcommand{\Ha}{\mathcal{H}}
\numberwithin{equation}{section}  
\begin{document}

\title{Geometric functionals of fractal percolation}

\author{Michael A.\ Klatt}
\address{Department of Physics, Princeton University, Princeton, New Jersey 08544, USA, mklatt@princeton.edu}
\author{Steffen Winter}
\address{Karlsruhe Institute of Technology, Department of Mathematics, 76128 Karlsruhe, Germany, steffen.winter@kit.edu}

%

\begin{abstract}
Fractal percolation exhibits a dramatic topological phase transition,
changing abruptly from a {dust-like set to} a system spanning cluster. 
The transition points are unknown and difficult to estimate.
In many classical percolation models the percolation thresholds have been approximated well using additive geometric functionals, known as intrinsic volumes.
Motivated by the question whether a similar approach is possible for fractal models,
we introduce corresponding geometric functionals for the fractal percolation process $F$. They arise as limits of expected functionals of finite approximations of $F$.
We establish the existence of these limit functionals and obtain explicit
formulas for them as well as for their finite approximations.
\end{abstract}

\keywords{fractal percolation, Mandelbrot percolation, Minkowski functionals, intrinsic volumes, curvature measures, fractal curvatures, random self-similar set, percolation threshold} 

\subjclass[2000]{28A80, 60K35, 82B43}

\maketitle

\section{Introduction} \label{sec:intro}

Fractal percolation in $\R^d$ is a family of random subsets of the unit cube $J:=[0,1]^d\subset\R^d$ depending on two parameters $M\in\N_{\geq 2}$
and $p\in[0,1]$, which is informally defined as follows: In the first step divide $J$ into $M^d$ closed subcubes of side length $1/M$. Each of these subcubes is kept with probability $p$ and discarded with probability $1-p$ independently of all other subcubes.
Then this construction is {iterated.} Let $F_n$, $n\in\N$, denote the union of the subcubes kept in the $n$-th step. {They arise as follows.} Assuming that $F_{n-1}$ is already constructed, in the $n$-th step each cube in $F_{n-1}$ (of side length $1/M^{n-1}$) is  divided into $M^d$ subcubes (of side length $1/M^{n}$) and each of these subcubes is kept (and included in $F_n$) with probability $p$ independently of all other subcubes and of the previous steps.
This way one obtains a decreasing sequence $F_{0}:=J\supset F_{1}\supset F_{2}\supset \ldots$ of (possibly empty) random compact sets. The limit set
\begin{align}
  F:= \bigcap_{n\in\N_0} F_{n}
\end{align}
is known as \emph{fractal percolation} or \emph{Mandelbrot percolation}, see e.g.\
\cite{Mandelbrot74,CCD88}. It is well known that $F$ is almost surely empty if $p\leq 1/M^d$, i.e.\ if on average not more than one of the $M^d$ subcubes of any cube in the construction survives. For $p>1/M^d$, however, there is a positive probability (depending on $p,M$ and $d$) that $F\neq \emptyset$, and conditioned on $F$ being nonempty,  the Hausdorff dimension and equally the Minkowski dimension of $F$ are almost surely given by the number
\begin{align} \label{eq:dimF}
   \dim_H F = D:=\frac{\log(M^d p)}{\log(M)} =d-\frac{\log(1/p)}{\log(M)},
\end{align}
see e.g.\ \cite{CCD88}.
The sets $F$ are among the simplest examples of self-similar random sets as introduced in  \cite{Falconer86,Graf87,MW86}.

Beside many other properties, in particular their connectivity has been studied.
{Fractal percolation exhibits a dramatic topological phase transition
-- for all dimensions $d\geq 2$ and all $M\in\N_{\geq 2}$ -- when the
parameter $p$ increases from 0 to 1. There is a critical probability
$p_c=p_c(M,d)\in(0,1)$ such that, for $p<p_c$, the set $F$ is almost
surely totally disconnected (`\emph{dustlike}'), and, for $p\geq p_c$,
$F$ has connected components larger than one point with probability 1
(provided $F$ is not empty), see \cite{BroCam10}.
Remarkably, the phase transition is discontinuous, that is, the
probability of connected components larger than one point in $F$ is
strictly positive at $p_c$.}

{For $d=2$, Chayes, Chayes and Durrett \cite{CCD88} observed that for
$p\geq p_c$ there is even a positive probability that $F$
\emph{percolates}, meaning here that $F$ has a connected component which
intersects the left and the right boundary of $J$, that is
$\{0\}\times[0,1]^{d-1}$ and $\{1\}\times[0,1]^{d-1}$.
Note that also at $p_c$,
there is a positive probability that $F$ percolates, a simpler proof of
this fact was provided in \cite{DM90}.}

{In dimension $d\geq 3$, $F$ is known to percolate with positive
probability for all $p>p_c(M,d)$, provided $M$ is large enough, see
\cite{BroCam10}. This is conjectured to hold for all $M$.
An open question is whether there is a positive probability for
percolation at the corresponding threshold in dimensions $d\geq 3$.
We refer to \cite{BroCam08,BroCam10} for a more detailed
discussion of this and related issues.}

  \begin{figure}[t]
      \begin{center}
        \includegraphics[width=\textwidth]{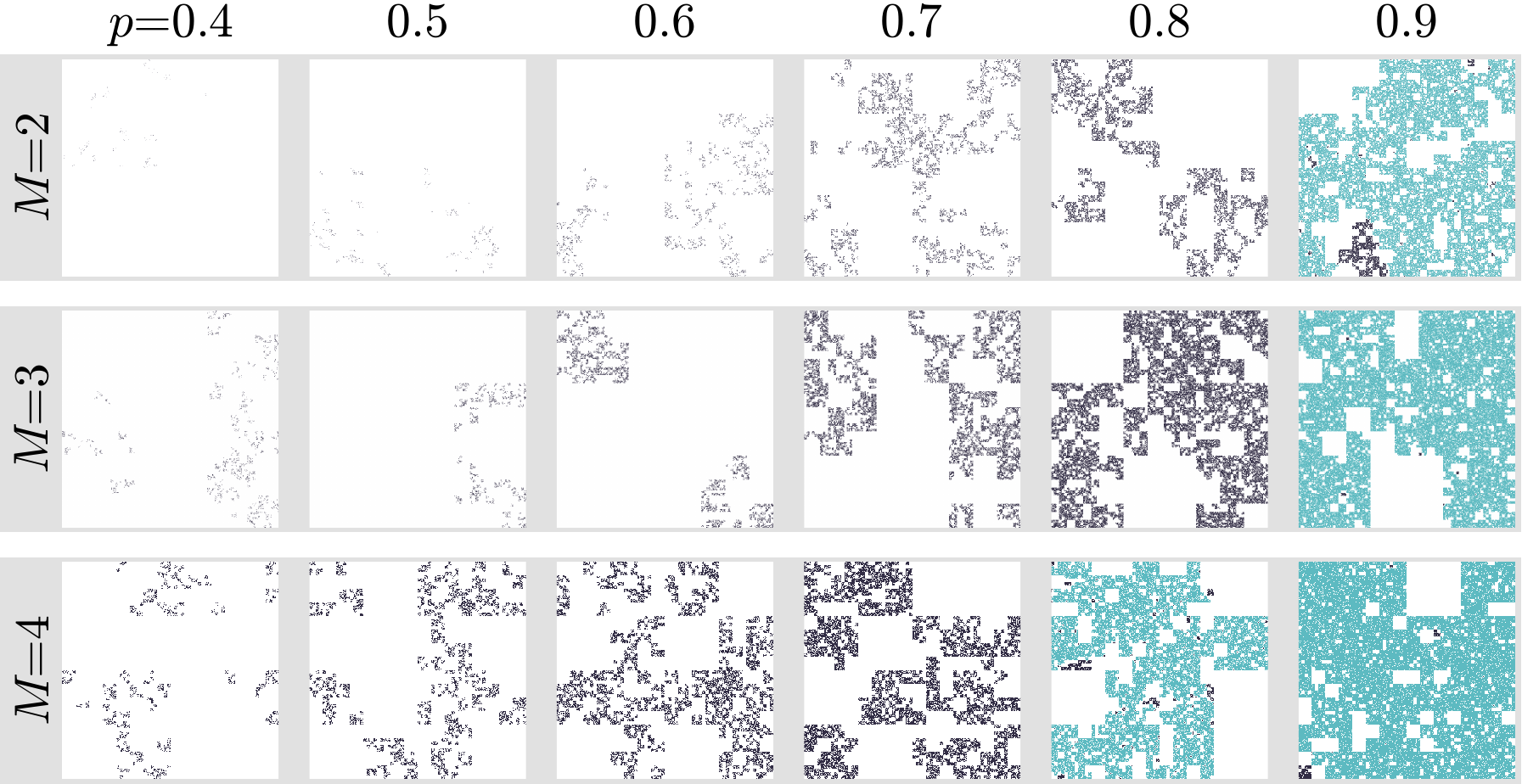}
      \end{center}
      \caption{\label{fig:realizations} Finite approximations of fractal percolation: realizations for different values of the survival probability $p$ and the linear number of subdivisions $M$. `Percolating' clusters that span the system in vertical and horizontal direction are {highlighted}.}
  \end{figure}

Like in many other percolation models, the exact values of $p_c(M,d)$
are not known.
In fact, for this model the situation is even worse than usual.
Classical techniques using finite size scaling apparently fail in this
fractal model, since a proper scaling regime is inaccessible with modern
hardware.
Some rigorous lower and upper bounds on $p_c(M,d)$ have been obtained in
particular for $d=2$, see Sec.~\ref{sec:percolation}, but they are not
tight.

Morphometric methods to estimate thresholds in percolation models have
been proposed in~\cite{MeckeWagner1991} and intensively studied in the
physics literature~\cite{MeckeSeyfried2002,NMW08,KSM17,RouCol16}, {see also the recent study in homological percolation \cite{BobSkr19} using topological data analysis.}
{These methods} are based on additive functionals from integral geometry, in
particular the Euler characteristic, and rely on the observation that in
many percolation models the expected Euler characteristic per site (as a
function of the model parameter $p$)--which can easily be computed
analytically in many models--has a zero close to the percolation
threshold of the model.
{In dimension 2, a heuristic explanation is based on the
  representation of the Euler characteristic for polyconvex sets as the
  difference between the number of components and the number of holes.
Below the threshold, there are many small connected components, which can
hardly contain any holes,
resulting in a positive Euler characteristic.
Above the threshold, few large clusters form with complex
shapes that contain many holes, resulting in a negative Euler
characteristic.}
{The argument can, in fact, be related to an exact derivation of the
threshold for certain self-matching lattices~\cite{NMW08}.}
Based on empirical evidence and heuristic arguments, the zeros of the Euler characteristic provide
for many classes of percolation models reasonable approximations and
putative bounds on the thresholds that capture their dependence on
system parameters like the degree of anisotropy~\cite{KSM17}.
{More precisely, across many models, the zero of the Euler characteristic
  has been observed to be a lower bound for the percolation threshold
  $p_c$ if $p_c<\frac 12$, and to be an upper bound for $p_c$ if
  $p_c>\frac 12$.}
  {For self-matching lattices with $p_c=\frac 12$ (like the triangular
  lattice), the zero of the Euler characteristic is also found to be
  $\frac 12$.
  This case is the only one up to now in which the relation between
  Euler characteristic and percolation thresholds has been proven
  rigorously using symmetry arguments, see e.g.~\cite{NMW08}.}

In analogy to these findings for discrete and continuum percolation models, we introduce and study here some corresponding geometric functionals for fractal percolation and ask the question whether one can use them to predict or at least approximate percolation thresholds. It is natural to expect that the dramatic phase transition (from dust to strong connectivity) in these fractal models should leave at least some trace in geometric functionals such as the Euler characteristic.
Due to the self-similarity of the model, there is even some hope that -- although percolation is a global property -- the thresholds can be predicted by local information alone.

Note that $F$ as well as the construction steps $F_{n}$ 
are random compact subsets of the unit cube $[0,1]^d$. Moreover, due to their construction, the sets $F_{n}$ are finite (random) unions of cubes (of side length $1/M^n$). Therefore, each $F_{n}$ is almost surely \emph{polyconvex}, i.e., a finite union of convex sets, and so intrinsic volumes $V_0(F_{n}), V_1(F_{n}),\ldots,
V_d(F_{n})$ (also known as Minkowski functionals) and even curvature
measures are well defined for $F_{n}$ almost surely.

\begin{figure}[t]
  \begin{center}
    \includegraphics[width=0.8\textwidth]{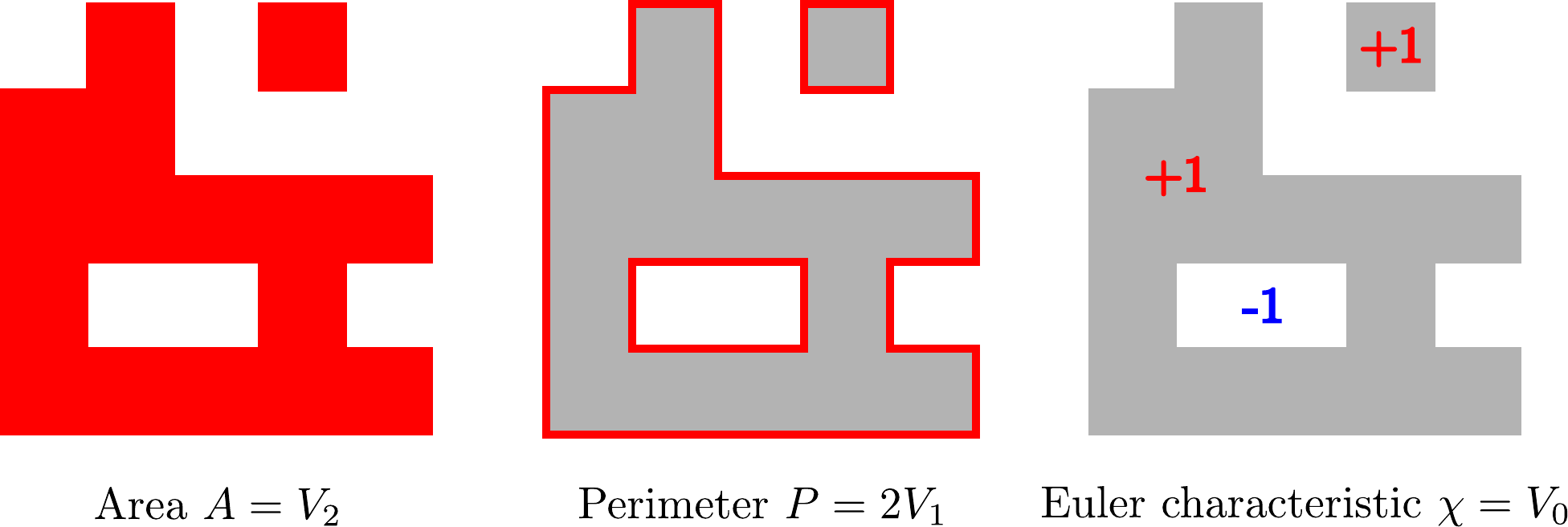}
  \end{center}
  \caption{\label{fig:Minkfuncts}
  {Illustration of the three intrinsic volumes in $\R^2$ -- area,
    boundary length and Euler characteristic (i.e., \#components $-$
  \#holes).}}
\end{figure}

{Intrinsic volumes form a (complete) system of geometric invariants, which are characterized by their properties. Among them are the volume $V_d$, the surface area $V_{d-1}$ and the Euler characteristic $V_0$, the remaining ones describe integrated curvature properties of the boundary.
For compact, convex sets $K\subset\R^d$, they are most easily introduced via the Steiner formula, which expresses the volume (Lebesgue measure) of the parallel sets $K_{\oplus \eps}:=\{x\in\R^d:\inf_{y\in K}||x-y||\leq \eps\}$, where $||\cdot||$ denotes the Euclidean norm, as a polynomial in $\eps$. The \emph{intrinsic volumes} $V_0(K),\ldots, V_d(K)$ of $K$ arise as the unique coefficients in this formula (up to normalization):
\begin{align*}
  V_d(K_{\oplus\eps})=\sum_{k=0}^d \kappa_{d-k} V_k(K) \eps^{d-k}.
\end{align*}
Here $\kappa_j$ denotes the volume of the unit ball in $\R^j$.
Intrinsic volumes are \emph{additive} functionals, i.e.\ 
for compact, convex sets $K,L\subset\R^d$ the relation
\begin{align} \label{eq:additive}
   V_k(K)+V_k(L)=V_k(K\cup L)+V_k(K\cap L)
\end{align}
holds, provided $K\cup L$ is again convex.} They can be extended
additively to the \emph{convex ring} $\sR^d$, the family of all compact
polyconvex sets. Since $\sR^d$ is closed with respect to unions and
intersections, equation \eqref{eq:additive} holds for any $K,L\in\sR^d$.
{Among the further properties of intrinsic volumes are invariance
with respect to rigid motions  and homogeneity, meaning that $V_k$
satisfies $V_k(\lambda K)=\lambda^k V_k(K)$ for any $K\in\sR^d$ and any
$\lambda>0$, where $\lambda K:=\{\lambda x: x\in K\}$. We refer
to \cite[Ch.~4]{Schneider14} or \cite[Ch.~14.2]{SchneiderWeil08} for more details on intrinsic volumes.}

While for the sets $F_n$ intrinsic volumes are well defined, the limit set $F$ is a fractal and so these functionals are not directly defined. Since the $F_n$ approximate the limit set $F$, as $n\to \infty$, we are interested in the expectations $\E V_k(F_{n})$, $k\in\{0,\ldots,d\}$,
and in particular in their limiting behaviour as $n\to\infty$.
It turns out that some appropriate rescaling is necessary in order to see convergence, which is closely related to the Hausdorff (and Minkowski) dimension of the limit set $F$. Our first main result is a general formula which expresses these limits in terms of lower dimensional mutual intersections of certain parts of the construction steps $F_n$.
 Let $J_1,\ldots, J_{M^d}$ be the $M^d$ closed subcubes into which $[0,1]^d$ is divided in the first step of the construction of $F$.
Denote by $F^j_{n}$, $j=1,\ldots,M^d$, the union of all subcubes kept in
the $n$-th step, that are contained in $J_j$ ({see
  Figure~\ref{fig:Fnj} for an illustration and} \eqref{eq:Fj_ndef} for a formal definition).
\begin{figure}[t]
     \begin{center}
     \includegraphics[width=0.9\textwidth]{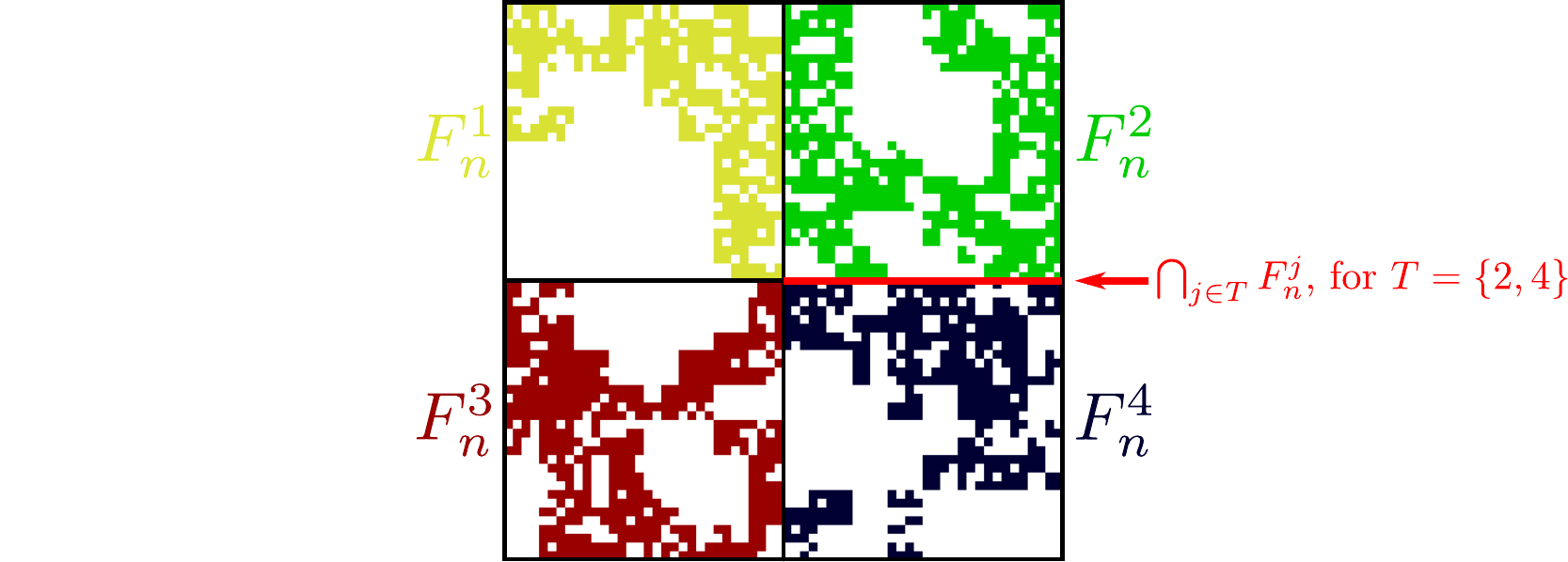}
     \end{center}
     \caption{ \label{fig:Fnj}{Illustration of the sets $F_n^j$
      as subsets of $F_n$ (for $M=2$ and $n=6$)
      and of an intersection $\bigcap_{j\in T} F_n^j$.
       The number of subsets $F_n^j$ is by definition always $M^2$.}
      }
\end{figure}
\begin{thm} \label{thm:Vk-limit-general}
Let $F$ 
be a fractal percolation on $[0,1]^d$ with parameters $M\in\N_{\geq 2}$ and {$p\in(0,1]$}. 
Let $D$ be {as given by~\eqref{eq:dimF}} and let $r:=1/M$.
  Then, for each $k\in\{0,\ldots,d\}$, the limit
  $$
  \osV_k(F):=\lim_{n\to\infty} r^{n(D-k)} \E V_k(F_{n})
  $$
  exists and is given by the expression
  \begin{align} \label{eq:Vk-limit-general}
    q_{d,k} +\sum_{T\subset\{1,\ldots,M^d\},|T|\geq 2}(-1)^{|T|-1} \sum_{n=1}^\infty r^{n(D-k)} \E V_k(\bigcap_{j\in T} F^j_{n}),
  \end{align}
  where $q_{d,k}:=V_k([0,1]^d)$ is the $k$-th intrinsic volume of the unit cube in $\R^d$.
\end{thm}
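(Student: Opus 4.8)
The plan is to combine the additivity of intrinsic volumes on the convex ring $\sR^d$ with the exact statistical self-similarity of the construction. Fix $k\in\{0,\ldots,d\}$ and recall that $F_n=\bigcup_{j=1}^{M^d}F^j_n$. Applying the inclusion--exclusion formula for the additive extension of $V_k$ and taking expectations gives
\[
  \E V_k(F_n)=\sum_{j=1}^{M^d}\E V_k(F^j_n)
  +\sum_{\substack{T\subset\{1,\ldots,M^d\}\\ |T|\ge 2}}(-1)^{|T|-1}\,\E V_k\Bigl(\bigcap_{j\in T}F^j_n\Bigr).
\]
The singleton terms are where self-similarity enters: conditionally on $J_j$ being kept (probability $p$), the set $F^j_n$ is a congruent copy, scaled by $r=1/M$, of an independent realization of $F_{n-1}$, while $F^j_n=\emptyset$ otherwise. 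By translation invariance and the homogeneity $V_k(rA)=r^k V_k(A)$ this yields $\E V_k(F^j_n)=p\,r^k\,\E V_k(F_{n-1})$, so that the singleton sum equals $M^d p\,r^k\,\E V_k(F_{n-1})$.

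The decisive simplification comes from \eqref{eq:dimF}: since $M^d p=M^D=r^{-D}$, we have $M^d p\,r^k=r^{-(D-k)}$. Hence, setting $a_n:=r^{n(D-k)}\E V_k(F_n)$, the identity above becomes the \emph{telescoping} recursion $a_n=a_{n-1}+b_n$ with
\[
  b_n:=r^{n(D-k)}\sum_{|T|\ge 2}(-1)^{|T|-1}\,\E V_k\Bigl(\bigcap_{j\in T}F^j_n\Bigr).
\]
Because $F_0=J$ we have $a_0=V_k(J)=q_{d,k}$, so summation gives $a_n=q_{d,k}+\sum_{m=1}^n b_m$. Letting $n\to\infty$ produces precisely the asserted formula \eqref{eq:Vk-limit-general}, the only remaining point being that $\sum_m b_m$ converges; as the sum over $T$ is finite, one may then freely interchange it with the sum over $m$.

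The convergence of $\sum_m b_m$ is the crux of the argument and the step I expect to be hardest. It suffices to bound each contact term $r^{n(D-k)}\E V_k(\bigcap_{j\in T}F^j_n)$ in absolute value. Now $\bigcap_{j\in T}F^j_n$ is nonempty only when the first-level cubes $\{J_j:j\in T\}$ share a common face $G$, of some codimension $c\ge 1$; the intersection then lies in $G$ and coincides, after rescaling, with the intersection of $|T|$ independent traces of fractal percolations on the $(d-c)$-dimensional face $G$. Writing $\alpha:=\log(1/p)/\log(M)>0$, a given one of the order $M^{n(d-c)}$ subfaces of side $r^n$ survives in all $|T|$ traces with probability $p^{n|T|}$, so the expected number $N$ of surviving subfaces is of order $M^{n((d-c)-|T|\alpha)}$; combining this with the elementary bound $|V_k(\,\cdot\,)|\le C(d)\,N\,r^{nk}$ for grid unions of subfaces leads to an estimate of the form
\[
  r^{n(D-k)}\,\E\bigl|V_k(\textstyle\bigcap_{j\in T}F^j_n)\bigr|\le C\,r^{\,n\,(c+(|T|-1)\alpha)},
\]
which decays geometrically since $c\ge 1$ and $|T|\ge 2$. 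Summing the finitely many geometric series over $T$ then gives absolute convergence. The role of the hypothesis $p>M^{-\min\{3,d\}}$ is to keep the relevant lower-dimensional traces in the regime where these estimates remain valid and where, carried out recursively, each contact term is identified as a genuine lower-dimensional analogue of $\osV_k$; for $d\le 3$ it reduces to the supercriticality $p>M^{-d}$ of $F$ itself, while the particular threshold $\min\{3,d\}$ is exactly the one preserved when passing from $F$ to its traces on $(d-1)$-dimensional faces, which suggests organizing the estimate as an induction on the dimension $d$.
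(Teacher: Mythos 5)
Your first half---inclusion--exclusion over the $F_n^j$, the identity $\E V_k(F_n^j)=p\,r^k\,\E V_k(F_{n-1})$, the observation $M^d p\,r^k=r^{-(D-k)}$, and the resulting telescoping recursion $a_n=a_{n-1}+b_n$ with $a_0=q_{d,k}$---is exactly the paper's argument (equations \eqref{eq:wk2}--\eqref{eq:proof-1-1}). Where you genuinely diverge is the convergence of $\sum_n b_n$, which the paper delegates to Proposition~\ref{prop:Rk-conv}. There the authors bound $C_k^{\var}(\bigcap_{j\in T}F_n^j)$ linearly in the number of \emph{all} level-$n$ cubes of one of the intersected percolations (Lemma~\ref{lem:intersect-num}) and then control that Galton--Watson count by a Chebyshev/second-moment argument (Proposition~\ref{prop:intersect-MPs}); it is this step that forces $p>r^3$ and hence the hypothesis $p>M^{-\min\{3,d\}}$. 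You instead count only the grid faces actually contained in the intersection, whose expected number is $O(M^{n(d-c)}p^{n|T|})$ by a straightforward first-moment computation (independence of the $|T|$ subtrees, at most $2^d$ cubes of each $J_j$ adjacent to a given face), and this yields the bound $C\,r^{\,n(c+(|T|-1)\alpha)}$ with $\alpha=\log(1/p)/\log M\ge 0$ and $c\ge 1$, which sums for \emph{every} $p\in(0,1]$; your route, if completed, would thus remove the restriction the authors explicitly could not remove for $d\ge 4$, and it matches the exact rates computed in the paper's $d=2$ case. The one step you may not call ``elementary'' is the inequality $|V_k(\cdot)|\le C(d)\,N\,r^{nk}$ for a union of $N$ grid faces: for $k<d-1$ the additive extension of $V_k$ is neither monotone nor subadditive, a union of $N$ convex sets has $2^N$ inclusion--exclusion terms, and the linear-in-$N$ bound is precisely the intersection-number estimate of the paper's Lemmas~\ref{lem:ringcor} and \ref{lem:intersect-num} (quoted from \cite{W08}); this must be proved or cited, and it is the real content of the convergence step. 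Finally, your closing speculation about the role of $p>M^{-\min\{3,d\}}$ misses the mark: in the paper it arises solely from the variance estimate (the requirement $r/p<t^2<r^{-2}$ needs $p>r^3$), not from any induction over traces, and under your own estimate no such condition is needed at all.
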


We point out that all the intersections occurring in \eqref{eq:Vk-limit-general} consist of at least two of the cubes $F^j_{n}$ and are thus contained in some hyperplane. Hence, {in this formula} only sets appear which can be studied in a lower dimensional ambient space allowing to use fractal percolations in lower dimensional cubes for the computations. This makes the formula practically useful for explicit calculations as carried out in $\R^1$ and $\R^2$ below. Note also that many of the intersections are actually empty and that there are a lot of symmetries between the remaining ones.

{While Theorem~\ref{thm:Vk-limit-general} states the existence
of the limits $\osV_k(F)$ for all parameters $p\in(0,1]$, the limit
  set $F$ is empty almost surely for $p\leq M^{-d}$. So, the limit $\osV_k(F)$
is not a functional of the limit set $F$ only, but
it also depends on the chosen approximation sequence $F_n$.
}

In $\R^2$ (and similarly for $\R$, see Corollary~\ref{cor:Vk-dim1}) we use the formula in Theorem~\ref{thm:Vk-limit-general} to derive more explicit expressions for the limits $\osV_k(F)$.
\begin{thm}
\label{thm:Vk-limit-dim2}
Let $F$ be a fractal percolation in $[0,1]^2$ with parameters $M\in\N_{\geq 2}$ and {$p\in(0,1]$}.
  Then,
  \begin{align*}
    \osV_2(F) 
    &=
    1, \qquad
    \osV_1(F) 
    =
    \frac{2M(1-p)}{M-p} \qquad \text{ and } \\
    \osV_0(F)&=1-\frac{2p(M-1)^2}{M-p}\left(\frac 3{M-1}-\frac{4p}{M-p}+\frac{p^2}{M-p^2}\right)\\
        &\qquad +\frac{2p(M^2-1)}{M^2-p}-\frac{4p^2(M-1)^2}{(M-p)^2}+\frac{p^3(M-1)^2(M+p^2)}{(M-p^2)(M^2-p^3)}.
  \end{align*}
 \end{thm}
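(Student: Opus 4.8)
The plan is to specialize Theorem~\ref{thm:Vk-limit-general} to $d=2$ and evaluate the three resulting series. Since $r^{n(D-k)}=M^{nk}(M^2p)^{-n}$, the weights are $p^{-n}$, $(Mp)^{-n}$, $(M^2p)^{-n}$ for $k=2,1,0$. First I would classify the subsets $T\subset\{1,\dots,M^2\}$ with $|T|\ge2$ and $\bigcap_{j\in T}J_j\ne\emptyset$: either $|T|=2$ and the two subcubes are edge-adjacent, so that $\bigcap_{j\in T}F^j_n$ sits on the shared edge and is $1$-dimensional; or the subcubes meet in a single interior vertex $v$, so that $\bigcap_{j\in T}F^j_n\subset\{v\}$. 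The latter happens for the $2(M-1)^2$ diagonally adjacent pairs, the $4(M-1)^2$ triples and the $(M-1)^2$ quadruples surrounding the $(M-1)^2$ interior vertices, and there are $2M(M-1)$ edge-adjacent pairs; all other $T$ give empty intersections.

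For $k=2$ every such intersection has dimension $\le1$ and hence vanishing area, so only $q_{2,2}=V_2([0,1]^2)=1$ survives and $\osV_2(F)=1$. For $k=1$ the point intersections contribute nothing, and for an edge-adjacent pair $V_1(\bigcap F^j_n)$ is the length of the intersection on the shared edge. The two traces of $F^{j}_n$ on that edge are independent $1$-dimensional fractal percolations with survival probability $p$, so up to a Lebesgue-null set their intersection is a $1$-dimensional fractal percolation with parameter $p^2$; hence $\E V_1(\bigcap F^j_n)=M^{-1}p^{2n}$. A single geometric series, the count $2M(M-1)$ and $q_{2,1}=2$ then yield $\osV_1(F)=\tfrac{2M(1-p)}{M-p}$ (the one-dimensional computation of Corollary~\ref{cor:Vk-dim1} serving as a template).

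For $k=0$ I would first dispose of the point intersections. A vertex $v$ lies in $F^j_n$ iff the nested chain of subcubes of $J_j$ shrinking to $v$ survives, an event of probability $p^n$; the chains inside distinct $J_j$ are independent, so $\E V_0(\bigcap_{j\in T}F^j_n)=p^{n|T|}$. Summing $(M^2p)^{-n}p^{n|T|}$ over $n$ gives the three closed series $\tfrac{p}{M^2-p}$, $\tfrac{p^2}{M^2-p^2}$, $\tfrac{p^3}{M^2-p^3}$, entering with signs $-,+,-$ and multiplicities $2(M-1)^2,4(M-1)^2,(M-1)^2$.

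The crux is the edge contribution, and here the shortcut that worked for $V_1$ fails: treating the edge intersection $H_n$ as a single percolation with parameter $p^2$ gives the \emph{wrong} Euler characteristic, because $H_n$ can contain isolated points where a segment of one trace meets a segment of the other only at a common endpoint, and its components merge differently. Instead I would read $V_0(H_n)$ off the induced CW-structure: cutting the edge into its $N=M^{n-1}$ finest segments $I_1,\dots,I_N$ with endpoints $e_0,\dots,e_N$,
\begin{equation*}
V_0(H_n)=\#\{k:e_k\in H_n\}-\#\{k:I_k\subset H_n\}.
\end{equation*}
Each $I_k\subset H_n$ with probability $p^{2n}$; an interior endpoint $e_k$ lies in $H_n$ iff \emph{both} traces reach it, and each trace reaches $e_k$ iff at least one of the two abutting segments survives, so for an endpoint of depth $d$ one gets $\P(e_k\in H_n)=(2p^n-p^{2n-d})^2$, there being $(M-1)M^{d-1}$ such endpoints and two extreme endpoints contributing $p^{2n}$ each. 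This gives
\begin{equation*}
\E V_0(H_n)=\bigl(2-M^{n-1}\bigr)p^{2n}+(M-1)\sum_{d=1}^{n-1}M^{d-1}\bigl(2p^n-p^{2n-d}\bigr)^2 .
\end{equation*}
Expanding the square and summing the resulting double geometric series in $n$ and $d$, multiplying by $-2M(M-1)$, and combining with the three point series and $q_{2,0}=1$ yields $\osV_0(F)$ after simplification. The main obstacle is precisely this bookkeeping — locating the isolated points and the correct merging of components, which produces the factor $(2p^n-p^{2n-d})^2$ in place of a naive $p^{2n}$; once $\E V_0(H_n)$ is correct, the rest is lengthy but routine algebra, best organized by collecting terms according to their geometric ratio $p^a/M^b$, whereupon the $(M^2-p^2)$-type contributions of the depth-$d$ endpoints and of the triples cancel, in accordance with the stated formula.
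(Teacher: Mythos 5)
Your proposal is correct, and the overall skeleton (specializing Theorem~\ref{thm:Vk-limit-general} to $d=2$, classifying the nonempty intersections into the four types with multiplicities $2M(M-1)$, $2(M-1)^2$, $4(M-1)^2$, $(M-1)^2$, the trivial $k=2$ case, the reduction of the edge term for $k=1$, and the corner probabilities $p^{n|T|}$ for $k=0$) coincides with the paper's. Where you genuinely diverge is in the one computation that matters, namely $\E V_0(F_n^1\cap F_n^4)$. The paper reduces this to the intersection $K^{(1)}_n\cap K^{(2)}_n$ of two independent one-dimensional fractal percolations and computes $\beta_n:=\E V_0(K^{(1)}_n\cap K^{(2)}_n)$ by a self-similar recursion $\beta_n=Mp^2\beta_{n-1}+p_0(n)$ (Proposition~\ref{prop:MPcapMP}), obtained by decomposing the intersection into $M$ blocks $L_j$ and handling the boundary singletons $L_j\cap L_{j+1}$ and the off-diagonal terms $K_n^{(1),j}\cap K_n^{(2),l}$, $l=j\pm1$, via inclusion--exclusion. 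You instead compute $V_0(H_n)$ by a direct cell count, $V_0=\#\{\text{vertices in }H_n\}-\#\{\text{closed level-}n\text{ segments in }H_n\}$, stratifying the interior grid points by the depth $d$ of shared ancestry of the two abutting squares, which yields the inclusion probability $(2p^n-p^{2n-d})^2$ with multiplicity $(M-1)M^{d-1}$. I checked that your resulting expression $\E V_0(H_n)=(2-M^{n-1})p^{2n}+(M-1)\sum_{d=1}^{n-1}M^{d-1}(2p^n-p^{2n-d})^2$ expands to exactly the paper's closed form \eqref{eq:dim-red-k0}, so the remaining summation over $n$ is literally the same algebra as in the paper and your claim that it is routine is justified. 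You also correctly identify the trap (the naive parameter-$p^2$ coupling gives the right length but the wrong Euler characteristic because of isolated points and component merging), which is precisely what the paper's terms $p_0(n)$ and the functional $N$ account for. The trade-off: your vertex-minus-edge count is more elementary and self-contained for this single theorem, while the paper's recursion builds reusable machinery ($K^{(1)}\cap K^{(2)}$, Proposition~\ref{prop:dim_K1capK2}, the isolated-point count of Proposition~\ref{prop:N-of-MPcapMP}) that it needs again for the complements $C_n$ in Section~\ref{sec:5} and for Remark~\ref{rem:K1capK2}. One small point to make explicit if you write this up: interchanging the sum over $T$ with the sum over $n$ requires the convergence of each $T$-series, which in $d=2$ follows directly from your explicit geometric series (so Proposition~\ref{prop:Rk-conv} is not needed), but it should be stated.
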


While $\osV_2(F)$ (the rescaled limit of the expected area) is constant and thus independent of $M$ and $p$, the functional $\osV_1(F)$ (the rescaled limit of the expected boundary lengths) is monotone decreasing in $p$ (for each fixed $M$). Most interesting is the limit $\osV_0(F)$ of the expected Euler characteristics of $F_{n}$.

Figure~\ref{fig:V0}\,(left) shows $\osV_0(F(p))$ as a function of the
survival probability $p$ for different $M$ (black curves).
The dotted vertical line indicates the threshold below which $F$ is
almost surely empty.
The coloured curves depict the analytic expressions for finite approximations of
the limit $\osV_0(F(p))$ by the rescaled functionals $p\mapsto r^{nD(p)}
\E V_0(F_{n}(p))$ for different $n$ (obtained in the proof of Theorem~\ref{thm:Vk-limit-dim2}).
Already for $n=12$ the curves are almost indistinguishable from
$n=\infty$, indicating a fast convergence, which is rigorously confirmed
below, see Remark~\ref{rem:speed}.
The formulae for finite approximations are compared to simulations, see Remark~\ref{rem:simu}.
The marks depict the arithmetic mean over 2500 to 75000 samples
(depending on $n$).
The error bars depict the standard error of the mean.
The simulation results are in excellent agreement with the analytic
curves.

The functionals $\osV_k(F)${,} which are based on the approximation of $F$ by the sequence $F_n$, provide a natural and intuitive first approach to quantify the geometry of fractal percolation $F$. One should however keep in mind that these limits most likely also depend on the approximation sequence. There are other natural sequences of sets which approximate $F$ well and which may even be better suited to capture certain aspects of the geometry of $F$. In particular, the parallel sets $F_{\oplus\eps}$, $\eps>0$ of $F$ are considered a good means of approximation, preserving many properties, and have been studied extensively also for (deterministic and random) self-similar sets, cf.\ e.g.\ \cite{W08,WZ13,Zaehle11}. Although the existence of the resulting limits (known as \emph{fractal curvatures}) has been established for random self-similar sets in \cite{Zaehle11}, parallel set approximation seems technically too difficult in order to derive explicit expressions for these limits even for the simplest examples. {We point out that the limit functionals $\osV_k(F)$ studied here are --- just like their relatives, the fractal curvatures -- closer in spirit to Minkowski content and dimension than to the Hausdorff dimension. They depend on (and are thus in principle capable of describing) how the studied set is embedded in the ambient space.} 

Note that in the current approach using the sets $F_n$ we consider closed cubes, meaning that two surviving subcubes in any finite approximation $F_n$ are connected even if they touch each other at a single corner. In the limit set $F$ such connections cannot survive (because it would require an infinite number of consecutive successes in a Bernoulli experiment with success probability $p$: the survival at each level $n$ of the two level-$n$ squares touching the corner).
Therefore, it might be advisable to seek for an approximation which avoids diagonal connections from the beginning. Such an approximation is provided by the closed complements of the $F_n$.
By connecting the subcubes in the complement, such non-surviving connections get disconnected already in the finite approximations $F_n$. More precisely, we study in Section ~\ref{sec:5} expectations $\E V_k(C_n)$, where $C_n:=\overline{[0,1]^d\setminus F_n}$ are the closed complements of the $F_n$ in the unit cube, and the limits
$$
   \osV^c_k(F):=\lim_{n\to\infty} r^{n(D-k)} \E V_k(C_n),
$$
with $D$ as in \eqref{eq:dimF}.
 We obtain for these limits a general formula (see Theorem~\ref{thm:Zk-limit-general2}), which is very similar to the one obtained in Theorem~\ref{thm:Vk-limit-general} for $\osV_k(F)$.
Again, for the case $d=2$, we have computed explicit expressions. Here we state only the formula for the Euler characteristic (i.e., the case $k=0$), the most interesting functional in connection with the percolative behaviour to be discussed in the next section (for the case $k=1$ see Proposition~\ref{prop:Vck-limit-dim2}).
 \begin{thm} \label{thm:Vck-limit-dim2}
Let $F$ be a fractal percolation in $[0,1]^2$ with parameters $M\in\N_{\geq 2}$ and $p\in(1/M^2,1]$.
  Then,
  \begin{align*}
       \osV^c_0(F)&= M^2(1-p)\frac{p^3+(M-1)p^2+(M-1)p-M}{(M^2-p^3)(M-p)}.
  \end{align*}
\end{thm}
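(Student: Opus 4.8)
The plan is to specialise the general formula of Theorem~\ref{thm:Zk-limit-general2} to the planar case $d=2$ and the Euler characteristic $k=0$, and then to evaluate the resulting intersection terms explicitly. Writing $C^j_n:=\overline{J_j\setminus F_n}$ for the portion of the complement $C_n$ inside the first-level cell $J_j$, we have $C_n=\bigcup_{j=1}^{M^2}C^j_n$, so that additivity of $V_0$ on the convex ring yields an inclusion--exclusion expansion over subsets $T\subset\{1,\dots,M^2\}$ exactly parallel to the one in Theorem~\ref{thm:Vk-limit-general}. The key geometric observation, as in the discussion following Theorem~\ref{thm:Vk-limit-general}, is that for $|T|\ge 2$ the intersection $\bigcap_{j\in T}C^j_n$ is contained in the one-skeleton of the $M\times M$ grid: it is supported on a shared internal edge when $T$ consists of two edge-adjacent cells, and it is a single interior grid vertex (or empty) when $T$ involves diagonally placed cells or three or more cells. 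Thus every term of the expansion is reduced to an ambient dimension $\le 1$ and can be computed with the one-dimensional results.

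First I would treat the single-cell contributions, which after resolving the self-similar recursion on the surviving cells produce the base term furnished by Theorem~\ref{thm:Zk-limit-general2}; the factor $M^2(1-p)$ in the final answer originates here, from cells that fail already at the first level and contribute a full square to $C_n$. Next I would organise the intersection terms by symmetry type. For an edge-adjacent pair $\{i,i'\}$ --- there are $2M(M-1)$ of them --- the set $C^i_n\cap C^{i'}_n$ lives on the common edge $e$, and a point in the relative interior of a level-$n$ subinterval of $e$ is \emph{absent} from this intersection precisely when the level-$n$ cells abutting $e$ from \emph{both} sides survive. Hence the edge intersection is the complement, within $e$, of a two-sided survival pattern, and its expected Euler characteristic is evaluated via the one-dimensional analysis of Corollary~\ref{cor:Vk-dim1} with the survival probability adjusted to reflect this two-sidedness; this is the source of the $p^2$ and $p^3$ terms. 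The remaining terms are the point configurations at the $(M-1)^2$ interior vertices, where a vertex $v$ contributes $V_0=1$ whenever it lies in the intersection (with probability tending to one), weighted by the inclusion--exclusion signs over the two, three and four cells meeting at $v$.

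Each configuration type yields, after the rescaling $r^{nD}=(M^2p)^{-n}$, a geometric series in $n$. The edge contributions produce ratios such as $p/M$ coming from the two-sided removal pattern, while the point contributions at interior vertices and the sub-leading parts of the edge terms share the ratio $(M^2p)^{-1}$ and must be combined; after the full inclusion--exclusion these spurious contributions cancel or recombine, leaving only the denominators $M-p$ and $M^2-p^3$ alongside the factor $M^2(1-p)$. Summing the series, applying the respective multiplicities and signs, adding the base term, and consolidating over the common denominator $(M^2-p^3)(M-p)$ then gives, after simplification, the stated closed form.

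The main obstacle is the combined probabilistic and combinatorial bookkeeping in the middle step: one must compute each one-dimensional expected Euler characteristic \emph{including its boundary vertices}, since the point $v$ at an interior grid vertex is simultaneously an endpoint of the adjacent edge sets and the entire content of the diagonal-pair, triple and quadruple intersections meeting at $v$. Keeping the two-sided survival probabilities, the inclusion--exclusion signs, and these shared vertex contributions mutually consistent --- so that the spurious $(M^2p)^{-1}$ series cancel exactly --- is the delicate part. Once these weights are pinned down, the remaining summation of geometric series and partial-fraction simplification is routine, in complete analogy with the derivation of $\osV_0(F)$ in Theorem~\ref{thm:Vk-limit-dim2}.
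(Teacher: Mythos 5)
Your overall strategy mirrors the paper's: specialize Theorem~\ref{thm:Zk-limit-general2} to $d=2$, $k=0$, classify the intersections $\bigcap_{j\in T}C_n^j$ into the four types of Figure~\ref{fig:J_j}, reduce the edge-adjacent type to one dimension, and sum geometric series; your treatment of the vertex configurations (probability $(1-p^n)^\ell$, binomial expansion, cancellation of the $(M^2p)^{-n}$ series against the base term) matches Lemma~\ref{lem:E2-E4}. However, your handling of the edge-adjacent pairs --- the term $E_1$, which carries most of the content --- contains a genuine error. For $x$ in the relative interior of a level-$n$ subinterval of the common edge $e$, one has $x\in C_n^i\cap C_n^{i'}$ if and only if \emph{both} abutting level-$n$ cells have been discarded; equivalently, $x$ is absent as soon as \emph{at least one} of them survives. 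You assert the opposite (``absent precisely when both sides survive''), which amounts to identifying $C_n^i\cap C_n^{i'}$ with the closed complement of the two-sided survival set $K_n^{(1)}\cap K_n^{(2)}$, i.e.\ with a fractal percolation complement at adjusted parameter $p^2$. By De Morgan that set is $C_n^i\cup C_n^{i'}$ (this is exactly \eqref{eq:compl-dim1-proof1}); the set you actually need is the closure of the complement of the \emph{union} $K_n^{(1)}\cup K_n^{(2)}$. The two differ already at leading order: the correct $\E V_0(D_n^{(1)}\cap D_n^{(2)})$ grows like $2(Mp)^n\,M(1-p)/(M-p)$ (Theorem~\ref{thm:MPcapMP-compl}), a contribution the parameter-$p^2$ ansatz misses entirely, so your $E_1$ and hence the final formula would come out wrong.

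Two further ingredients are needed that your outline does not supply. First, expressing $V_0$ of $\overline{e\setminus(K_n^{(1)}\cup K_n^{(2)})}$ through the sets $K_n^{(i)}$ forces you to control the \emph{isolated points} of $K_n^{(1)}\cap K_n^{(2)}$ in the interior of $e$ (Figure~\ref{fig:isolated-points}): taking the closed complement merges the two components adjacent to each such point, so $\E N(K_n^{(1)}\cap K_n^{(2)})$ enters with a nonzero coefficient (Lemma~\ref{lem:compl-dim1}), and this quantity requires its own recursion (Proposition~\ref{prop:N-of-MPcapMP}); it does not reduce to Corollary~\ref{cor:Vk-dim1} with a modified $p$. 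Second, when the first-level cell $J_i$ is discarded, its trace on $e$ is the \emph{whole} edge, so the correct one-dimensional surrogate is $\hat D_{n-1}^{(i)}$, equal to $D_{n-1}^{(i)}$ with probability $p$ and to the full interval (not the empty set) with probability $1-p$, cf.~\eqref{eq:CncapCn}; you note the analogous effect for the base term but do not carry it into the edge term. Until the union/intersection confusion is corrected and these two points are incorporated, the computation does not produce the stated closed form.
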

\begin{figure}[t]
     \begin{center}
       \includegraphics[width=\textwidth]{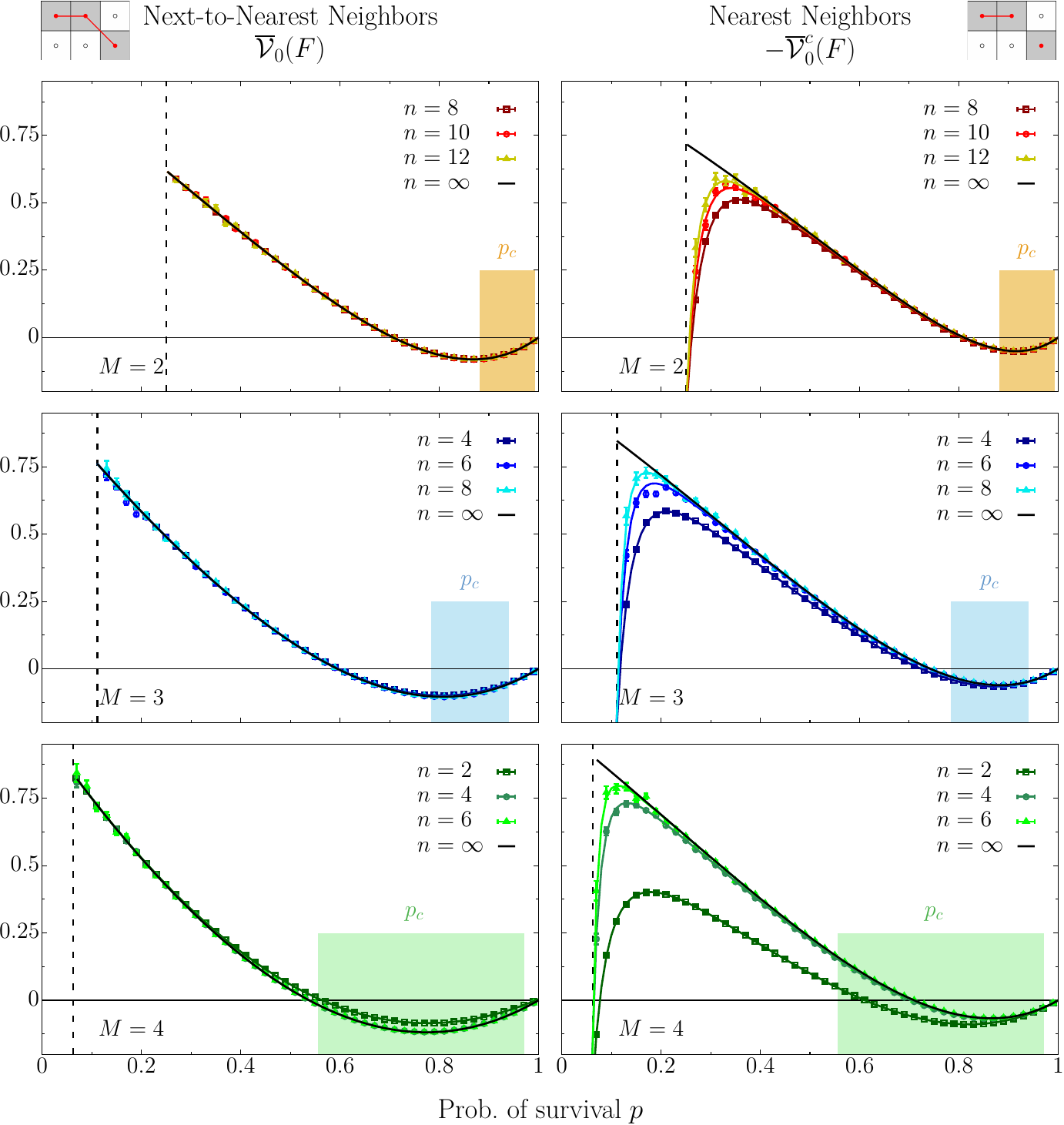}
     \end{center}
     \caption{ \label{fig:V0}
     Rescaled expected Euler characteristic of finite approximations $F_n$ (left) and their closed complements $C_n$ (right) as functions of the survival probability $p$ for $M=2$ (top), $M=3$ (center) and $M=4$ (bottom).
     Each plot compares finite approximations with increasing $n$ to the limit curve ($n=\infty$), that is, to $p\mapsto \osV_0(F(p))$ given in Theorem~\ref{thm:Vk-limit-dim2} (left) and $p\mapsto -\osV_0^c(F(p))$ given in Theorem~\ref{thm:Vck-limit-dim2}.
     The {shaded} areas indicate the rigorously known bounds on the percolation threshold, see~\eqref{eq:pc-bounds2}.}
\end{figure}
Note that in $\R^2$, $-V_0(C_n)$ is essentially the Euler characteristic of the set $F_n$ with all diagonal connections between cubes removed (up to some boundary effects along the boundary of $[0,1]^2$). Therefore, $-\osV^c_0(F)$ will be the functional of interest in the sequel in connection with the percolation properties of $F$.

More precisely, $1-V_0(C_n)+V_0(C_n\cap\partial[0,1]^2)$ corresponds to the Euler characteristic of the cell complex with vertex set given by the squares of $F_{n}$, edges between any two squares if they intersect in a common side and faces given by four edges forming a square. It can be shown that the effect of the last summand $V_0(C_n\cap\partial[0,1]^2)$ is asymptotically negligible. The approach corresponds to considering nearest neighbors in $\Z^2$ -- as opposed to taking also next-to-nearest neighbors into account, as done before.

\begin{figure}[t]
     \begin{center}
     \includegraphics[width=\textwidth]{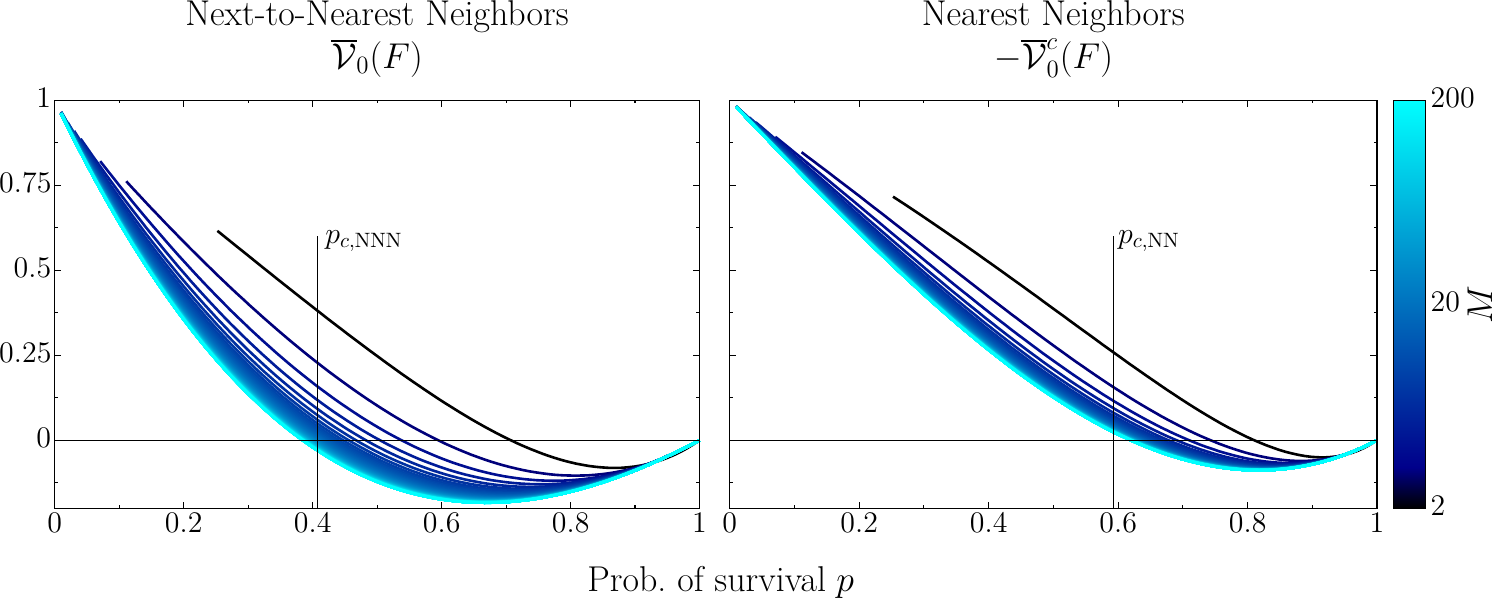}
     \end{center}
     \caption{ \label{fig:V1}
      For increasing values of the number of subdivisions $M$ (color coded), the rescaled expected Euler characteristic of fractal percolation (left) and its complement (right) are plotted as functions of $p$.
      The limiting curve (red) for $M\to\infty$ corresponds to the mean Euler characteristic per site (rescaled by the intensity) of site percolation on $\Z^2$ with eight or four neighbors, respectively.
     }
\end{figure}

\section{Relation with percolation thresholds} \label{sec:percolation}

We start by recalling some known results concerning the percolation thresholds $p_c=p_c(M)$ of fractal percolation in the plane. Already Chayes, Chayes and Durrett \cite{CCD88} established that, for any $M\in \N_{\geq 2}$,
\begin{align} \label{eq:pc-bounds}
 \sqrt{1/M}\leq p_c(M)\leq 0.9999.
\end{align}
In \cite{CC89} it is shown that the percolation threshold $p_{c,\text{NN}}$ of site percolation on the nearest neighbor (NN) graph on $\Z^2$ is a lower bound, i.e.\ $p_{c,\text{NN}}\leq p_c(M)$ for any $M\in\N_{\geq 2}$. Since $ 0.556\leq p_{c,\text{NN}}$, cf.~\cite{BE96}, this improves the above lower bound for any $M\geq 4$. Moreover, $\lim_{M\to\infty} p_c(M)=p_{c,\text{NN}}$, see \cite{CC89}.  It is believed that $p_c(M')\leq p_c(M)$ for $M'\geq M$ but this monotonicity is only established in special cases, e.g.\ if $M'=M^2$.
These bounds have been improved in \cite{White01,Don15} for some small $M$. The best known bounds for $M=2$ and $3$ are
\begin{align}
   \label{eq:pc-bounds2} 0.881\leq p_c(2)\leq 0.993 \quad \text{ and }\quad 0.784\leq p_c(3)\leq 0.940,
\end{align} respectively, and $p_c(4)\leq 0.972$, cf.~\cite{Don15}.

In view of the aforementioned observations in \cite{MeckeWagner1991, MeckeSeyfried2002, NMW08, KSM17}, that the zero of the expected Euler characteristic per site is close to the percolation thresholds in many percolation models, let us now discuss the connections between the limit functionals for $F$ introduced above and the connectivity properties in fractal percolation.

{\em $p_0$ is a lower bound for $p_c$.} Our first observation is that,
for any $M\in\N_{\geq 2}$, the function $p\mapsto \osV_0(F(p))$ has a
unique zero $p_0=p_0(M)$ in the open interval $(1/M^2,1)$, as suggested
by Figure \ref{fig:V0}\,(left). Moreover, $\osV_0(F(p))>0$ for $p<p_0$
and $\osV_0(F(p))<0$ for $p>p_0$.
{So far, this is in accordance with the observations in
classical percolation models.
Since $p_c>\frac12$, one would by analogy expect $p_0$ to be an upper
bound for $p_c$.
This is also what the naive heuristics suggests for our
  model:
below $p_c$ the limit set $F$ is totally disconnected and therefore, if
$\osV_0(F)$ is naively interpreted as the `Euler characteristic' of $F$
(i.e., as \#components $-$ \#holes), then it should be positive for all
$p<p_c$.}
By comparing $p_0$ with the known lower bounds for $p_c${, we find in contrast that $p_0(M)$ is not an upper bound but a lower bound for $p_c(M)$, i.e.}\
\begin{align*}
   p_0(M)\leq p_c(M), \quad \text{ for all } M\in\N_{\geq 2}.
\end{align*}
Indeed, for $M=2$ and $3$, $p_0(M)$ is below the lower bounds for $p_c(M)$ of Don, cf.\ \eqref{eq:pc-bounds2}, while for $M\geq 4$, $p_0(M)\leq 0.556$, which is the lower bound for the site percolation threshold $p_{c,\text{NN}}$ due to van den Berg and Ermakov \cite{BE96}.
      Although $p_0$ is a lower bound for $p_c$, unfortunately it is not very tight. In particular, it does not improve the known bounds.

\label{it:2} {\em The large-$M$ limit of $p_0(M)$.}
{In} analogy with $p_c$, for which $p_c(M)\to p_{c,\text{NN}}$ as $M\to\infty$, we observe that also the zeros $p_0(M)$ converge to a limit, as $M\to\infty$.  The (pointwise) limit of the functions $p\mapsto\osV_0(F(p))$, as $M\to \infty$, is the function $v$ given by
      $$
      v(p):=1-4p+4p^2-p^3, \qquad p\in(0,1],
      $$
      which is the red curve depicted in Figure~\ref{fig:V1} (left). It turns out that $v(p)$ coincides (up to a factor $p$) with the {\em mean Euler characteristic per site} $\oV_0(\Z^{2,\text{NNN}};p)$ of site percolation on the next-to-nearest neighbor (NNN) graph on $\Z^2$, cf.\ ~\cite{NMW08}. In particular, this implies for the zeros that
      \begin{align*}
        \lim_{M\to\infty} p_0(M)=p_{0,\text{NNN}},
      \end{align*}
      where $p_{0,\text{NNN}}=(3-\sqrt{5})/2$ is the unique zero of $v$ in $(0,1)$, i.e.\ of $p\mapsto \oV_0(\Z^{2,\text{NNN}};p)$.
      At first glance it might be surprising that a different site percolation model appears in the limit (NNN instead of NN, which showed up for the percolation thresholds). But this is consistent with the discussion before Theorem~\ref{thm:Vck-limit-dim2} -- there is too much connectivity in the approximation sets $F_n$. We will get back to this in a moment.

 {\em $p_{\min}$ -- a bound for $p_c$?} For any $M\in\N_{\geq 2}$, the
 function $p\mapsto \osV_0(F_p)$ has a unique minimum
 $p_{\min}=p_{\min}(M)$ in the open interval $(1/M^2,1)$, which lies
 always to the right of $p_0$ (i.e., potentially  closer to $p_c$). This
 is another natural candidate to bound the percolation threshold. {At
   $p_{\min}$ the geometry is extremal in the sense that, intuitively
   speaking, the `growth speed' of the number of holes equals exactly the `growth speed' of the number of clusters.}
      For $M=2$, $p_{\min}$ is clearly a lower bound for $p_c(2)$, but as $M\to\infty$, $p_{\min}(M)\to 2/3$, which is above $p_{c,\text{NN}}$. So, for large $M$, $p_{\min}(M)$ is clearly not a lower bound for $p_c(M)$. This implies that $p_{\min}(M)$ can neither be a general lower nor a general upper bound for the percolation thresholds. Interesting open questions are at which $M$, $p_c(M)$ and $p_{\min}(M)$ change their order and whether $p_{\min}(M)$ (which can be interpreted as the parameter for which the difference between number of holes and the number of connected components is maximal) is related in some way to the percolation transition.

As the discussion before Theorem~\ref{thm:Vck-limit-dim2} suggests, there might be approximation sequences for $F$ which better capture the percolative behaviour of $F$ and one candidate sequence are the modified sets $F_n$ with all diagonal connections between cubes removed, which we studied by looking at the closed complements $C_n:=\overline{J\setminus F_n}$.
Let us now discuss possible connections with percolation thresholds of the corresponding limit functionals $\osV^c_0(F)$.

{\em $p_1$ -- a lower bound for $p_c$?} Figure~\ref{fig:V0} (right) shows plots of the functions $p\mapsto-\osV_0^c(F(p))$ for different $M$ (the black curves labelled `$n=\infty$'), again accompanied by some finite approximations for different $n$.
In Figure~\ref{fig:V1} (right) there are  plots of the functions $p\mapsto-\osV_0^c(F(p))$ for all $M$ together with the limit curve as $M\to\infty$.  Each of these curves possesses again a unique zero $p_1=p_1(M)$ in $(1/M^2,1)$.
It is apparent from the plots in Figure~\ref{fig:V0}, that $p_1(M)$ is larger than $p_0(M)$ and thus potentially closer to the percolation threshold $p_c(M)$. At least for $M=2, 3$, $p_1(M)$ is a better lower bound for $p_c(M)$. But is this true in general? Unfortunately not, as will become clear from looking at large $M$.

\label{it:2-2} {\em Large-$M$ limit of $p_1(M)$.} The (pointwise) limit of the functions $p\mapsto-\osV_0^c(F(p))$, as $M\to \infty$, is $$
      v^c(p):=-(1-p)(p^2+p-1)=p^3-2p+1, \quad p\in(0,1],
      $$
      which is the red curve depicted in Figure~\ref{fig:V1}~(right). It turns out to coincide (up to a factor $p$) with the {\em mean Euler characteristic per site} $\oV_0(\Z^{2,\text{NN}},p)$ of site percolation on the nearest neighbor graph on $\Z^2$ as a function of $p\in[0,1]$, see e.g.~\cite[eq.\ (5), p.\ 4]{NMW08}. In particular, one gets for the zeros that
      \begin{align*}
        \lim_{M\to\infty} p_1(M)=p_{0,\text{NN}},
      \end{align*}
      where $p_{0,\text{NN}}=(\sqrt{5}-1)/2\approx 0.618$ is the unique zero of $v^c$ in $(0,1)$. 
      Note that $p_{0,\text{NN}}$ is strictly larger than $p_{c,\text{NN}}\approx 0.59$. Thus for large $M$, $p_c(M)<p_1(M)$, while for $M=2,3$, one has $p_c(M)>p_1(M)$. So $p_1$ can neither be a general lower bound nor a general upper bound for $p_c$. This observation also rules out the minimum of $p\mapsto \osV^c_0(F(p))$ to be a good general bound in any way.

      These findings show that there is not such a close connection between the Euler characteristics and percolation thresholds in this fractal model as there are in other percolation models. An explanation, why the phase transition leaves no signature in the studied functionals might be that percolation happens in fact on lower dimensional subsets. Recently it has been shown, see \cite{BCJM13}, that for $p\geq p_c$ (and conditioned on $F$ being nonempty), the union $Z$ of all connected components of $F$ larger than one point forms almost surely a set of strictly smaller Hausdorff dimension than the remaining set $F\setminus Z$ (the dust), which has dimension $\dim_H F\setminus Z=\dim_H F=D$ almost surely. The rescaling with $r^{Dn}$ of the geometric functionals essentially means that they do not see the lower dimensional set $Z$ on which percolation occurs. So from the point of view of the Hausdorff dimension, our result is consistent with the findings in \cite{BCJM13}. But in \cite{BCJM13}, it is also shown that in contrast the Minkowski dimensions of $Z$ and $F\setminus Z$ coincide almost surely for $p\geq p_c$. Since our approximation of $F$ by unions of boxes $F_n$ is 
      rather related to the Minkowski (or box) dimension than to the Hausdorff dimension, our results support the hypothesis that, also in the Minkowski setting, the effect of the dust dominates that of the larger components, though not on the level of dimension but on the refined level of associated measures or contents as provided by our functionals. Long before percolation occurs (i.e.\ for $p<p_c$), the expected Euler characteristic $\E V_0(F_n)$ becomes negative, i.e.\ it detects more holes than components in the approximations $F_n$, which indicates that the $n$-th approximation of the dust must have a lot of structure which only disappears in the limit. More refined methods are necessary to separate the dust from the larger clusters. It might for instance be worth to look at the Euler characteristic of the percolation cluster in finite approximations.

\section{{Outlook and outline}}
      We emphasize that, although our work is motivated by questions regarding the percolation properties,
      our focus here is on establishing the existence of the geometric limit functionals $\osV_k(F)$ and $\osV_k^c(F)$ (Theorems~\ref{thm:Vk-limit-general} and \ref{thm:Zk-limit-general2}), and on computing them explicitly in dimension 1 (Corollaries~\ref{cor:Vk-dim1} and \ref{cor:5.2}) and 2 (Theorems~\ref{thm:Vk-limit-dim2}, \ref{thm:Vck-limit-dim2} and Proposition~\ref{prop:Vck-limit-dim2}).
      The methods developed 
      here can be transferred to other random (self-similar) models. The functionals may have other applications. Just as fractal curvatures, they clearly carry geometric information beyond the fractal dimension, but unlike them, they can be computed explicitly for random sets (at least in some cases). Even more importantly, they can be estimated well from the finite approximations, see Remarks~\ref{rem:speed} and \ref{rem:speed2} for a discussion of the speed of convergence of $r^{Dn}\E V_0(F_n)$ and $r^{Dn}\E V_0(C_n)$ as $n\to\infty$, and see Remark~\ref{rem:simu} for a practical demonstration. Hence the functionals may serve as robust and efficient geometric descriptors in applications and may e.g.\ help to distinguish different geometric structures of the same fractal dimension. It is an aim of future research to develop this ``box counting'' approach {to geometric functionals} further to work for general (random) fractals.
      {In view of the fast convergence of these functionals it is another
intriguing question whether a similar speed of convergence can be expected for the percolation probabilities of $F_n$.} 
       {The functionals $\osV_k(F)$ describe first order properties of the random set $F$ in a sense that can be made precise: for $p>M^{-d}$, the almost sure convergence, as $n\to\infty$, of the random variables  $r^{n(D-k)} V_k(F_{n})$ to some limit random variable $Z^k_\infty$ can be shown, and the limit of expectations $\osV_k(F)$ appears as the expectation of this limit variable $Z^k_\infty$. Moreover, the functionals $\osV_k(F)$ turn out to determine essentially the covariance structure and also higher moments of the limit variables $Z^k_\infty$. We discuss these results further in \cite{KW20}.}
      \paragraph{Outline.} The remainder of this article is organized as follows. In Section~\ref{sec:2}, we describe fractal percolation as a random self-similar set and introduce some notation and basic concepts. In Section~\ref{sec:3}, we study in detail the approximation of $F$ by the sets $F_n$, and in Section~\ref{sec:5} the approximation by the sets $C_n$. In both cases we prove first a general formula for arbitrary dimensions (Theorems~\ref{thm:Vk-limit-general} and \ref{thm:Zk-limit-general2}) which we then use to compute the limit functionals in $\R$ and $\R^2$. A careful analysis of the model in $\R$ is essential for the computations in $\R^2$. Additionally, it is necessary to understand the intersection of two independent copies of $F$ in $\R$, the analysis of which also provides a new point of view on the lower bound for $p_c$ in \eqref{eq:pc-bounds} obtained in \cite{CCD88}, see Remark~\ref{rem:K1capK2}. In the course of the proofs not only explicit expressions for the limit functionals are derived but also exact formulas for the $n$-th approximations, see in particular Remarks~\ref{rem:speed} and \ref{rem:speed2}. {Following another idea in the physics literature \cite{Knuefing05,MeckeSchoenhoefer15}, these formulas are also used to derive in Remark~\ref{rem:subdim} the \emph{fractal subdimensions} and the exact associated \emph{amplitudes} for this model, which is an alternative approach to refined information about fractal sets beyond fractal dimension.}

      Finally, in Section~\ref{sec:appendix} some estimates are proved which ensure the convergence of the series occurring in the main formulas in Theorems~\ref{thm:Vk-limit-general} and \ref{thm:Zk-limit-general2}. They are not needed for the further results in $\R$ and $\R^2$, as the convergence can be checked directly in these cases but ensure their validity in higher dimensions.

\section{Fractal percolation as a random self-similar set} \label{sec:2}
Fractal percolation $F$ in $\R^d$ with parameters $p\in[0,1]$ and $M\in\N_{\geq 2}$ is a random self-similar set generated by the following random iterated function system (RIFS) $\sS$ constructed on the basic set $J=[0,1]^d$. {Denote by $J_1,\ldots,J_{M^d}$ the $M^d$ subcubes of sidelength $r=1/M$ into which $J$} is divided in the first step of the construction of $F$ described above. $\sS$ is a random subset of the set $\Phi:=\{\phi_1,\ldots, \phi_{M^d}\}$, where $\phi_j$, $j=1,\ldots, M^d$, is the similarity which maps $J$ to $J_j$ (rotation and reflection free, for simplicity and uniqueness).
Each map $\phi_j$ is included in $\sS$ with probability $p$ independent of all the other maps. It is obvious that $\sS$ satisfies the open set condition (OSC) with respect to the interior $\intr(J)$ of $J$, since $\sS$ is a random subset of $\Phi$ and even the full set $\Phi$ satisfies OSC with respect to $\intr(J)$.

For obtaining $F$ as an invariant set of the RIFS $\sS$, we employ a Galton-Watson tree on the set of all finite words $\Sigma_*:=\bigcup_{n=0}^\infty \Sigma_n$, where $\Sigma_n:=\{1,\ldots,M^d\}^n$, $n\in\N_0$. In particular, $\Sigma_0=\{\eps\}$ 
where $\eps$ is the empty word of length $|\eps|=0$. For each $\sigma\in\Sigma_*$, let $\sS_\sigma$ be an independent copy of the RIFS $\sS$.
$\sS_\sigma$ contains a random number $\nu_\sigma$ of maps (with $\nu_\sigma$ being binomially distributed with $p$ and $M^d$
). Let $I_\sigma\subseteq \{1,\ldots,M^d\}$ be the set of indices of the maps in $\sS_\sigma$. It is convenient to denote these maps by $\phi_{\sigma i}$, $i\in I_\sigma$. Note that $|I_\sigma|=\nu_\sigma$. In particular, $I_\sigma$ may be empty.
We build a random tree $\sT$ in $\Sigma_*$ as follows:
set $\sT_0:=\{\eps\}$ and define, for $n\in\N_0$, $\sT_{n+1}:=\emptyset$, if $\sT_n=\emptyset$, and
$$
\sT_{n+1}:=\{\sigma i: \sigma\in\sT_n, i\in I_\sigma\},
$$
if $\sT_n\neq\emptyset$. Finally, we set
$$
\sT:=\bigcup_{n=0}^\infty \sT_n.
$$
$\sT$ can be interpreted as the population tree of a Galton-Watson process in which $\sT_n$ represents the $n$-th generation and $\sigma i\in\sT_{n+1}$, $i\in I_\sigma$ are the decendants of an individuum $\sigma\in\sT_n$.
The \emph{self-similar random set} associated with the RIFS $\sS$ is the set
$$
F:=\bigcap_{n=1}^\infty \bigcup_{\sigma\in\sT_n} J_\sigma,
$$
where, for any $\sigma\in\Sigma_*$ of length $|\sigma|=n\in\N$ and any set $K\subset\R^d$,
$$
K_\sigma:=
\phi_{\sigma|1}\circ \phi_{\sigma|2}\circ \ldots \circ \phi_{\sigma|n}(K).
$$
Here $\sigma|k$, $k\in\{1,\ldots,n\}$ denotes the word formed by the first $k$ letters of $\sigma$.
$F$ is called self-similar because of the following stochastic self-similarity property (which characterizes $F$ uniquely): if $F^{(i)}$, $i\in\{0,1,\ldots,M^d\}$ are i.i.d.\ copies of $F$ and $\sS$ is the corresponding RIFS as above, independent of the $F^{(i)}$, then
$$
F^{(0)}=\bigcup_{\phi_i\in\sS} \phi_i(F^{(i)}).
$$

In the language of the tree and the associated sets considered above, the construction steps $F_{n}$, $n\in\N$ of the fractal percolation process are given by
$$
F_{n}=\bigcup_{\sigma\in\sT_n} J_\sigma.
$$
Here the sets $J_\sigma$, with $|\sigma|=n$ encode the subcubes of level $n$ of the construction, and the above union extends over those subcubes $J_\sigma$, which survived all the previous steps, i.e.\ over all $\sigma$ for which all the cubes $J_{\sigma|i}$, $i\in\{1,\ldots, n\}$, have been kept in the $i$-th step of the construction.
We also introduce, for each $j\in\{1,\ldots,M^d\}$ and each $n\in\N$, the set
\begin{align} \label{eq:Fj_ndef}
F_n^j:=\bigcup_{{\sigma\in\sT_n},{\sigma|1=j}} J_\sigma,
\end{align}
being the union of those cubes of level $n$ which are subcubes of $J_j=\phi_j(J)$.
We will not make much use of the limit objects and their self-similarity in the sequel, we will mainly use the following basic properties of the construction steps $F_n$ and their parts $F_n^j$:
For any $j\in\{1,\ldots,M^d\}$ and any $n\in\N$, we have
\begin{align}
  \label{eq:basic-sim} F_n^j=\phi_j(\widetilde F_{n-1})
\end{align}
in distribution, where $\widetilde F_{n-1}$ is the random set which equals $F_{n-1}$ with probability $p$ and is empty otherwise (i.e.,\ $\widetilde F_{n-1}^j= F_{n-1}^j\cap \widetilde J_j$, where $\widetilde J_j$ is a random set independent of $F_{n-1}^j$, which equals $J_j$ with probability $p$ and is empty otherwise).
The homogeneity and motion invariance of the intrinsic volumes implies now in particular that
 \begin{align}
   \label{eq:basic-sim-2}
   \E V_k(F_n^j)=p \E V_k(\phi_j(F_{n-1}))=p r^k \E V_k(F_{n-1}),
 \end{align}
for any $k\in\{0,\ldots,d\}$ and any $j\in\{1,\ldots,M^d\}$, where $r=1/M$ is the scaling ratio of $\phi_j$.

\section{Approximation of $F$ by the sequence $(F_{n})_n$.} \label{sec:3}

Our first aim in this section is to prove Theorem~\ref{thm:Vk-limit-general}. {Let $M\in\N_{\geq 2}$ and $p\in(0,1]$ be arbitrary and
let $D$ be as defined in \eqref{eq:dimF}.} ($D$ is the Minkowski dimension of $F$ in case $M^dp\geq 1$ and negative otherwise.)
Set
\begin{align}\label{eq:vkdef}
\overline{v}_k(n):=r^{n(D-k)} \E V_k(F_{n}), \quad n\in\N_0,
\end{align}
where $F_{0}:=J=[0,1]^d$. Since the latter is a deterministic set, we have $\overline{v}_k(0)= V_k(F_{0})=q_{d,k}$. We are going to show that the limit $\osV_k(F)=\lim_{n\to\infty} \overline{v}_k(n)$ exists for any $k$ and, moreover, that it coincides with the expression stated in \eqref{eq:Vk-limit-general}. The first step is to derive a kind of renewal equation for the $\overline{v}_k$. (The approach is similar to the methods in \cite{W08,Zaehle11} which are based based on renewal theory. However, here we do not need the Renewal theorem as it is possible to argue directly.)

Setting
$$
w_k(n):= \overline{v}_k(n)-\overline{v}_k(n-1), \quad n\in\N,
$$
it is easy to see that
\begin{align} \label{eq:Z-w-relation}
\lim_{n\to\infty} \overline{v}_k(n)=\overline{v}_k(0)+\sum_{j=1}^\infty w_k(j),
\end{align}
i.e., the limit on the left exists if and only if the sum on the right converges.
(Indeed, by definition of $w_k$, we have $\overline{v}_k(n)=\overline{v}_k(n-1)+w_k(n)=\ldots =\overline{v}_k(0)+\sum_{j=1}^n w_k(j)$ for any $n\in\N$, and so in \eqref{eq:Z-w-relation} the limit on the left exists if and only if the partial sums on the right converge.) Therefore, it is enough to compute the functions $w_k$, which turns out to be easier than computing the $\overline{v}_k$ directly.
The relation
$$
\overline{v}_k(n)=\overline{v}_k(n-1)+w_k(n),\quad  n\in\N
$$
can be viewed as a (discrete) renewal equation with $w_k$ being the error term.
By definition of $w_k$, we have
\begin{align*} 
 \notag w_k(n)&= \overline{v}_k(n)-\overline{v}_k(n-1)= r^{n(D-k)} \E V_k(F_{n})- r^{(n-1)(D-k)} \E V_k(F_{n-1})\\
 \notag &= r^{n(D-k)}\left( \E V_k(F_{n})- r^{k-D} \E V_k(F_{n-1})\right)\\
  &= r^{n(D-k)}\left( \E V_k(F_{n})- M^dp \, r^{k} \E V_k(F_{n-1})\right),
\end{align*}
where we employed the relation $M^dp=r^{-D}$ in the last step.
Now the similarity relation \eqref{eq:basic-sim-2} implies
$
\sum_{j=1}^{M^d} \E V_k(F_n^j)=M^d p r^k \E V_k(F_{n-1}),
$
which we can insert in the above expression to obtain
\begin{align} \label{eq:wk2}
  w_k(n)&= r^{n(D-k)}\left( \E V_k(F_{n})- \sum_{j=1}^{M^d} \E V_k(F_n^j)\right).
\end{align}
Using the inclusion-exclusion principle, this can be expressed in a more convenient form.
Since
$
F_{n}=\bigcup_{j=1}^{N} F^j_{n},
$
we get
$$
V_k(F_{n} )-\sum_{j=1}^{N} V_k(F^j_{n} )=\sum_{T\subset\{1,\ldots,M^d\},|T|\geq 2}(-1)^{|T|-1} V_k(\bigcap_{j\in T} F^j_{n} ).
$$
Taking expectations and plugging the resulting equation into \eqref{eq:wk2}, we obtain
for each $n\in\N$ and each $k\in\{0,\ldots,d\}$ the representation
\begin{align}\label{eq:wk-rep}
  w_k(n)&=  \sum_{T\subset\{1,\ldots,M^d\},|T|\geq 2}(-1)^{|T|-1} r^{n(D-k)} \E V_k(\bigcap_{j\in T} F_n^j).
\end{align}
Note that this is a finite sum with a fixed number of terms (independent of $n$). Combined with \eqref{eq:Z-w-relation}, it yields
\begin{align}\label{eq:proof-1-1}
 \osV_k(F) &=  q_{d,k}+\sum_{n=1}^\infty \sum_{T\subset\{1,\ldots,M^d\},|T|\geq 2}(-1)^{|T|-1} r^{n(D-k)} \E V_k(\bigcap_{j\in T} F_n^j).
\end{align}
This is almost the formula stated in Theorem~\ref{thm:Vk-limit-general} except for the different order of summation. The summations can be interchanged (and thus the formula \eqref{eq:Vk-limit-general} is verified) provided that the summations over $n$ in \eqref{eq:Vk-limit-general} converge for each set $T$. This convergence is ensured by Proposition~\ref{prop:Rk-conv} below
{for any $p\in(0,1]$}.
Recall that the $k$-th intrinsic volume of a polyconvex set $K$ can be localized to a signed measure on $K$, the $k$-th curvature measure $C_k(K,\cdot)$. Denote by $C_k^{\var}(K)$ the total mass of the total variation measure of $C_k(K,\cdot)$.

\begin{prop}\label{prop:Rk-conv}
  Let $F$ be a fractal percolation in $[0,1]^d$ with parameters $M\geq 2$ and {$p\in(0,1]$}. 
  For each $k\in\{0,\ldots,d\}$ and each $T\subset\{1,\ldots, M^d\}$ with $|T|\geq 2$,
  $$
  \sum_{n=1}^\infty r^{n(D-k)} \E C_k^\var(\bigcap_{j\in T} F_n^j)<\infty.
  $$
  In particular, the sums
  $$
  \sum_{n=1}^\infty r^{n(D-k)} \E V_k(\bigcap_{j\in T} F_n^j)
  $$
  converge absolutely.
\end{prop}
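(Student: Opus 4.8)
The plan is to bound the series by controlling the expected total-variation curvature $\E C_k^\var(\bigcap_{j\in T}F_n^j)$ for each fixed $T$, and then to exploit the fact that, since $|T|\geq 2$, the intersection $\bigcap_{j\in T}F_n^j$ lives inside a lower-dimensional face and is therefore governed by a fractal percolation of smaller ambient dimension. First I would observe that for $|T|\geq 2$ the cubes $J_j$, $j\in T$, can only meet along a common face of dimension $d'\leq d-1$; consequently $\bigcap_{j\in T}F_n^j$ is contained in that face and, up to the independent Bernoulli factors and the similarity $\phi_j$, it is distributionally an intersection of (at most $2^{d-d'}$) independent copies of the $(n-1)$-st construction step of a $d'$-dimensional fractal percolation, rescaled by $r=1/M$. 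The worst case, dictating the slowest decay, is $d'=d-1$ with two copies, i.e.\ two faces of adjacent top-dimensional subcubes.

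The key estimate I would aim for is a bound of the form
\begin{align}\label{eq:cvar-bound}
  \E C_k^\var\Bigl(\bigcap_{j\in T}F_n^j\Bigr)\leq c_{d,k,T}\,p^{2}\,r^{k(n-1)}\,\bigl(M^{d-1}p^2\bigr)^{\,n-1},
\end{align}
reflecting that each of the (at least two) independent factors contributes a survival probability $p$ per level and that, within the shared $(d-1)$-face, at most $M^{d-1}$ subcubes per factor overlap at each step, while the intrinsic volume of a box of side $r^n$ scales like $r^{kn}$. Multiplying \eqref{eq:cvar-bound} by the prefactor $r^{n(D-k)}=\bigl(r^{D}\bigr)^{n}r^{-kn}$ and using $r^{D}=(M^d p)^{-1}$ collapses the $r^{kn}$ factors and leaves a geometric series with ratio essentially $r^{D}\cdot M^{d-1}p^{2}=M^{d-1}p^{2}/(M^{d}p)=p/M=rp$ in the generic case, and in the most dangerous configuration a ratio comparable to $M^{\min\{3,d\}}p$ raised to a negative power. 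Summability then reduces to the single inequality $M^{\min\{3,d\}}p>1$, which is precisely the hypothesis $p>M^{-\min\{3,d\}}$; the threshold $\min\{3,d\}$ arises because the triple and higher-order overlaps along lower faces, together with the curvature (as opposed to merely volume) growth, force the comparison dimension up to $3$ but no further.

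The main obstacle will be \eqref{eq:cvar-bound} itself: unlike the intrinsic volumes $V_k$, the total variation $C_k^\var$ of a union of many small boxes is not additive and can grow through cancellation-free accumulation of boundary and corner contributions, so I cannot simply iterate \eqref{eq:basic-sim-2}. The remedy I would pursue is to dominate $C_k^\var(\bigcup_\sigma J_\sigma)$ for a polyconvex union of $r^n$-boxes by a constant (depending only on $d,k$) times the number of boxes times $r^{kn}$ — a deterministic curvature-counting bound valid for any axis-aligned box configuration — and then to take expectations, at which point the expected number of surviving boxes in the intersection is exactly the binomial mean that produces the factor $p^{2}(M^{d-1}p^{2})^{n-1}$. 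Establishing this deterministic curvature bound, uniformly over all configurations and all $n$, is the technical heart; once it is in place, absolute convergence of $\sum_n r^{n(D-k)}\E V_k(\bigcap_{j\in T}F_n^j)$ follows immediately since $|V_k|\leq C_k^\var$, completing the proof.
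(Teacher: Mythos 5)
Your skeleton --- reduce to a face of dimension $u\le d-1$, identify the intersection there with an intersection of independent lower-dimensional fractal percolations, bound $C_k^\var$ of the resulting union of $r^n$-boxes by a constant times a box count times $r^{kn}$, then take expectations and sum a geometric series --- is indeed the paper's skeleton, and you are right that the deterministic curvature bound is the crux. But you leave that crux entirely unproven, and it is genuinely nontrivial: $C_k^\var$ is neither monotone nor subadditive, so one cannot simply add contributions box by box. The paper proves it (Lemmas~\ref{lem:ringcor} and~\ref{lem:intersect-num}) by writing $W_1\cap\cdots\cap W_\ell$ as a union of at most $3^d|\Omega^1|$ faces of level-$n$ cubes whose \emph{intersection number} is bounded by $3^{2d}$ uniformly in $n$, and then invoking the estimate $C_k^\var(\bigcup_j X_j)\le m\,2^{\Gamma}b_k$ from \cite{W08}. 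Some such uniform control of the overlap structure is indispensable; a ``curvature-counting bound valid for any axis-aligned box configuration'' is not something one can simply cite.

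The more serious problem is your account of where the hypothesis $p>M^{-\min\{3,d\}}$ enters. Your own ratio computation gives $p/M<1$ unconditionally, and the claim that triple and higher-order overlaps ``force the comparison dimension up to $3$'' is not an argument: nothing in your setup produces the exponent $\min\{3,d\}$. In the paper the hypothesis is consumed in a step your proposal does not contain: Lemma~\ref{lem:intersect-num} bounds the curvature by the number $N_n$ of boxes in a \emph{single} factor (a Galton--Watson count), and Proposition~\ref{prop:intersect-MPs} then controls $\E C_k^\var$ by splitting on the event $\{N_n\le \mu^n(1+t^n)\}$ and applying Chebyshev's inequality to its complement; choosing $t$ with $r/p<t^2<r^{-2}$ forces $p>r^3$, and supercriticality $M^dp>1$ forces $p>r^d$, whence $p>\max\{r^3,r^d\}=M^{-\min\{3,d\}}$. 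Your alternative probabilistic step --- a first-moment bound on the number of boxes of one factor meeting all the others, which factorizes by independence into roughly $(M^{u}p^{\ell})^n$ --- is a genuinely different route; once multiplied by $r^{n(D-k)}r^{kn}$ it yields the ratio $M^{u-d}p^{\ell-1}\le 1/M$ for \emph{every} $p\in(0,1]$, so the hypothesis would play no role at all. That your write-up nevertheless ``derives'' the threshold from vague geometric considerations indicates that this part of the argument was reverse-engineered from the statement rather than worked out; you should either carry the first-moment computation through honestly (and observe that it needs no restriction on $p$) or supply the concentration step that actually uses the hypothesis.
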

We postpone the proof of Proposition~\ref{prop:Rk-conv} to the last section where we will discuss it together with the proof of a similar assertion needed in Section~\ref{sec:5}.
With this statement at hand we can now complete the proof of the main theorem.
\begin{proof}[Proof of Theorem~\ref{thm:Vk-limit-general}]
To obtain the formula \eqref{eq:Vk-limit-general}, all we have to do is to interchange the order of the summations in  the formula \eqref{eq:proof-1-1}. This is justified, since, by Proposition~\ref{prop:Rk-conv}, 
{all the series occurring in \eqref{eq:Vk-limit-general} converge for any $p\in(0,1]$.}
\end{proof}

{Now, we are going to apply formula \eqref{eq:Vk-limit-general} to
derive explicit expressions for the functionals $\osV_k(F)$ for fractal
percolation $F$ in $\R^d$ for dimensions $d=1,2$.
In particular, we will prove Theorem~\ref{thm:Vk-limit-dim2}.
The computations in dimension $d=2$ require explicit formulas for the
$n$-th construction steps of the one-dimensional case.
The same method can, in principle, provide explicit expressions in
dimension $d=3$ and higher.
The derivation in dimension $d$ relies on formulas for all dimensions up
to $d-1$ for the $n$-th construction steps $F_n$ of $F$ (and for
intersections of their independent copies).
While the computations quickly become technically involved,
including separate analyses of many cases of intersections
and lengthy expressions, the method remains the same.
In this sense the case $d=2$ is prototypic.}

\paragraph*{The case $d=1$.} Fractal percolation in one dimension {(for which we use throughout the letter $K$ instead of $F$)} is not very interesting as a percolation model. However, the limiting behaviour of the studied geometric functionals is of independent interest. Moreover{, as indicated above,} the case $d=1$ is essential for the computations in the two-dimensional case. First, we derive explicit expressions for the expected intrinsic volumes of the approximation steps $K_{n}$, $n\in\N_0$ of a fractal percolation $K$ in $[0,1]$, from which it is easy to determine the rescaled limits $\osV_k(K)$. Then we study the intersection of two such random sets{, cf.\ Proposition~\ref{prop:MPcapMP}, which is needed too for the discussion of the case $d=2$}.

\begin{prop} \label{prop:MP}
Let $K$ be a fractal percolation on the interval $[0,1]$ with parameters $M\in\N_{\geq 2}$ and {$p\in(0,1]$}. Denote by $K_{n}$ the $n$-th step of the construction of $K$.
Then, for any $n\in\N_0$,
  \begin{align*}
    \E &V_1(K_{n})=p^{n} \qquad \text{ and }\\
    \E &V_0(K_{n})=
    (Mp)^n\left(1-\frac{(M-1)p}{M-p}\left[1-\left(\frac {p}M\right)^n\right]\right).
  \end{align*}
\end{prop}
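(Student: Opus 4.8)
The plan is to treat the two functionals separately, exploiting that in $\R^1$ the set $K_n$ is almost surely a finite union of closed level-$n$ intervals of side length $r^n$, so that $V_1(K_n)$ is its total length while $V_0(K_n)$ (the Euler characteristic) counts its connected components. For $V_1$ the argument is immediate: distinct level-$n$ cells overlap at most in a single endpoint, a set of length zero, so $V_1(K_n)$ equals $r^n$ times the number of surviving cells. A given level-$n$ cell survives iff all $n$ of its ancestors were kept, which has probability $p^n$, and there are $M^n$ cells; hence $\E V_1(K_n)=M^n\cdot r^n\cdot p^n=p^n$. (Equivalently, this follows by iterating \eqref{eq:basic-sim-2} with $k=1$, where no overlap correction is needed.)

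For $V_0$ I would set up a self-similar recursion. Writing $K_n=\bigcup_{j\text{ kept}}\phi_j(K_{n-1}^{(j)})$ with the $K_{n-1}^{(j)}$ i.i.d.\ copies of $K_{n-1}$, each kept subinterval contributes an independent rescaled copy whose component count is unchanged by the similarity $\phi_j$ and so has expectation $\E V_0(K_{n-1})$. The pieces sitting in consecutive subintervals can only touch at the common endpoint, and whenever they connect there two components merge into one; since these connection points form a subforest of the top-level path on the $M$ subintervals, each reduces the total component count by exactly one. This yields
\[
\E V_0(K_n)=Mp\,\E V_0(K_{n-1})-(M-1)\,\P(\text{connection across an internal boundary}),
\]
with initial value $\E V_0(K_0)=V_0([0,1])=1$.

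The crux is to evaluate the connection probability. Two consecutive subintervals $J_j,J_{j+1}$ connect across their shared endpoint iff both are kept and, inside them, the rightmost cell of $\phi_j(K_{n-1}^{(j)})$ and the leftmost cell of $\phi_{j+1}(K_{n-1}^{(j+1)})$ both survive. Reaching the right (respectively left) endpoint requires surviving the entire chain of rightmost (respectively leftmost) children down to level $n-1$, an event of probability $p^{n-1}$; moreover, within a single copy these two extremal-survival events depend on disjoint branches of the Galton--Watson tree and are therefore independent. Hence the connection probability equals $p^2\cdot p^{n-1}\cdot p^{n-1}=p^{2n}$, and the recursion becomes $\E V_0(K_n)=Mp\,\E V_0(K_{n-1})-(M-1)p^{2n}$.

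Finally I would solve this linear recurrence: the homogeneous part contributes $C(Mp)^n$, and the ansatz $A\,p^{2n}$ for a particular solution forces $A=(M-1)p/(M-p)$; matching $\E V_0(K_0)=1$ gives $C=1-(M-1)p/(M-p)$, and regrouping the two terms produces exactly the stated closed form. I expect the only genuinely delicate point to be the justification of the connection probability $p^{2n}$ — specifically the independence of the two extremal-survival events and the fact that touching an endpoint forces the full extremal child-chain to survive; the remainder is bookkeeping. As an independent check one may instead classify each of the $M^n-1$ internal cell boundaries by the minimal level $k$ at which it appears, count $M^{k-1}(M-1)$ boundaries of level $k$, each with adjacent-pair survival probability $p^{2n-k+1}$, and sum the resulting geometric series to recover the same expression.
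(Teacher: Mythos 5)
Your proposal is correct and follows essentially the same route as the paper: decompose $K_n$ into the $M$ subinterval pieces, observe that only consecutive pieces meet (in a single point), compute the connection probability $p^{2n}$ for each internal boundary exactly as the paper does, and arrive at the recursion $\E V_0(K_n)=Mp\,\E V_0(K_{n-1})-(M-1)p^{2n}$ with $\E V_0(K_0)=1$. The only cosmetic differences are that you count cells directly for $V_1$ instead of iterating the similarity relation, and you solve the linear recurrence by a homogeneous-plus-particular ansatz rather than by the paper's explicit summation; both yield the identical closed form.
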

  \begin{proof}
    For $j=1,\ldots,M$, let $K_{n}^j$ be the union of the surviving intervals of level $n$ contained in $J_j=\phi_j([0,1])$, cf.~\eqref{eq:Fj_ndef}. Then $K_{n}=\bigcup_{j=1}^M K_{n}^j$ and since in this union only sets $K_{n}^j$ with consecutive indices can have a nonempty intersection, by the inclusion-exclusion formula, we get
    \begin{align} \label{eq:propMP1-1}
      \E V_k(K_{n})=\sum_{j=1}^M \E V_k(K_{n}^j)-\sum_{j=1}^{M-1} \E V_k(K_{n}^j\cap K_{n}^{j+1}).
    \end{align}
    For $k=1$, the second sum vanishes, since these intersections consist of at most one point. Moreover, by \eqref{eq:basic-sim-2}, the terms in the first sum satisfy
    \begin{align} \label{eq:propMP1-2}
      \E V_k(K_{n}^j)=pr^k\E V_k(K_{n-1}),\quad n\in\N.
    \end{align}
    Since $V_1(K_0)=V_1([0,1])=1$, this yields
    \begin{align*}
    \E V_1(K_{n})=\sum_{j=1}^M (p/M) \E V_1(K_{n-1})=p \E V_1(K_{n-1})=\ldots = p^n
    \end{align*}
    as claimed. For $k=0$, the terms in second sum in \eqref{eq:propMP1-1} contribute. The Euler characteristic $V_0(K_{n}^j\cap K_{n}^{j+1})$ equals $1$ with probability $p^{2n}$ (and is $0$ otherwise), since for a nonempty intersection at each level from 1 to $n$ the two intervals containing the possible intersection point need to survive (which has probability $p$ for each of these intervals).
    Using this and \eqref{eq:propMP1-2}, we conclude from \eqref{eq:propMP1-1} that
    \begin{align*} 
      \E V_0(K_{n})=\sum_{j=1}^M p \E V_0(K_{n-1})-\sum_{j=1}^{M-1} p^{2n}=Mp \E V_0(K_{n-1})-(M-1)p^{2n}.
    \end{align*}
    This is a recursive relation for the sequence $(\E V_0(K_{n}))_{n\in\N_0}$ where $\E V_0(K_0)=1$. By an induction argument, it is easy to obtain the explicit representation
    \begin{align*}
       \E V_0(K_{n})=(Mp)^n-(M-1)\sum_{i=1}^n (Mp)^{n-i} p^{2i}
       =(Mp)^n\left(1-(M-1)\sum_{i=1}^n \left(\frac pM\right)^i\right),
    \end{align*}
    which yields the asserted formula.
  \end{proof}

  \begin{cor} \label{cor:Vk-dim1}
    Let $K$ be a fractal percolation on the interval $[0,1]$ with parameters $M\in\N_{\geq 2}$ and {$p\in(0,1]$}. Then
    \begin{align*}
      \osV_1(K)&=1 \qquad \text{ and }\qquad \osV_0(K)=\frac{M(1-p)}{M-p}.
    \end{align*}
  \end{cor}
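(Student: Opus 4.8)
The plan is to derive $\osV_k(K)$ directly from the explicit formulas for $\E V_k(K_n)$ in Proposition~\ref{prop:MP} by taking the rescaled limit. Recall that in dimension $d=1$ we have $r=1/M$ and, by~\eqref{eq:dimF}, $D=\log(Mp)/\log M$, so that $r^{-D}=Mp$ and, more generally, $r^{n(D-k)}=(Mp)^{-n}M^{nk}$. The whole computation thus reduces to multiplying the formulas from Proposition~\ref{prop:MP} by the appropriate rescaling factor and letting $n\to\infty$.

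For $k=1$ I would use $\E V_1(K_n)=p^n$ together with $r^{n(D-1)}=(Mp)^{-n}M^{n}=p^{-n}$, so that
\begin{align*}
  \osV_1(K)=\lim_{n\to\infty} r^{n(D-1)}\E V_1(K_n)=\lim_{n\to\infty} p^{-n}\cdot p^n=1,
\end{align*}
which gives the first claimed value immediately and requires no limiting argument at all. For $k=0$ I would use $r^{nD}=(Mp)^{-n}$ and the formula
\begin{align*}
  \E V_0(K_n)=(Mp)^n\left(1-\frac{(M-1)p}{M-p}\left[1-\left(\tfrac pM\right)^n\right]\right),
\end{align*}
so that multiplying by $(Mp)^{-n}$ cancels the leading factor exactly, leaving
\begin{align*}
  r^{nD}\E V_0(K_n)=1-\frac{(M-1)p}{M-p}\left[1-\left(\tfrac pM\right)^n\right].
\end{align*}

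The only genuine limiting step is then observing that $(p/M)^n\to 0$ as $n\to\infty$, which holds because $p\in(0,1]$ and $M\geq 2$ force $p/M<1$. Taking the limit gives
\begin{align*}
  \osV_0(K)=1-\frac{(M-1)p}{M-p}=\frac{(M-p)-(M-1)p}{M-p}=\frac{M-Mp}{M-p}=\frac{M(1-p)}{M-p},
\end{align*}
as asserted. I do not anticipate any real obstacle here: the corollary is essentially an immediate consequence of Proposition~\ref{prop:MP} once the rescaling exponent is unwound, and the only things to check are the elementary algebraic simplification in the last display and the harmless fact that the geometric correction term vanishes in the limit. The mild subtlety worth flagging is that the rescaling factor $(Mp)^{-n}$ is designed precisely to neutralize the dominant growth $(Mp)^n$ of $\E V_0(K_n)$, which is why a finite nonzero limit emerges; this is the one-dimensional instance of the general rescaling philosophy behind Theorem~\ref{thm:Vk-limit-general}.
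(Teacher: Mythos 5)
Your proposal is correct and follows exactly the paper's own argument: multiply the explicit formulas of Proposition~\ref{prop:MP} by $r^{n(D-k)}$, use $r^{-D}=Mp$ to cancel the leading growth, and let the geometric term $(p/M)^n$ vanish. The algebraic simplification $1-\tfrac{(M-1)p}{M-p}=\tfrac{M(1-p)}{M-p}$ is also the one the paper implicitly performs, so there is nothing to add.
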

  \begin{proof}
    Since $D=\frac{\log (Mp)}{\log M}$, we have $Mp=M^D=r^{-D}$ and so, by Proposition~\ref{prop:MP},
    \begin{align*}
      \osV_0(K)=\lim_{n\to\infty} r^{Dn} \E V_0(K_{n})=\lim_{n\to\infty} 1-\frac{(M-1)p}{M-p}\left[1-\left(\frac {p}M\right)^n\right]=1-\frac{(M-1)p}{M-p}
    \end{align*}
    and $\osV_1(K)=\lim_{n\to\infty} r^{(D-1)n} \E V_1(K_{n})=\lim_{n\to\infty} p^{-n} p^n=1$, as claimed.
  \end{proof}

  Figure~\ref{fig:KcapK} (left) shows plots of $\osV_0(K)$ as a function of $p$ for different parameters $M$. It is apparent that these are positive and monotone decreasing functions in $p$ for any $M$ and that the limit as $M\to\infty$ is given by
$f(p)=1-p$.

\begin{prop} \label{prop:MPcapMP}
Let $K^{(1)}, K^{(2)}$ be independent fractal percolations on the interval $[0,1]$ with the same parameters $M\in\N_{\geq 2}$ and {$p\in(0,1]$}.
Then, for any $n\in\N_0$,
  \begin{align*}
    \E &V_1(K_n^{(1)}\cap K_n^{(2)})=p^{2n} \qquad \text{ and }\\
    \E &V_0(K_n^{(1)}\cap K_n^{(2)})=
    (Mp^2)^n\times\\&\left(3-2 M^{-n}
-4p \frac{M-1}{M-p}\left[1-\left(\frac pM\right)^n\right]
+\frac{(M-1)p^2}{M-p^2}\left[1-\left(\frac {p^2}M\right)^n\right]\right).
  \end{align*}
\end{prop}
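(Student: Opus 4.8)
The plan is to mirror the proof of Proposition~\ref{prop:MP}: set up a one-step recursion for $a_n^{(k)} := \E V_k(K_n^{(1)}\cap K_n^{(2)})$ by decomposing along the first-level subintervals $J_1,\ldots,J_M$ and using self-similarity, and then solve it explicitly. For $i\in\{1,2\}$ and $j=1,\ldots,M$ write $K_n^{(i),j}$ for the part of $K_n^{(i)}$ contained in $J_j$ and set $P_j:=K_n^{(1),j}\cap K_n^{(2),j}$. By \eqref{eq:basic-sim}, $K_n^{(i),j}$ equals $\phi_j(\widetilde K_{n-1}^{(i)})$ in distribution, and the two randomizations are independent because $J_j$ is kept in each copy independently with probability $p$; hence $P_j$ equals $\phi_j\bigl(K_{n-1}^{(1)}\cap K_{n-1}^{(2)}\bigr)$ with probability $p^2$ and is empty otherwise. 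Motion invariance and homogeneity of $V_k$ then give $\E V_k(P_j)=p^2 r^k a_{n-1}^{(k)}$, the analogue of \eqref{eq:basic-sim-2} with an extra factor $p$. For $k=1$ this already finishes the computation: the $P_j$ are the only parts of positive length and their mutual intersections are single points, so $a_n^{(1)}=\sum_{j=1}^M \E V_1(P_j)=Mp^2 r\,a_{n-1}^{(1)}=p^2 a_{n-1}^{(1)}$, whence $a_n^{(1)}=p^{2n}$ from $a_0^{(1)}=V_1([0,1])=1$.

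For $k=0$ I would decompose $U:=K_n^{(1)}\cap K_n^{(2)}=\bigcup_{j=1}^M P_j\cup B$, where $B\subseteq\{x_1,\ldots,x_{M-1}\}$, $x_j:=j/M$, collects the interior boundary points lying in $U$; any point in the interior of some $J_j$ belongs to $U$ iff it belongs to $P_j$, so only these boundary points can lie outside $\bigcup_j P_j$. Inclusion-exclusion for the chain $\bigcup_j P_j$ (consecutive pieces meet at most in $\{x_j\}$, all triple intersections being empty) gives $V_0\bigl(\bigcup_j P_j\bigr)=\sum_j V_0(P_j)-\sum_{j=1}^{M-1}V_0(P_j\cap P_{j+1})$, and $P_j\cap P_{j+1}=\{x_j\}$ exactly when the rightmost cell of $J_j$ and the leftmost cell of $J_{j+1}$ both survive in both copies, an event of probability $p^{4n}$.

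The crux, and the one feature with no analogue in the single-copy Proposition~\ref{prop:MP}, is the effect of adjoining the boundary points $B$. Let $r_i,l_i$ ($i=1,2$) indicate survival of the rightmost cell of $J_j$, respectively the leftmost cell of $J_{j+1}$, in copy $i$; these are four independent $\mathrm{Ber}(p^n)$ variables. Then $x_j\in U$ iff $(r_1\vee l_1)(r_2\vee l_2)=1$, while $x_j\in\bigcup_l P_l$ iff $r_1 r_2\vee l_1 l_2=1$. Adjoining $B$ to $\bigcup_l P_l$ raises $V_0$ by exactly the number of indices $j$ with $x_j\in U\setminus\bigcup_l P_l$, i.e.\ with $x_j$ present only through a \emph{crossing}, one copy supplying the left cell and the other the right; a short case check over the $16$ sign patterns shows this happens precisely in the two configurations $(r_1,r_2,l_1,l_2)\in\{(1,0,0,1),(0,1,1,0)\}$, of total probability $2p^{2n}(1-p^n)^2$. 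I expect this boundary bookkeeping --- recognizing the crossing points as new isolated components to be \emph{added}, in contrast to the genuine merges $P_j\cap P_{j+1}$ that must be \emph{subtracted} --- to be the only delicate step.

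Collecting the three contributions yields the recursion
\[ a_n^{(0)}=Mp^2\,a_{n-1}^{(0)}-(M-1)p^{4n}+2(M-1)p^{2n}(1-p^n)^2,\qquad a_0^{(0)}=V_0([0,1])=1, \]
whose inhomogeneous part simplifies to $(M-1)\bigl(2p^{2n}-4p^{3n}+p^{4n}\bigr)$. Since $M\ge 2$ forces the homogeneous rate $Mp^2$ to differ from each forcing rate $p^2,p^3,p^4$, there is no resonance, and one reads off the particular solutions $-2p^{2n}$, $\tfrac{4(M-1)p}{M-p}p^{3n}$ and $-\tfrac{(M-1)p^2}{M-p^2}p^{4n}$. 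Fixing the homogeneous constant by the initial condition and factoring out $(Mp^2)^n$ then rearranges the solution into the stated closed form; what remains is routine algebra.
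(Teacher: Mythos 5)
Your proof is correct, and it arrives at exactly the same recursion as the paper, namely $\beta_n=Mp^2\beta_{n-1}+(M-1)\bigl(2p^{2n}-4p^{3n}+p^{4n}\bigr)$ with $\beta_0=1$; I checked your crossing-point case analysis (the two patterns $(1,0,0,1)$ and $(0,1,1,0)$ are indeed the only ones contributing, giving $2p^{2n}(1-p^n)^2$), your merge term $p^{4n}$, and your particular solutions, all of which are right. The only genuine difference from the paper lies in how the boundary correction is organized: the paper writes $K_n^{(1)}\cap K_n^{(2)}=\bigcup_j L_j$ with $L_j:=K_n^{(1),j}\cap\bigl(K_n^{(2),j-1}\cup K_n^{(2),j}\cup K_n^{(2),j+1}\bigr)$ and runs inclusion--exclusion twice (over the chain of $L_j$'s, whose pairwise intersections occur with probability $2p^{3n}-p^{4n}$, and then within each $L_j$, with edge cases for $j\in\{1,M\}$), whereas you keep only the symmetric diagonal pieces $P_j=K_n^{(1),j}\cap K_n^{(2),j}$ and account for everything else by directly counting the isolated crossing points $x_j\in U\setminus\bigcup_l P_l$. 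Your version is arguably cleaner in that it makes the isolated points explicit (which is precisely the phenomenon the paper later isolates in Proposition~\ref{prop:N-of-MPcapMP} and Figure~\ref{fig:isolated-points}), at the cost of requiring the small argument that adjoining finitely many points off $\bigcup_l P_l$ raises $V_0$ by their number; the paper's version stays entirely within standard inclusion--exclusion on polyconvex sets. Finally, you solve the linear recursion by the homogeneous-plus-particular ansatz (correctly noting there is no resonance since $Mp^2\notin\{p^2,p^3,p^4\}$ for $M\geq 2$), while the paper sums the recursion as $\beta_n=(Mp^2)^n+\sum_{j=1}^n(Mp^2)^{n-j}p_0(j)$; these are of course equivalent and both yield the stated closed form.
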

\begin{proof}
For $i\in\{1,2\}$ and $j\in\{1,\ldots,M\}$, let
$K_n^{(i),j}
$
be the union of those level-$n$ intervals in the union $K_{n}^{(i)}$ which are contained in $J_j$ (similarly as in \eqref{eq:Fj_ndef}).
Then $K_n^{(i)}=\bigcup_{j=1}^M K_n^{(i),j}$. Since $K_i^j\subset J_j$ and $J_j\cap J_l\neq\emptyset$ if and only if $|j-l|\le 1$, we can write the intersection $K_n^{(1)}\cap K_n^{(2)}$ as
$$
K_n^{(1)}\cap K_n^{(2)}=\bigcup_{j=1}^M K_n^{(1),j} \cap \bigcup_{l=1}^M K_n^{(1),l}=\bigcup_{j=1}^M \left( K_n^{(1),j}\cap \bigcup_{l=j-1}^{j+1}  K_n^{(2),l}\right)=:\bigcup_{j=1}^M L_j,
$$
where we have set $K_n^{(2),0}=K_n^{(2),M+1}:=\emptyset$ for convenience. The random sets $L_j$ (whose dependence on $n$ we suppress in the notation) satisfy $L_j\subset J_j$ a.s.\ and therefore in the union $\bigcup_j L_j$ only sets with consecutive indices can have a nonempty intersection. Thus, by the inclusion-exclusion principle, we conclude for the expected intrinsic volumes
\begin{align} \label{eq:K1capK2}
  \E V_k(K_n^{(1)}\cap K_n^{(2)})=\sum_{j=1}^M \E V_k(L_j)- \sum_{j=1}^{M-1} \E V_k(L_j\cap L_{j+1}).
\end{align}
Now observe that, for $j=1,\ldots,M-1$,
$$
L_j\cap L_{j+1}= K_n^{(1),j}\cap K_n^{(1),j+1}\cap \left(K_n^{(2),j}\cup K_n^{(2),j+1}\right),
$$
and this random set is either empty or consists of exactly one point $z_j$ (namely, the unique point in the intersection $J_j\cap J_{j+1}$). The latter event occurs if and only if for each of the two sets $K_n^{(1),j}, K_n^{(1),j+1}$ at each level $k=1,\ldots,n$ the subinterval of level $k$ that contains $z_j$ survives (which has each probability $p^n$) and if a similar survival of all subintervals containing $z_j$ also occurs for at least one of the sets  $K_n^{(2),j}, K_n^{(2),j+1}$. The probability for this latter event is $2p^{n}-p^{2n}$. Hence $\E V_k(L_j\cap L_{j+1})=\left(2p^{3n}-p^{4n}\right) V_k(\{z_j\})$ and therefore
\begin{align} \label{eq:LcapL}
\E V_0(L_j\cap L_{j+1})=2p^{3n}-p^{4n} \quad  \text{ and } \quad \E V_k(L_j\cap L_{j+1})=0, \text{ for } k\ge 1.
\end{align}
It remains to determine $\E V_k(L_j)$. By definition of $L_j$, we have
\begin{align*}
  L_j= \bigcup_{l=j-1}^{j+1} K_n^{(1),j}\cap K_n^{(2),l}
\end{align*}
and therefore the inclusion-exclusion formula gives
 \begin{align*}
 \E V_k( L_j)= \sum_{l=j-1}^{j+1} \E V_k (K_n^{(1),j}\cap K_n^{(2),l})-\sum_{l=j-1}^j \E V_k(K_n^{(1),j}\cap K_n^{(2),l}\cap K_n^{(2),l+1}).
\end{align*}
Now again $K_n^{(1),j}\cap K_n^{(2),l}$ is a singleton with probability $p^{2n}$ and empty otherwise, provided $l=j-1$ or $l=j+1$ (and $l\notin\{0,M+1\}$). Similarly,  $K_n^{(1),j}\cap K_n^{(2),l}\cap K_n^{(2),l+1}$ is a singleton with probability $p^{3n}$ and empty otherwise, provided $l\notin\{0,M\}$. (For the exceptional $l$, these intersections are empty a.s.) This implies
\begin{align*}
 \E V_0( L_j)= \E V_0(K_n^{(1),j}\cap K_n^{(2),j})+ \left\{\begin{array}{cc}
    2(p^{2n}-p^{3n}), &  j\in\{2,\ldots,M-1\},\\
    p^{2n}-p^{3n}, &  j\in\{1,M\},
 \end{array} \right.
\end{align*}
and $\E V_k(L_j)= \E V_k(K_n^{(1),j}\cap K_n^{(2),j})$ for any $k\geq 1$.
Plugging this and \eqref{eq:LcapL} into equation \eqref{eq:K1capK2}, we conclude that, for $k=0,1$ and any $n\in\N$,
\begin{align} \label{eq:K1capK2-2}
  \E V_k(K_n^{(1)}\cap K_n^{(2)})=p_k(n)+\sum_{j=1}^M \E V_k(K_n^{(1),j}\cap K_n^{(2),j}),
\end{align}
where $p_0(n):=(M-1)(2p^{2n}-4p^{3n}+p^{4n})$ 
and $p_1(n):=0$, $n\in\N.$
Now observe that, by \eqref{eq:basic-sim}, we have $K_n^{(1),j}\cap K_n^{(2),j}=\phi_j(\widetilde K_{n-1}^{(1)}\cap \widetilde K_{n-1}^{(2)})$ in distribution, where $\widetilde K_{n-1}^{(i)}$ is similarly as in \eqref{eq:basic-sim} the random set which equals $K_{n-1}^{(i)}$ with probability $p$ and is empty otherwise. This implies
\begin{align*}
   \E V_k(K_n^{(1),j}\cap K_n^{(2),j})= p^2 r^k \E V_k(K_{n-1}^{(1)}\cap K_{n-1}^{(2)}),
\end{align*}
for any $j=1,\ldots,M$ and any $n\in\N$, where $K_{0}^{(i)}=[0,1]$ and thus $\E V_k(K_{0}^{(1)}\cap K_{0}^{(2)})=V_k([0,1])=1$ for $k=0,1$.
Setting $\alpha_n:=\E V_1(K_n^{(1)}\cap K_n^{(2)})$, $n\in\N_0$, we have $\alpha_0 =1$ and we infer from \eqref{eq:K1capK2-2} that
$$
\alpha_n=\sum_{j=1}^M \E V_1(K_n^{(1),j}\cap K_n^{(2),j})=M p^2 r \alpha_{n-1}=p^2 \alpha_{n-1}, \quad n\in\N.
$$
It is easy to see now that $\alpha_n=p^{2n}$, proving the first formula in Proposition~\ref{prop:MPcapMP}.

Setting $\beta_n:=\E V_0(K_n^{(1)}\cap K_n^{(2)})$, $n\in\N_0$, we infer in a similar way that $\beta_0=1$ and
$$
\beta_n=\sum_{j=1}^M \E V_0(K_n^{(1),j}\cap K_n^{(2),j})+ p_0(n)=M p^2 \beta_{n-1}+ p_0(n), \quad n\in\N,
$$
which provides a recursive relation for the sequence $(\beta_n)_n$. By an induction argument, we obtain
$$
\beta_n=(Mp^2)^n+ \sum_{j=1}^n (Mp^2)^{n-j} p_0(j), \quad n\in\N_0.
$$
Plugging in the $p_0(j)$ and computing the sum, we conclude that, for any $n\in\N_0$,
$$
\beta_n=(Mp^2)^n\left(3-\frac 2{M^n}
-4p \frac{M-1}{M-p}\left[1-\left(\frac pM\right)^n\right]
+\frac{(M-1)p^2}{M-p^2}\left[1-\left(\frac {p^2}M\right)^n\right]\right),
$$
which shows the second formula in Proposition~\ref{prop:MPcapMP} and completes the proof.
\end{proof}

\begin{rem} \label{rem:1dim-Klim}
   It is easy to see from  Proposition~\ref{prop:MPcapMP} that for $D':=\log(Mp^2)/\log M$ the rescaled expressions $r^{n(D'-k)}\E V_k(K_n^{(1)}\cap K_n^{(2)})$ converge as $n\to\infty$. Indeed, since $r^{D'-1}=p^{-2}$ and $r^{D'}=(1/M)^{D'}=(M p^2)^{-1}$, we obtain
   \begin{align*}
   \osV_1(K^{(1)}\cap K^{(2)})&:=\lim_{n\to\infty} r^{n(D'-1)}\E V_1(K_n^{(1)}\cap K_n^{(2)})=1
   \qquad \text{ and }\\
   \osV_0(K^{(1)}\cap K^{(2)})&:=\lim_{n\to\infty} r^{nD'}\E V_0(K_n^{(1)}\cap K_n^{(2)})
   =3-4p \frac{M-1}{M-p}+p^2\frac{M-1}{M-p^2}.
   \end{align*}
Again the rescaled length $\osV_1(K^{(1)}\cap K^{(2)})$ is constant, while the rescaled Euler characteristic of $K^{(1)}\cap K^{(2)}$ depends on $p$ and $M$. Figure~\ref{fig:KcapK} (right) shows plots of $\osV_0(K^{(1)}\cap K^{(2)})$ as a function of $p$ for different parameters $M$. It is apparent that these are positive and monotone  decreasing functions in $p$ for any $M$ and the limit as $M\to\infty$ is given by
$f(p)=3-4p+p^2$. From the existence of the limits $\osV_k(K^{(1)}\cap K^{(2)})$ it is clear that $D'$ as chosen above is the correct scaling exponent. The notation for the limit is justified by the fact that $D'$ is almost surely the Hausdorff dimension of $K^{(1)}\cap K^{(2)}$, as the following statement clarifies.
\end{rem}
\begin{figure}[t]
  \begin{center}
  \includegraphics[width=\textwidth]{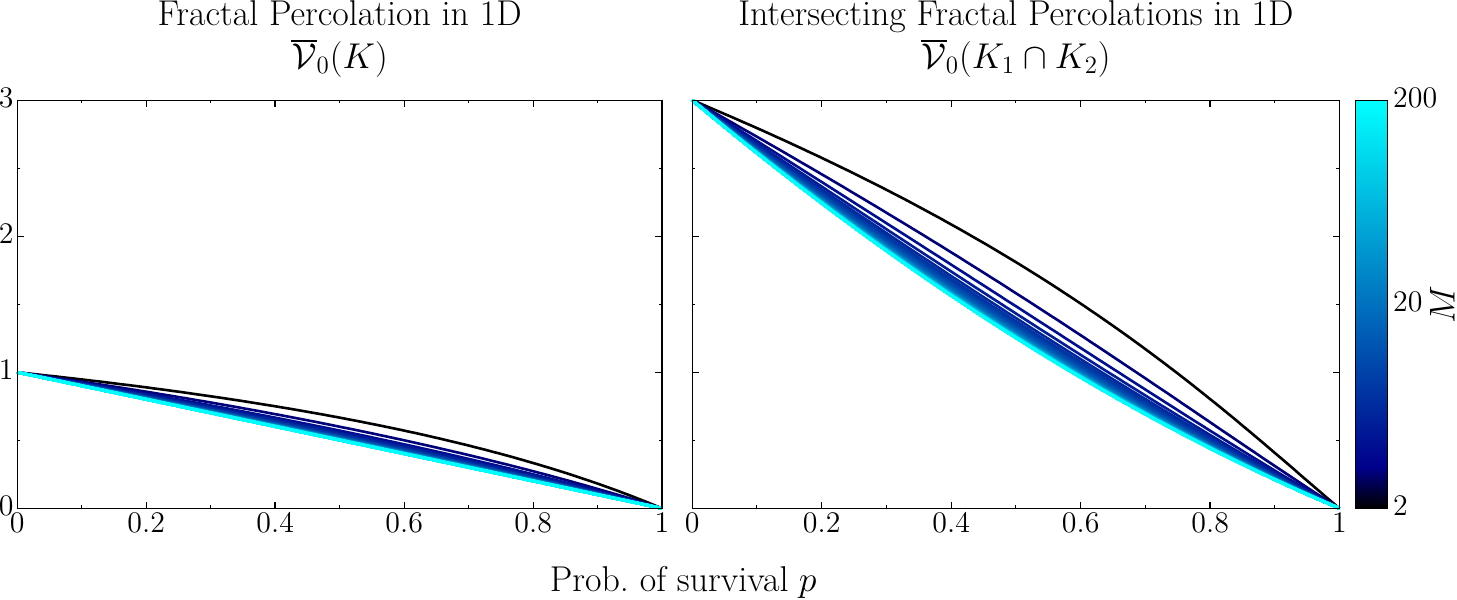}
  \caption{\label{fig:KcapK} The rescaled limits $\osV_0(K)$ (left) and $\osV_0(K^{(1)}\cap K^{(2)})$ (right)
  as functions of {$p\in(0,1]$} for different values of $M$ (color coded) as given by Corollary~\ref{cor:Vk-dim1} and Remark~\ref{rem:1dim-Klim}, respectively. The limit curves as $M\to\infty$ are shown in red.}
  \end{center}
\end{figure}

\begin{prop} \label{prop:dim_K1capK2}
  Let $K^{(1)}, K^{(2)}$ be independent fractal percolations on $[0,1]$ with the same parameters $M\in\N_{\geq 2}$ and {$p\in(0,1]$}. If $p\leq 1/\sqrt{M}$, then the set $K^{(1)}\cap K^{(2)}$ is almost surely empty. If  $p> 1/\sqrt{M}$, there is a positive probability that $K^{(1)}\cap K^{(2)}\neq \emptyset$ and, conditioned on $K^{(1)}\cap K^{(2)}\neq \emptyset$, we have $\dim_H(K^{(1)}\cap K^{(2)})=D'$ almost surely.
\end{prop}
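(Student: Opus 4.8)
The plan is to show that, almost surely, $K^{(1)}\cap K^{(2)}$ coincides with an auxiliary fractal percolation with parameters $M$ and $p^2$, and then to transfer the known nonemptiness and dimension results for fractal percolation (recalled in Section~\ref{sec:intro} and \cite{CCD88}) to $K^{(1)}\cap K^{(2)}$.

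First I would introduce the \emph{interior intersection} process. For $n\in\N$ let $\widetilde K_n$ be the union of those level-$n$ intervals $J_\sigma$ that are contained in $K_n^{(1)}\cap K_n^{(2)}$, i.e.\ whose entire ancestry survives in both copies, and set $\widetilde K:=\bigcap_n \widetilde K_n$. A subinterval of level $k$ is retained in this process precisely when it is kept in $K^{(1)}$ \emph{and} in $K^{(2)}$, which happens with probability $p\cdot p=p^2$, independently across subintervals and across levels, since the relevant retention variables are functions of disjoint families of the independent Bernoulli$(p)$ variables driving $K^{(1)}$ and $K^{(2)}$. Hence $\widetilde K$ is itself a fractal percolation on $[0,1]$ with subdivision parameter $M$ and survival probability $p^2$. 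By \eqref{eq:dimF} (applied in dimension $d=1$ with survival probability $p^2$) together with \cite{CCD88}, the set $\widetilde K$ is almost surely empty when $p^2\le 1/M$, and when $p^2>1/M$ it is nonempty with positive probability and satisfies $\dim_H\widetilde K=\log(Mp^2)/\log M=D'$ almost surely conditioned on $\widetilde K\neq\emptyset$. Note that $p^2\le 1/M$ is exactly $p\le 1/\sqrt M$, so this already produces the three desired assertions for $\widetilde K$.

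It remains to compare $\widetilde K$ with $K^{(1)}\cap K^{(2)}$. The inclusion $\widetilde K\subseteq K^{(1)}\cap K^{(2)}$ is immediate. Conversely, if $x\in K^{(1)}\cap K^{(2)}$ is not an $M$-adic rational, then for every $n$ there is a unique level-$n$ interval $I_n(x)$ containing $x$, and $x\in K_n^{(i)}$ forces $I_n(x)\subseteq K_n^{(i)}$ for $i=1,2$; letting $n\to\infty$ gives $x\in\widetilde K$. Thus $K^{(1)}\cap K^{(2)}\setminus\widetilde K$ is contained in the countable set of $M$-adic rationals in $[0,1]$. The key step is to show that this difference is almost surely empty whenever $p<1$. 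For a single copy $K$ and a fixed $M$-adic rational $z$, the event $z\in K_n$ requires at least one of the (at most two) level-$n$ intervals having $z$ as an endpoint to survive, so $\P(z\in K)\le\P(z\in K_n)\le 2p^n\to 0$, whence $\P(z\in K)=0$. By independence, $\P(z\in K^{(1)}\cap K^{(2)})=\P(z\in K^{(1)})\,\P(z\in K^{(2)})=0$, and a countable union bound over all $M$-adic rationals shows that almost surely no such point lies in $K^{(1)}\cap K^{(2)}$. Therefore $K^{(1)}\cap K^{(2)}=\widetilde K$ almost surely, and all three assertions transfer directly.

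I expect the treatment of the boundary ($M$-adic) points to be the only genuine obstacle. A priori one must rule out that $K^{(1)}\cap K^{(2)}$ is nonempty merely through isolated corner-type touchings; this would spoil the emptiness statement at and below the critical value $p=1/\sqrt M$, where the explicit first-moment formula of Proposition~\ref{prop:MPcapMP} no longer forces emptiness, since $\E V_0(K_n^{(1)}\cap K_n^{(2)})$ then tends to a positive constant rather than to $0$ (cf.\ Remark~\ref{rem:1dim-Klim}). The uniform estimate $\P(z\in K)\le 2p^n$ disposes of these points cleanly for all $p<1$, after which the identification of the interior process with a fractal percolation of parameters $(M,p^2)$ yields the proposition in all three regimes at once.
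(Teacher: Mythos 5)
Your argument is correct, and it rests on the same central device as the paper's proof---the coupling of $K^{(1)}\cap K^{(2)}$ with a fractal percolation $\widetilde K$ of parameters $(M,p^2)$---but it disposes of the discrepancy between the two sets in a genuinely different and more decisive way. The paper only establishes the inclusion $\widetilde K\subseteq K^{(1)}\cap K^{(2)}$; it must then argue separately (i) that the events $\{\widetilde K\neq\emptyset\}$ and $\{K^{(1)}\cap K^{(2)}\neq\emptyset\}$ agree up to null sets, by showing that once $\widetilde K_n=\emptyset$ the finitely many remaining $M$-adic touching points are each retained with probability $p^2$ per level and hence die out a.s., and (ii) that $\dim_H(K^{(1)}\cap K^{(2)})\le D'$, via an explicit covering by the level-$n$ intervals of $\widetilde K_n$ together with singletons of diameter zero, combined with the a.s.\ growth rate of the Galton--Watson counts $Z_n$ to bound $\Ha^{D'}$. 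You instead prove the stronger statement that $K^{(1)}\cap K^{(2)}=\widetilde K$ almost surely: the difference is contained in the $M$-adic rationals, a fixed $M$-adic point $z$ satisfies $\P(z\in K^{(i)})\le\lim_{n\to\infty}2p^n=0$ for $p<1$, and a countable union bound removes all such points simultaneously. This renders the covering argument and the separate identification of the conditioning events unnecessary, since all three assertions then transfer verbatim from the classical results for fractal percolation with parameter $p^2$ cited around \eqref{eq:dimF}. The only loose end is $p=1$, where your exceptional-point estimate is vacuous, but there the proposition is trivial because $K^{(1)}\cap K^{(2)}=[0,1]=\widetilde K$ and $D'=1$. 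On balance your route is shorter and cleaner; the paper's covering argument buys only the minor convenience of never having to decide whether the exceptional points actually belong to the limit set, since singletons contribute nothing to the Hausdorff sums anyway.
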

\begin{proof}
        For any {$p\in(0,1]$}, the set $K^{(1)}\cap K^{(2)}$ can be coupled with a fractal percolation $F$ on $[0,1]$ with parameter $p^2$ (and the same $M$) by retaining an interval $I_\sigma$ of level $n$ if and only if it is contained in both sets $K_n^{(1)}$ and $K_n^{(2)}$. Then $K^{(1)}\cap K^{(2)}$ dominates $F$. Hence, almost surely, $\dim_H\left(K^{(1)}\cap K^{(2)}\right)\geq \dim_H F$. Now observe that conditioning on the event $\{K^{(1)}\cap K^{(2)}\neq\emptyset\}$ is the same as conditioning on $\{F\neq\emptyset\}$. Indeed, on the one hand the first event is obviously satisfied whenever the latter is. On the other hand, if $\{F=\emptyset\}$ holds, then there is some $n\in\N$ such that $F_n=\emptyset$. This implies that, for any $m\geq n$, $K_m^{(1)}\cap K_m^{(2)}$ consists of finitely many isolated points contained in the set $\{\frac k{M^{-n}}:k\in\{1,\ldots,M^n-1\}\}$ (cf.\ the proof of Proposition~\ref{prop:MPcapMP}). In particular, there are no new points generated after the $n$-th step. Each point at level $m\geq n$ is independently retained in the next step with probability $p^2$. This means in particular that $K^{(1)}\cap K^{(2)}$ is empty almost surely under the condition $F=\emptyset$.
        We conclude that, for $p\leq\sqrt{M}$, the set $K^{(1)}\cap K^{(2)}$ is empty almost surely, since $F$ has this property. Moreover, since conditioned on $F\neq\emptyset$ we have $\dim_H F=D'$ almost surely for any $p\geq\sqrt{M}$, we infer from the above inequality that conditioned  $K^{(1)}\cap K^{(2)}\neq\emptyset$, $D'$ is almost surely a lower bound for $\dim_H\left(K^{(1)}\cap K^{(2)}\right)$. (The same is true for the Minkowski dimension.) 

	We show that $D'$ is also an upper bound for $\dim_H\left(K^{(1)}\cap K^{(2)}\right)$. For any realization of $K^{(1)}\cap K^{(2)}$ and any $\delta>0$, a $\delta$-cover of $K^{(1)}\cap K^{(2)}$ is obtained by taking the cubes of level $n$ (for some $n$ large enough that $M^{-n}<\delta$) contained in $F_n$ (which cover $F$) and adding the finitely many singletons $\left\{\frac k{M^{-n}}\right\}$, $k=1,\ldots,M^{-n}-1$ which clearly cover the additional isolated points in $K^{(1)}\cap K^{(2)}$ not already covered by the chosen intervals. Using these covers and noting that the singletons have diameter zero and the intervals diameter $M^{-n}$, we get for any $s>0$, that $\Ha^s_\delta(K^{(1)}\cap K^{(2)})\leq Z_n M^{-ns}$, where $Z_n$ is the number of cubes in $F_n$. Since $Z_n$ is {the} size of the  $n$-th generation of a Galton-Watson process in which the expected number of offspring of an individuum is $M^{D'}=M^2p$, it is well known that $Z_n M^{-nD'}\to 1$ almost surely as $n\to\infty$. This shows $\Ha^{D'}(K^{(1)}\cap K^{(2)})<\infty$ almost surely and thus $\dim_H\left(K^{(1)}\cap K^{(2)}\right)\leq D'$.
\end{proof}

\paragraph*{The case $d=2$.} 
Now we provide proofs of the formulas for the three limit functionals $\osV_k(F)$, $k=0,1,2$ for fractal percolation $F$ in $\R^2$ stated in Theorem~\ref{thm:Vk-limit-dim2}. The starting point is again the general formula in Theorem~\ref{thm:Vk-limit-general}, which can be simplified further by using on the one hand the various symmetries in the fractal percolation model and on the other hand the properties of the functionals.

  \begin{proof}[Proof of Theorem~\ref{thm:Vk-limit-dim2}]
Let $M\in\N_{\geq 2}$ and {$p\in(0,1]$} and $k\in\{0,1,2\}$. By \eqref{eq:Vk-limit-general} in  Theorem~\ref{thm:Vk-limit-general}, we have
 \begin{align} \label{eq:Vk-limit-d=2}
  \osV_k(F)=
    q_{2,k} +\sum_{T\subset\{1,\ldots,M^2\},|T|\geq 2}(-1)^{|T|-1} \sum_{n=1}^\infty r^{n(D-k)} \E V_k(\bigcap_{j\in T} F^j_{n}).
  \end{align}
 Observe that among the intersections $\bigcap_{j\in T} F_n^j$ occurring in \eqref{eq:Vk-limit-d=2} only those need to be considered for which the corresponding intersection $\bigcap_{j\in T} J_j$ of the subcubes $J_j=\phi_j(J)$ is nonempty. All other intersections are empty almost surely and hence their expected intrinsic volumes are zero. The nonempty intersections of subcubes can be reduced to four basic cases, see Figure~\ref{fig:J_j}:
\begin{figure}[t]
     \begin{center}
     \includegraphics[width=3cm]{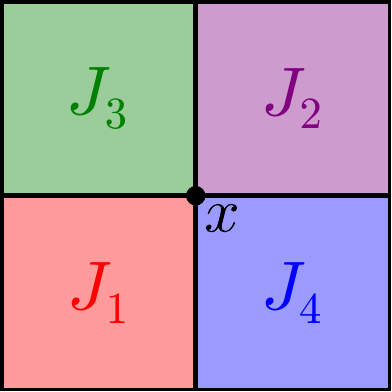}
     \caption{ \label{fig:J_j} Possible mutual positions of the basic cubes $J_j$ which produce nonempty intersections.}
     \end{center}
\end{figure}
There are only two ways in which two subcubes can have a nonempty intersection, namely they can intersect in a common face (like $J_1$ and $J_4$ in Fig,~\ref{fig:J_j}) or in a common corner (like $J_1$ and $J_2$). Three subcubes can only have a nonempty intersection at a common corner (like $J_1$, $J_2$ and $J_3$) and similarly four subcubes can only intersect in a common corner (like $J_1$, $J_2$, $ J_3$ and $J_4$). Only the number of intersections of each of these four types changes with $M$. These numbers are given by $2M(M-1)$, $2(M-1)^2$, $4(M-1)^2$ and $(M-1)^2$, respectively, independent of $p$ and $n$.
Hence formula \eqref{eq:Vk-limit-d=2} reduces to
  \begin{align}
    \osV_k(F) 
    =&q_{2,k}
    -2M(M-1)\sum_{n=1}^\infty  r^{n(D-k)} \E V_k(F_n^1\cap F_n^4)\label{eq:Zk-limit-dim2-1}\\
    &-2(M-1)^2 \sum_{n=1}^\infty r^{n(D-k)} \E V_k(F_n^1\cap F_n^2) \notag \\
     &+4(M-1)^2 \sum_{n=1}^\infty r^{n(D-k)} \E V_k(F_n^1\cap F_n^2\cap F_n^3) \notag \\
      &-(M-1)^2 \sum_{n=1}^\infty r^{n(D-k)} \E V_k\left(\bigcap_{j=1}^4 F_n^j\right)
      .\notag
  \end{align}

For $k=2$, i.e.\ for the area $V_2$ in $\R^2$, it is enough to observe that the area of all the intersections of the level sets $F_n^j$ in this formula are almost surely zero (they are all contained in a line segment), implying that $\E V_2(\bigcap_{j\in T} F_n^j)=0$ for all $n\in\N$ and all index sets $T$ with $|T|\geq 2$. Therefore,
$$
\osV_2(F)=q_{2,2}=V_2([0,1]^2)=1,
$$
independent of $M$ and $p$ as asserted in Theorem~\ref{thm:Vk-limit-dim2}.

For $k=1$, i.e.\ for the ``boundary length'' $V_1$, only the intersections of the first type $F_n^1\cap F_n^4$ need to be considered, while for the other three types the intersection is at most one point, implying that the expected boundary length vanishes independent of $n$. This yields
\begin{align} \label{eq:V1F}
    \osV_1(F) 
    &=q_{2,1}
    -2M(M-1)\sum_{n=1}^\infty  r^{n(D-1)} \E V_1(F_n^1\cap F_n^4).
 \end{align}
We claim that, for each $n\in\N$,
\begin{align} \label{eq:dim-red-k1}
  \E V_1(F_n^1\cap F_n^4)= p^{2n}/M.
\end{align}
We will show below that this follows from Proposition~\ref{prop:MPcapMP}.
Plugging \eqref{eq:dim-red-k1} into equation \eqref{eq:V1F} and recalling that $r^{D-1}=M^{-D+1}=(M\,p)^{-1}$, we conclude
 \begin{align*}
    \osV_1(F) 
    &=2 - 2(M-1)\sum_{n=1}^\infty (M\,p)^{-n} p^{2n}
    =2-2(M-1)\sum_{n=1}^\infty (p/M)^n\\
    &=2-2(M-1)\frac p{M-p}=\frac{2M(1-p)}{M-p}.
 \end{align*}

For $k=0$, i.e.\ for the Euler characteristic $V_0$, all terms in the above formula \eqref{eq:Zk-limit-dim2-1} are relevant and contribute to the limit. It is rather easy to see that
$V_0(F_n^1\cap F_n^2\cap F_n^3\cap F_n^4)=1$ with probability $p^{4n}$, since at all levels $m=1,\ldots,n$, in each of the four cubes $J_i$, $i=1,\ldots, 4$ the subcube of level $m$ which intersects the common corner needs to survive (which happens with probability $p$,
independently of all the other subcubes of any level). Otherwise the intersection of the four sets $F^j_n$ will be empty. Hence, for each $n\in\N$ (and each $M\geq 2$),
\begin{align}
  \label{eq:type4-0} \E V_0(F_n^1\cap F_n^2\cap F_n^3\cap F_n^4)=p^{4n}.
\end{align}
Therefore, the sum in the last line of formula \eqref{eq:Zk-limit-dim2-1} is given by
\begin{align} \label{eq:E4}
   \sum_{n=1}^\infty r^{nD} \E V_0\left(\bigcap_{j=1}^4 F_n^j\right)= \sum_{n=1}^\infty (r^D\, p^4 )^{n}=\frac{r^D\,p^4}{1-r^D\,p^4}=\frac{p^3}{M^2-p^3},
\end{align}
where the last equality is due to the relation $p\, r^D=r^2=M^{-2}$. (Note that the geometric series above converges, since $p^4 \,r^D=p^3\,M^{-2}<1$ for any {$p\in(0,1]$} and any integer $M\geq2$.)

Similarly, one observes that $V_0(F_n^1\cap F_n^2\cap F_n^3)=1$ with probability $p^{3n}$ and $V_0(F_n^1\cap F_n^2)=1$ with probability $p^{2n}$ for $n\in\N$, which yields for the sums in the third and the second line in formula \eqref{eq:Zk-limit-dim2-1} the expressions
\begin{align}
   \sum_{n=1}^\infty r^{nD} \E V_0(F_n^1\cap F_n^2\cap F_n^3)= \sum_{n=1}^\infty (r^D\,p^3)^{n}=\frac{p^2}{M^2-p^2}
\end{align}
and
\begin{align}
   \sum_{n=1}^\infty r^{nD} \E V_0(F_n^1\cap F_n^2)= \sum_{n=1}^\infty (r^D p^2)^{n}=\frac{p}{M^2-p}.
\end{align}
It remains to compute the expected Euler characteristic for the type $F_n^1\cap F_n^4$.
We claim that, for any $n\in\N$,
\begin{align} \label{eq:dim-red-k0}
   \E &V_0(F_n^1\cap F_n^4)=
    (Mp^2)^n\times\\&\left(\frac 3M-2 M^{-n}
-4 \frac{M-1}{M-p}\left[\frac pM-\left(\frac pM\right)^{n}\right]
+\frac{M-1}{M-p^2}\left[\frac{p^2}M-\left(\frac {p^2}M\right)^{n}\right]\right).\notag
\end{align}
We will demonstrate below that this follows from Proposition~\ref{prop:MPcapMP}. Plugging \eqref{eq:E4} -- \eqref{eq:dim-red-k0} into 
 \eqref{eq:Zk-limit-dim2-1} and computing the remaining series yields the missing terms of $\osV_0(F)$. More precisely, we get for the last sum on the first line of equation \eqref{eq:Zk-limit-dim2-1} the expression
 \begin{align*}
    E_1&:=\frac{2(M-1)^2p}{M-p}\left(\frac 3{M-1}-\frac{4p}{M-p}+\frac{p^2}{M-p^2}\right)-2M(M-1)^2p\times\\
    &\quad\times\left(\frac{2}{(M-1)(M^2-p)}-\frac{4p}{(M-p)(M^2-p^2)}+\frac{p^2}{(M-p^2)(M^2-p^3)}\right)
  \end{align*}
  and therefore
 \begin{align*}
    \osV_0(F) 
    &=1-E_1+(M-1)^2\left(-\frac{2p}{M^2-p}+\frac{4p^2}{M^2-p^2}-\frac{p^3}{M^2-p^3}\right).
\end{align*}
Combining some of the terms gives the formula stated in Theorem~\ref{thm:Vk-limit-dim2} for $\osV_0(F)$.

To complete the proof, it remains to verify equations \eqref{eq:dim-red-k1} and \eqref{eq:dim-red-k0}.
To understand the structure of $F_n^1\cap F_n^4$, it is enough to study the intersection of two independent 1-dimensional fractal percolations $K^{(1)}$ and $K^{(2)}$ defined on a common interval $[0,1]$ (with the same parameters $M$ and $p$ as $F$). 
For $n\in \N$ and $i=1,2$, let $K_n^{(i)}$ denote the $n$-th steps of their construction. Similarly as in \eqref{eq:basic-sim}, let $\widetilde K_n^{(i)}$, $i=1,2$ be the random set, which equals $K_n^{(i)}$ with probability $p$ and is empty otherwise, i.e.\ we add an additional $0$-th step to decide whether the set $K_n^{(i)}$, $n\in\N$, is kept or discarded. This is to account for the first step of the construction of $F$ (in which the cubes $J_i$ are discarded with probability $1-p$).
Then, for each $n$, we have the following equality in distribution
\begin{align}
  \label{eq:MpcapMP-dim1-2}
  F_n^1\cap F_n^4=\psi (\widetilde K_{n-1}^{(1)}\cap \widetilde K_{n-1}^{(2)}),
\end{align}
where $\psi:\R\to\R^2, x\mapsto (t/M) a+ ((1-t)/M) b$ is the similarity, which maps $[0,1]$ to the segment $J_1\cap J_4$ with endpoints $a$ and $b$. Since intrinsic volumes are independent of the ambient space dimension, motion invariant and homogeneous, this implies in particular
\begin{align} \label{eq:psi}
   \E V_k(F_n^1\cap F_n^4)&=\E V_k(\psi (\widetilde K_{n-1}^{(1)}\cap \widetilde K_{n-1}^{(2)}))\\
   &=r^k p^2 \E V_k(K_{n-1}^{(1)}\cap K_{n-1}^{(2)}).\notag
\end{align}
Now the claims \eqref{eq:dim-red-k1} and \eqref{eq:dim-red-k0} follow by combining \eqref{eq:psi} with Proposition~\ref{prop:MPcapMP}.
\end{proof}

\begin{rem} \label{rem:speed}
  From the proof of Theorem~\ref{thm:Vk-limit-dim2}, we also get explicit expressions for the expected intrinsic volumes of the approximation sets $F_{n}$ for each $n\in\N$. 
  To determine $\overline{v}_k(m):=r^{m(D-k)}\E V_k(F_{m})$, it is enough to truncate all the sums in formula \eqref{eq:Zk-limit-dim2-1} after the $m$-th step and compute the resulting finite geometric sums. This yields for $k=0$ and $n\in\N$,
  \begin{align*}
  \overline{v}_0(n)&=1-\frac{2p(M-1)^2}{M-p}\left(\frac 3{M-1}-\frac{4p}{M-p}+\frac{p^2}{M-p^2}\right)\left[1-\left(\frac pM\right)^n\right]\\
        &+\frac{2p(M^2-1)}{M^2-p}\left[1-\left(\frac{p}{M^2}\right)^n\right]-\frac{4p^2(M-1)^2}{(M-p)^2}\left[1-\left(\frac{p^2}{M^2}\right)^n\right]\\
        &+\frac{p^3(M-1)^2(M+p^2)}{(M-p^2)(M^2-p^3)}\left[1-\left(\frac{p^3}{M^2}\right)^n\right].
\end{align*}
It is easy to see that this sequence converges exponentially fast to $\osV_0(F)$ as $n\to\infty$. More precisely, we have
$$
\overline{v}_0(n)-\osV_0(F)\sim c\,(p/M)^n, \quad \text{ as } n\to \infty,
$$
(i.e.\ the quotient of the left and the right hand side converges to $1$) with the constant $c:=\frac{2p(M-1)^2}{M-p}\left(\frac 3{M-1}-\frac{4p}{M-p}+\frac{p^2}{M-p^2}\right)$ being positive for each {$p\in(0,1]$} and $M\in\N_{\geq 2}$.
  Moreover, the sequence $(\overline{v}_0(n))_n$ is eventually strictly decreasing, i.e.\ strictly decreasing from some index $n_0\in \N$. This exemplifies that the convergence $\overline{v}_k(n)\to \osV_k(F)$ is extremely fast and that the functionals $\osV_k(F)$ can be approximated well by the $\overline{v}_k(n)$. This was also observed in simulations, where already for small $n$ (like $n=8$, even for $M=2$, see Fig.~\ref{fig:V0}) $\overline{v}_k(n)$ is virtually indistinguishable from the limit $\osV_k(F)$, see also Remark~\ref{rem:simu} below. Fast convergence can also be expected for the limits $\osV_k(K)$ of other random self-similar sets $K$, for which no exact formula may be available. It is another intriguing question whether a similar speed of convergence can be expected for the percolation probabilities of $F_n$.
\end{rem}

\begin{rem}(On the simulation study) \label{rem:simu}
   Due to the fast convergence of the studied geometric functionals, their
numerical estimation is efficient and accurate.
A simulation study demonstrates their potential as robust shape descriptors for
applications, see Fig.~\ref{fig:V0}.
To generate the approximations of fractal percolation, we create black-and-white pixel images by hierarchically simulating the survival or death of squares (given by patches of pixels).
We use the MT19937 generator \cite{MatsumotoNishimura1998} (known as
``Mersenne Twister'') to generate the required Bernoulli variables.
Taking advantage of the additivity of the Minkowski functionals, we
compute the Euler characteristic using an efficient algorithm, where the
computation time grows linearly with the system size.
We simply iterate over all $2\times 2$ neighborhoods of pixels and add the
corresponding values from a look-up table as described
in~\cite{GKSM2013}.
In two separate simulations using analogous parameters, we have computed
the Euler characteristic of $F_n$ and $C_n$ (see Section~\ref{sec:5}).

We simulate realizations of finite approximations for $M=2$, $3$, and $4$.
For each value of $M$, we choose three levels $n$ of the approximation
$n=32/(2^M)$, $32/(2^M)+2$, or $32/(2^M)+4$.
Since the rate of convergence increases with $M$, for larger $M$
smaller values of $n$ are sufficient. 
For each chosen value of the probability of survival $p=0.11, 0.13, \ldots, 0.99$,
we simulate $75000$, $5000$, or $2500$ samples for $M=2$, $3$, or $4$, respectively.
Only in the case of $M=2, p\leq 0.31$ for $F_n$, the number of samples
is increased by a factor 10 for improved statistics.

The mean values are unbiasedly estimated by the arithmetic mean of the
Euler characteristic of the samples.
The error bars in the plots represent 
the sample standard deviations.
The simulation results, shown in Figure~\ref{fig:V0},
are in excellent agreement with the analytic curves, see Remarks~\ref{rem:speed} and \ref{rem:speed2}.
The code is freely available via GitHub \cite{KWgithub}.
\end{rem}

\begin{rem} \label{rem:K1capK2}
{An essential observation in the proof of
  Theorem~\ref{thm:Vk-limit-dim2} (cf.\ eq.~\eqref{eq:MpcapMP-dim1-2}
  and the discussion preceding it) is that any intersection
  $F^{(1)}\cap F^{(2)}$ of two fractal percolations constructed in
  neighboring squares sharing a common side, can be modelled by the
  intersection $K^{(1)}\cap K^{(2)}$ of two independent
  one-dimensional fractal percolations $K^{(1)}, K^{(2)}$ on that side (with the same
  parameters $M$ and $p$ as the $F^{(i)}$).
  More precisely, the random sets $F^{(1)}\cap F^{(2)}$ and $K^{(1)}\cap
  K^{(2)}$ are equal in distribution.
  Therefore their Hausdorff and Minkowski dimensions must coincide.
  The almost sure dimension of the latter set has been determined in
  Proposition~\ref{prop:dim_K1capK2}. Moreover,
  Proposition~\ref{prop:dim_K1capK2} states that the intersection
  $K^{(1)}\cap K^{(2)}$ is almost surely empty for any $p\leq
  1/\sqrt{M}$ and so the same must hold for $F^{(1)}\cap F^{(2)}$. This
  observation allows a short alternative proof of the lower bound
  $1/\sqrt{M}$ of Chayes, Chayes and Durrett \cite{CCD88} for the
  percolation threshold of fractal percolation $F$ in $[0,1]^2$ (see
  \eqref{eq:pc-bounds}): The intersection of $F$ with any vertical line
  of the form $y=
  k/M^n$, where $n\in\N$ and $k\in\{1,\ldots, M^n-1\}$, can be modeled
  as a union of $M^n$ small copies of $K^{(1)}\cap K^{(2)}$. Any path in
  $F$ from left to right need to pass this line, which is impossible if
  these intersections are empty a.s., i.e. for any $p\leq 1/\sqrt{M}$.}
\end{rem}

\begin{rem}
  It is easy to see from Theorem~\ref{thm:Vk-limit-general} that also for fractal percolation in $\R^d$, the rescaled limit $\osV_d(F)$ of the volume equals 1 for any $p$ and $M$. Indeed, none of the intersections occurring in formula \eqref{eq:Vk-limit-general} will contribute to the limit as they are contained in lower dimensional subsets of $\R^d$.
\end{rem}

\section{Approximation of $F$ by the closed complements of $(F_{n})_n$.} \label{sec:5}
Now we consider the closed complements $C_{n}:=\cl{J\setminus F_n}$, $n\in\N_0$, of the construction steps $F_n$ of the fractal percolation process inside the unit cube $J=[0,1]^d$. Note that $C_0=\emptyset$, since $F_0=J$. The random sets $C_{n}$ are also given by
$$
C_{n}=\bigcup_{\sigma\in\Sigma_n\setminus\sT_n} J_\sigma,
$$
cf.\ Section~\ref{sec:2}, implying in particular that each realization of $C_{n}$ consists of a finite number of closed cubes and is thus polyconvex. Hence intrinsic volumes are well defined. The set $C_{n}$ consists of those subcubes $J_\sigma$ of level $n$ for which at least one of the cubes $J_{\sigma|i}$, $i\in\{1,\ldots, n\}$, was discarded.
We also introduce, for each $j\in\{1,\ldots,M^d\}$ (and each $n\in\N_0$), the set
$$
C_{n}^j:=\bigcup_{{\sigma\in\Sigma_n\setminus\sT_n},{\sigma|1=j}} J_\sigma,
$$
as the union of those cubes of level $n$ which are contained in $J_j\cap C_{n}$.

We are interested in the expected intrinsic volumes $\E V_k(C_{n})$, $k=0,\ldots, d$ and in particular in the limiting behaviour as $n\to\infty$, for which we have the following general formula analogous to \eqref{eq:Vk-limit-general} in Theorem~\ref{thm:Vk-limit-general}.

\begin{thm} \label{thm:Zk-limit-general2}
{Let $k\in\{0,\ldots,d-1\}$ and let $F$ be a fractal percolation in $\R^d$ with parameters $M\in\N_{\geq 2}$ and $p\in(r^{d-k},1]$. Let $D$ be the Minkowski dimension of $F$, see \eqref{eq:dimF}.  Then, the limit}
  $$
  \osV^c_k(F):=\lim_{n\to\infty} r^{n(D-k)} \E V_k(C_{n})
  $$
  exists and is given by the expression
  \begin{align} \label{eq:Zk-limit-general2}
    q_{d,k}\frac{M^{d-k}(1-p)}{M^{d-k}p-1} +\sum_{T\subset\{1,\ldots,M^d\},|T|\geq 2}(-1)^{|T|-1} \sum_{n=1}^\infty r^{n(D-k)} \E V_k(\bigcap_{j\in T} C_{n}^j),
  \end{align}
  where as before $q_{d,k}=V_k([0,1]^d)$.
\end{thm}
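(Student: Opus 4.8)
The plan is to mirror the renewal-type argument of Theorem~\ref{thm:Vk-limit-general}, tracking the one structural novelty: unlike a surviving part $F_n^j$, a complementary part $C_n^j$ can fill its entire subcube $J_j$. Concretely I would set $\overline{v}_k^c(n):=r^{n(D-k)}\E V_k(C_{n})$ and $w_k^c(n):=\overline{v}_k^c(n)-\overline{v}_k^c(n-1)$, and use the telescoping identity $\osV_k^c(F)=\overline{v}_k^c(0)+\sum_{n=1}^\infty w_k^c(n)$ exactly as in \eqref{eq:Z-w-relation}. Here the base value differs from the $F_n$ case: since $C_0=\emptyset$ we have $\overline{v}_k^c(0)=V_k(\emptyset)=0$, so the whole formula is generated by the increments $w_k^c(n)$, and in particular the constant prefactor in \eqref{eq:Zk-limit-general2} must arise from summing the $w_k^c(n)$.

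The new self-similarity input is a distributional decomposition of $C_n^j$ according to the fate of $J_j$ in the first step: if $J_j$ is discarded (probability $1-p$) then every level-$n$ subcube of $J_j$ has a discarded ancestor, so $C_n^j=J_j$; if $J_j$ survives (probability $p$) then $C_n^j=\phi_j(C_{n-1})$ in distribution for an independent copy of $C_{n-1}$. Taking expectations and using the homogeneity and motion invariance of $V_k$ yields the analogue of \eqref{eq:basic-sim-2},
\begin{align*}
\E V_k(C_n^j)=(1-p)\,r^k q_{d,k}+p\,r^k\,\E V_k(C_{n-1}),
\end{align*}
and hence, summing over $j$,
\begin{align*}
\sum_{j=1}^{M^d}\E V_k(C_n^j)=M^d(1-p)\,r^k q_{d,k}+M^d p\,r^k\,\E V_k(C_{n-1}).
\end{align*}
The second summand is precisely the term cancelling the $\overline{v}_k^c(n-1)$ contribution (via $M^d p\,r^k=r^{k-D}$), exactly as before, while the first summand is the genuinely new, deterministic contribution.

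Combining this with the inclusion–exclusion identity for $C_n=\bigcup_{j=1}^{M^d}C_n^j$, I obtain the direct analogue of \eqref{eq:wk-rep},
\begin{align*}
w_k^c(n)&=r^{n(D-k)}M^d(1-p)\,r^k q_{d,k}\\
&\quad+\sum_{T\subset\{1,\ldots,M^d\},|T|\geq 2}(-1)^{|T|-1}r^{n(D-k)}\E V_k\Bigl(\bigcap_{j\in T}C_n^j\Bigr),
\end{align*}
but carrying an extra deterministic term. Writing $a:=r^{D-k}=(M^{d-k}p)^{-1}$ and using $r^{n(D-k)}M^d r^k=a^nM^{d-k}$, that extra term equals $M^{d-k}(1-p)q_{d,k}\,a^{n}$; summing the geometric series over $n\geq1$ gives $q_{d,k}M^{d-k}(1-p)\cdot\frac{a}{1-a}=q_{d,k}\frac{M^{d-k}(1-p)}{M^{d-k}p-1}$, which is exactly the prefactor in \eqref{eq:Zk-limit-general2}. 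The series converges precisely when $a<1$, i.e.\ when $M^{d-k}p>1$, and this is equivalent to the hypothesis $k<D$.

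It remains to sum $\sum_n w_k^c(n)$ termwise and to interchange the two summations in the resulting double sum. As for Theorem~\ref{thm:Vk-limit-general}, every intersection $\bigcap_{j\in T}C_n^j$ with $|T|\geq2$ lies in the lower-dimensional face $\bigcap_{j\in T}J_j$, so absolute convergence of each inner series $\sum_n r^{n(D-k)}\E V_k(\bigcap_{j\in T}C_n^j)$ follows from the companion total-variation estimate proved in the last section (the analogue of Proposition~\ref{prop:Rk-conv} for the $C_n^j$); this is where the hypothesis $p>1/M$ enters, guaranteeing $M^{d-k}p>1$ for every relevant $k$. I expect this convergence bound to be the main obstacle, because the complementary parts are geometrically more intricate than the surviving parts: each $C_n^j$ may be the full face of $J_j$ whenever $J_j$ is deleted, so controlling the total variation of the curvature measures of their mutual intersections is more delicate than in the $F_n^j$ case, even though the intersections still live in a hyperplane and can be analysed through lower-dimensional fractal percolations. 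Granting that estimate, Fubini applied to the absolutely convergent double sum yields \eqref{eq:Zk-limit-general2}.
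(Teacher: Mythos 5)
Your proposal is correct and follows essentially the same route as the paper: the same telescoping decomposition with $\overline{v}^c_k(0)=0$, the same first-step identity $\E V_k(C_n^j)=(1-p)r^k q_{d,k}+p\,r^k\,\E V_k(C_{n-1})$ producing the extra deterministic geometric series that sums to the prefactor, and the same appeal to the companion total-variation estimate (Proposition~\ref{prop:Rk-conv2}) to justify summing and interchanging the two summations. The only minor imprecision is your parenthetical on where $p>1/M$ enters: in the paper it is used in the curvature bound through the ratio $\gamma=r/p<1$, whereas the condition $M^{d-k}p>1$ is precisely the hypothesis $k<D$ and is what makes the deterministic geometric series converge.
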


\begin{rem}
{The condition $p>r^{d-k}$, which is equivalent to $k<D$, is a
natural restriction for the existence of $\osV^c_k(F)$. If the
dimension $D$ of $F$ is smaller than the homogeneity index $k$ of the
functional, then the `edge effects' caused by the common boundary of
$C_{n}$ with $J=[0,1]^d$ will dominate the limiting behaviour and
therefore a different rescaling will be necessary. More precisely, since
for the cube $J$ no rescaling is necessary for the intrinsic volumes,
one would expect the limit $\lim_{n\to\infty} r^{n(k-k)} \E V_k(C_{n})$
to converge instead which is too rough to see the lower-dimensional set $F$. For $D<d-1$, for instance, it is easy to see that the surface area $C_{d-1}(C_{n},\bd J)\to V_{d-1}(J)$ as $n\to\infty$, while $C_{d-1}(C_{n},\bd F_n)\approx r^{(d-1-D)n}\to 0$.  We refer also to Corollary \ref{cor:5.2} below, where we compute explicitly  $\osV^c_0(F)$  for fractal percolation $F=F_p$ on the unit interval for all parameters $p$. It turns out that, for $p<1/M$, the edge effects dominate and we have $\osV^c_0(F_p)=\infty$, while $\osV^c_0(F_p)$ is finite for all $p\geq 1/M$, including the critical case $p=1/M$, for which the `edge effects' matter and contribute a second term in the limit. We expect that this is the generic behaviour of all functionals $\osV^c_k(F)$ in any dimension: convergence at and divergence below their critical value $p=r^{d-k}$.
}

\end{rem}
\begin{proof}
We follow the lines of the proof of Theorem~\ref{thm:Vk-limit-general}. Let
$$
\overline{v}^c_k(n):=r^{n(D-k)} \E V_k(C_{n}), \quad n\in\N_0.
$$
Since $C_0=\emptyset$, we have $\overline{v}^c_k(0)=\E V_k(\emptyset)=0$.
Setting
$$
w_k(n):= \overline{v}^c_k(n)-\overline{v}^c_k(n-1), \quad n\in\N,
$$
we observe that, similarly as in \eqref{eq:Z-w-relation} above,
\begin{align} \label{eq:Z-w-relation2}
\osV^c_k(F)=\lim_{n\to\infty} \overline{v}^c_k(n)=\sum_{n=1}^\infty w_k(n).
\end{align}
By definition of $w_k$, we have
\begin{align} \label{eq:Z-w-relation3}
  w_k(n)&= \overline{v}^c_k(n)-\overline{v}^c_k(n-1)
  = r^{n(D-k)}\left( \E V_k(C_{n})- M^dp \, r^{k} \E V_k(C_{n-1})\right).
\end{align}
Now recall from \eqref{eq:basic-sim} that, for each $j\in\{1,\ldots,M^d\}$, $F_n^j$ survives the first construction step with probability $p$ (in which case it is distributed like $\phi_j(F_{n-1})$) and it is empty otherwise. Thus we get for the closed complements $C_{n}^j$
\begin{align*}
  \E V_k(C_{n}^j)&=p \E V_k(\phi_j(C_{n-1}))+(1-p) \E V_k(J_j)\\
  &=p r^k \E V_k(C_{n-1})+(1-p)r^k q_{d,k},
\end{align*}
and therefore
$$
\sum_{j=1}^{M^d} \E V_k(C_{n}^j)=M^d p r^k \E V_k(C_{n-1})+M^d(1-p)r^k q_{d,k}. 
$$
Plugging this into \eqref{eq:Z-w-relation3} and recalling that $r^{-D}=M^dp$
 yields
\begin{align*}
  w_k(n)&= r^{n(D-k)}\left( \E V_k(C_{n})- \sum_{j=1}^{M^d} \E V_k(C_{n}^j)\right)+r^{(n-1)(D-k)}\frac{1-p}p q_{d,k}.
\end{align*}
Since $C_{n}=\bigcup_{j=1}^{M^d} C_{n}^j$, by the inclusion-exclusion principle, this can be expressed in a more convenient form:
for each $n\in\N$ and each $k\in\{0,\ldots,d\}$,
\begin{align}\label{eq:wk-rep2}
  w_k(n)&=  r^{(n-1)(D-k)}\frac{1-p}p q_{d,k}+\sum_{\begin{array}{@{}c@{}}
    {\scriptstyle T\subset\{1,...,M^d\}}\\[-1mm]{\scriptstyle |T|\geq 2}
  \end{array}}(-1)^{|T|-1} r^{n(D-k)} \E V_k\Big(\bigcap_{j\in T} C_{n}^j\Big).
\end{align}
Note that this is again a finite sum with a fixed number of terms (independent of $n$) and that all the intersections appearing in this formula are at most $d-1$-dimensional. The summation over $n$ can be shown to converge for each summand separately (see Proposition~\ref{prop:Rk-conv2} below; this is where the hypothesis {$p>r^{d-k}$ is used}).

 Inserting the representation \eqref{eq:wk-rep2} for $w_k$ into equation \eqref{eq:Z-w-relation2} yields
  \begin{align} \label{eq:Yk-proof}
    \osV^c_k(F) \notag&=\sum_{n=1}^\infty \left(r^{(n-1)(D-k)}\frac{1-p}p q_{d,k}+\sum_{\begin{array}{@{}c@{}}
    {\scriptstyle T\subset\{1,...,M^d\}}\\[-1mm]{\scriptstyle |T|\geq 2}
  \end{array}}\hspace{-3pt}(-1)^{|T|-1} r^{n(D-k)} \E V_k\Big(\bigcap_{j\in T} C_{n}^j\Big)\right)\notag\\
    &=q_{d,k}\frac{1-p}p\sum_{n=0}^\infty r^{n(D-k)}+\sum_{\begin{array}{@{}c@{}}
    {\scriptstyle T\subset\{1,...,M^d\}}\\[-1mm]{\scriptstyle |T|\geq 2}
  \end{array}}(-1)^{|T|-1} \sum_{n=1}^\infty  r^{n(D-k)} \E V_k(\bigcap_{j\in T} C_{n}^j),
  \end{align}
  where the convergence of the geometric series in the first term is  due to the assumption {$p>r^{d-k}$, which implies $D>k$}. {The convergence of the series in the last expression for each index set $T$ is ensured by} Proposition~\ref{prop:Rk-conv2} just below {(for which condition $p>r^{d-k}$ is needed again)}, justifying in particular the interchange of the summations and showing the existence of the limit of the $\overline{v}^c_k(n)$ as $n\to\infty$. Now formula \eqref{eq:Zk-limit-general2} follows easily by computing the series in the first term and recalling that $r^{-D}=M^dp$.
\end{proof}
\begin{prop}\label{prop:Rk-conv2}
{Let $k\in\{0,\ldots,d-1\}$ and $F$ be a fractal percolation in $[0,1]^d$ with parameters $M\in\N_{\geq 2}$ and $p\in(r^{d-k},1]$.
  Then, for each $T\subset\{1,\ldots, M^d\}$ with $|T|\geq 2$,}
  $$
  \sum_{n=1}^\infty r^{n(D-k)} \E C_k^\var(\bigcap_{j\in T} C_{n}^j)<\infty,
  $$
  where{,} as before{,} $C_k^{\var}(K)$ denotes the total mass of the total variation measure of the $k$-th curvature measure of a polyconvex set $K$.
  In particular, the sums
  $$
  \sum_{n=1}^\infty r^{n(D-k)} \E V_k(\bigcap_{j\in T} C_{n}^j)
  $$
  converge absolutely.
\end{prop}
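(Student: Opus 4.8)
The plan is to bound each expected total-variation term $\E C_k^\var(\bigcap_{j\in T} C_{n}^j)$ by an explicit geometric sequence in $n$ and then to observe that the resulting series converges exactly because $p>1/M$. First I would exploit that, for every $T$ with $|T|\ge 2$, the intersection $E_T:=\bigcap_{j\in T} J_j$ of the basic subcubes is either empty or an $m$-dimensional face with $m:=\dim E_T\le d-1$ (two distinct aligned cubes of the grid can meet in at most a facet). If $E_T=\emptyset$ the corresponding summand vanishes almost surely, so I may assume $E_T\neq\emptyset$. Since every $C_{n}^j$ is a union of axis-parallel level-$n$ cubes of side $r^n$ contained in $J_j$, the intersection $\bigcap_{j\in T} C_{n}^j$ lies in $E_T$ and is itself a union of $m$-dimensional level-$n$ grid cells inside $E_T$. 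Because intrinsic volumes and curvature measures are independent of the ambient dimension for a set lying in an affine subspace, I would from here on treat $\bigcap_{j\in T} C_{n}^j$ as an $m$-dimensional polyconvex set and estimate its $k$-th curvature measure within $E_T$.

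The key quantitative input is a cellwise bound on the total variation. Let $N_n$ be the (random) number of level-$n$ cells of $E_T$ contained in $\bigcap_{j\in T} C_{n}^j$. Such a set is a union of $N_n$ congruent $m$-cubes of side $r^n$ from a fixed grid, so its $k$-th curvature measure is locally determined: near each grid face its value depends only on the finite local configuration of present cells, there are only finitely many such configurations (their number depending on $m$ alone), and each contributes total variation scaling like $(r^n)^k$. Hence there is a constant $c_{d,k}$, independent of $n$ and of $T$, with
\[
  C_k^\var\Big(\bigcap_{j\in T} C_{n}^j\Big)\le c_{d,k}\, N_n\, r^{nk}.
\]
Taking expectations and using the trivial deterministic bound $N_n\le M^{nm}$ (the total number of level-$n$ cells of the $m$-dimensional face $E_T$) gives $\E C_k^\var(\bigcap_{j\in T} C_{n}^j)\le c_{d,k}\, M^{nm}\, r^{nk}$. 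Note that for the complements $C_n^j$ a cell contributes whenever it fails to survive in each of the $|T|$ relevant subcubes, an event of probability $(1-p^n)^{|T|}\approx 1$, so the deterministic count is essentially sharp in expectation; this is why the method cannot go below $p>1/M$.

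It remains to sum. Recalling from \eqref{eq:dimF} that $r^{-D}=M^{d}p$, so that $r^{n(D-k)}=(M^{d}p)^{-n}M^{nk}$, I obtain
\[
  r^{n(D-k)}\,\E C_k^\var\Big(\bigcap_{j\in T} C_{n}^j\Big)
  \le c_{d,k}\,(M^{d}p)^{-n}M^{nk}\,M^{nm}\,r^{nk}
  = c_{d,k}\,\big(M^{d-m}p\big)^{-n}.
\]
Since $m\le d-1$, we have $M^{d-m}p\ge Mp>1$ by the hypothesis $p>1/M$, so $\sum_n (M^{d-m}p)^{-n}$ converges, which proves the first assertion and is precisely the point where $p>r$ enters. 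The second assertion follows because $|V_k(K)|=|C_k(K,\R^d)|\le C_k^\var(K)$ for any polyconvex $K$, so the series $\sum_n r^{n(D-k)}\E V_k(\bigcap_{j\in T} C_{n}^j)$ is dominated by the one just bounded and thus converges absolutely.

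The main obstacle is the cellwise total-variation estimate: one must control the total variation of the \emph{signed} measure $C_k(\cdot,\cdot)$ for a genuinely non-convex union of cubes, not merely its total mass $V_k$. I would establish it through the local determination of curvature measures for lattice-parallel polyconvex sets—the finitely-many-configurations argument sketched above, which also underlies the look-up-table computation in Remark~\ref{rem:simu}—and I expect this to be the shared technical core in the common treatment of this proposition together with Proposition~\ref{prop:Rk-conv}. Everything else (the face geometry, the deterministic cell count, and the geometric summation) is routine.
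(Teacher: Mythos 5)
Your proposal is correct and follows essentially the same route as the paper: reduce to the at most $(d-1)$-dimensional face $E_T$, bound $C_k^\var$ of the intersection deterministically by (number of level-$n$ grid cells) times $r^{nk}$ --- this is exactly the content of Lemma~\ref{lem:intersect-num}, which the paper establishes via the intersection-number estimate of Lemma~\ref{lem:ringcor} rather than your local-configuration sketch --- and then sum the resulting geometric series, which converges precisely because $M^{d-m}p\geq Mp>1$ when $p>1/M$. One small imprecision: $\bigcap_{j\in T}C_n^j$ need not be a union of full $m$-dimensional cells (it can contain lower-dimensional pieces such as isolated points, cf.\ Figure~\ref{fig:isolated-points}), so your $N_n$ should count grid faces of all dimensions up to $m$; since that count is still bounded by a constant times $M^{nm}$, the final estimate and the conclusion are unaffected.
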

We postpone the proof of Proposition~\ref{prop:Rk-conv2} to the last section.

\paragraph*{The case $d=1$.} In order to derive explicit formulas for the limits $\osV^c_k(F)$ in $\R^2$ it is again necessary to discuss these functionals in $\R$ first. We start with a general formula to determine the intrinsic volumes of a polyconvex set $C\subset\R$ from the intrinsic volumes of its closed complement. This will be used to derive expressions for $\E V_k(D_n^{(1)})$ and $\E V_k(D_n^{(1)}\cap D_n^{(2)})$ from the ones already obtained in Section~\ref{sec:3} for $\E V_k(K_n^{(1)})$ and $\E V_k(K_n^{(1)}\cap K_n^{(2)})$, where $D_n^{(i)}:=\overline{I\setminus K_n^{(i)}}$.
\begin{figure}[t]
      \begin{center}
        \includegraphics[width=5cm]{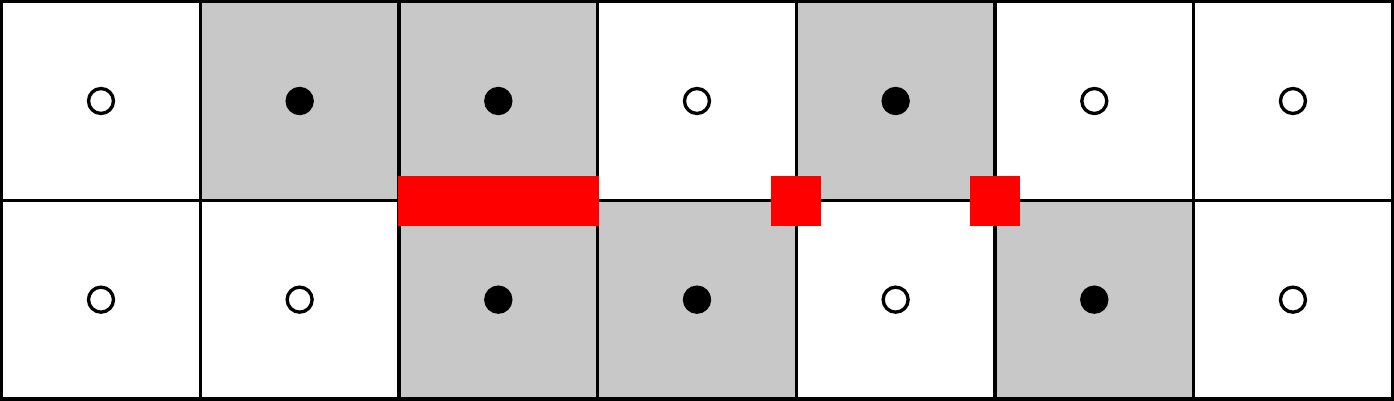}
      \end{center}
       \caption{\label{fig:isolated-points} In the intersection of two unions of intervals (or of unions of cubes in neighboring rows) isolated points appear, which need to be taken into account in the formulas.}
  \end{figure}

  \begin{lem}\label{lem:compl-dim1}
  Let $I:=[0,1]\subset \R$ be the unit interval and let $K\subset I$ be polyconvex (i.e.\ a finite union of intervals). Then the closed complement $C:=\cl{I\setminus K}$ of $K$ within $I$ is polyconvex, $V_1(C)=1-V_1(K)$ and
    \begin{align*}
      V_0(C)=1+V_0(K) -\ind_K(0)-\ind_K(1)-N(K),
    \end{align*}
    where $\ind_A$ denotes the indicator function of a set $A$ and $N(A)$ is the number isolated points in $A$.
      Moreover, if $K'\subset I$ is a second polyconvex set and $C':=\cl{I\setminus K'}$, then
    \begin{align*}
      V_1(C\cap C')&=1-V_1(K)-V_1(K')+V_1(K\cap K') \quad \text{ and }\\
      V_0(C\cap C')&=1+V_0(K)+V_0(K')-V_0(K\cap K') -\ind_{K\cup K'}(0)\\&\phantom{==}-\ind_{K\cup K'}(1)-N(K)-N(K')+N(K\cap K').
    \end{align*}
  \end{lem}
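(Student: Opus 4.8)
The plan is to reduce the whole statement to the single identity for $V_0(C)$, using throughout that in $\R$ the functional $V_1$ is Lebesgue measure (length) and $V_0$ is the number of connected components, both additive on the convex ring $\sR^1$. The length statements are immediate: since $K\cup C=I$ and the overlap $K\cap C$ is a finite set of endpoints and hence of length zero, additivity of $V_1$ gives $V_1(C)=V_1(I)-V_1(K)=1-V_1(K)$, while the formula for $V_1(C\cap C')$ will drop out of the reduction described in the last paragraph. So the real content sits in the Euler-characteristic formulas.

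For $V_0(C)$ I would apply additivity of $V_0$ to $I=K\cup C$. As $V_0(I)=1$, this gives $V_0(C)=1-V_0(K)+V_0(K\cap C)$, and $K\cap C$ is a finite point set, so it remains only to count its cardinality. Each of the $V_0(K)$ maximal components of $K$ contributes those of its endpoints that belong to $\cl{I\setminus K}$: a nondegenerate interval contributes both endpoints, except that an endpoint equal to $0$ or $1$ does not lie in $C$ (there are no complementary points beyond the ends of $I$), while an isolated point of $K$ contributes a single point. Adding up these contributions yields $V_0(K\cap C)=2V_0(K)-N(K)-\ind_K(0)-\ind_K(1)$, and inserting this gives the asserted formula. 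I expect this endpoint accounting to be the only genuinely delicate step, since one must separately track the two edge effects at $0$ and $1$ and the fact that an isolated point carries one boundary point rather than two; in the intended applications the isolated points arise as touching points $k/M^n$ and therefore lie in the open interval, which makes the correction terms combine cleanly as above.

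The two-set formulas then follow with no further geometric input. The key observation is that closure commutes with finite unions and $(I\setminus K)\cup(I\setminus K')=I\setminus(K\cap K')$, so $C\cup C'=\cl{I\setminus(K\cap K')}$ is precisely the closed complement of the polyconvex set $K\cap K'$. Writing $V_k(C\cap C')=V_k(C)+V_k(C')-V_k(C\cup C')$ by additivity and substituting the single-set formulas for $C$, $C'$ and $C\cup C'$ then finishes the argument. For $k=1$ one gets $V_1(C\cap C')=(1-V_1(K))+(1-V_1(K'))-(1-V_1(K\cap K'))$, which is the claimed identity. For $k=0$ the constant terms give $1$, the Euler and isolated-point terms assemble directly into $V_0(K)+V_0(K')-V_0(K\cap K')$ and $-N(K)-N(K')+N(K\cap K')$, and at each $x\in\{0,1\}$ the indicator inclusion-exclusion $\ind_K(x)+\ind_{K'}(x)-\ind_{K\cap K'}(x)=\ind_{K\cup K'}(x)$ converts the three corrections into $\ind_{K\cup K'}(x)$; this reproduces the stated formula for $V_0(C\cap C')$. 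The apparent non-additivity of the isolated-point count $N$ causes no difficulty here precisely because $C\cup C'$ is the complement of $K\cap K'$ rather than of $K\cup K'$.
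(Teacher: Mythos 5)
Your proposal is correct and follows the paper's overall architecture exactly: the $V_1$ identities via additivity on $I=K\cup C$, and the two-set formulas via the observation $C\cup C'=\cl{I\setminus(K\cap K')}$ combined with inclusion--exclusion and $\ind_K+\ind_{K'}=\ind_{K\cap K'}+\ind_{K\cup K'}$. The one place you genuinely diverge is the core identity for $V_0(C)$. The paper argues directly on connected components: $\R\setminus K$ has $V_0(K)+1$ components, intersecting with $I$ removes one component for each of $0,1$ lying in $K$, and passing to the closure merges the two gaps adjacent to each isolated point of $K$, losing one component per such point. You instead apply additivity to $I=K\cup C$ and reduce everything to counting the finite set $K\cap C$, showing $V_0(K\cap C)=2V_0(K)-N(K)-\ind_K(0)-\ind_K(1)$ by tallying which endpoints of components of $K$ are adherent to $I\setminus K$. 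Both arguments are elementary and of comparable length; yours has the mild advantage of staying entirely within the additive calculus on the convex ring (no separate discussion of what taking the closure does to components), at the cost of the slightly delicate endpoint bookkeeping you identify. Note that both your count and the paper's closure argument implicitly assume every isolated point of $K$ lies in the open interval $(0,1)$ (an isolated point at $0$ or $1$ would break the formula, e.g.\ $K=\{0\}$ gives $C=[0,1]$ but the stated right-hand side equals $0$); this degenerate case never occurs in the applications, and you flag the assumption explicitly, so it is not a gap relative to the paper.
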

  \begin{proof}
  The first formula is an easy consequence of the additivity of $V_1$ noting that $I=K\cup C$, $V_1(I)=1$ and $V_1(C\cap K)=0$. The second formula for $V_1$ follows from the first one and additivity by noting that
  \begin{align}
    \label{eq:compl-dim1-proof1} C\cup C'=\overline{I\setminus (K\cap K')}.
  \end{align}

    In $\R$ the Euler characteristic $V_0$ equals the number of connected components of a polyconvex set.
    If $K\subset I$ has $k$ connected components, $k\in\N_0$, then $K^c=\R\setminus K$ has $k+1$ (including the two unbounded ones) and so $I\cap K^c$ has $k+1-\ind_K(0)-\ind_K(1)$, since an unbounded connected component of $K^c$ does only contribute a component to $I\cap K^c$ if it has nonempty intersection with $I$, that is, if $0$ or $1$, respectively, are not in $K$. Finally, taking the closure of $I\setminus K$ leaves the number of connected components unchanged, provided there are no isolated points in $K$. Any isolated point, however, reduces the number of connected components in $C$ by one, since it causes the two connected components of $I\setminus K$ adjacent to this point to merge to one component of $C$. This shows the first formula for $V_0$. The second formula follows from the first one taking into account \eqref{eq:compl-dim1-proof1}:
    \begin{align*}
      V_0&(C\cap C')=V_0(C)+V_0(C')-V_0(C\cup C')\\
      &=1+ V_0(K) -\ind_{K}(0)-\ind_{K}(1)-N(K)\\
      &\phantom{==}+1+V_0(K') -\ind_{K'}(0)-\ind_{K'}(1)-N(K')\\
      &\phantom{==}-1-V_0(K\cap K') +\ind_{K\cap K'}(0)+\ind_{K\cap K'}(1)+N(K\cap K')\\
      &=1+V_0(K)+V_0(K')- V_0(K\cap K')-\ind_{K\cup K'}(0)-\ind_{K\cup K'}(1)\\
      &\phantom{==}-N(K)-N(K')+N(K\cap K'),
    \end{align*}
    where we have used the additivity of the indicator function, implying $\ind_{K}+\ind_{K'}=\ind_{K\cap K'}+\ind_{K\cup K'}$. This completes the proof of the last formula.
  \end{proof}

  It is clear that corresponding formulas hold for the expected intrinsic volumes of random polyconvex subsets $K$ and $K'$ of $[0,1]$ and their closed complements. Note that the functional $N$ counting the number of isolated points is not additive. Below we always have the situation that $K$ and $K'$ have no isolated points, 
  while isolated points may appear in the intersection $K\cap K'$.

\begin{cor} \label{cor:MPcapMP-compl}
Let $K^{(1)}, K^{(2)}$ be two independent fractal percolations on the interval $I=[0,1]$ both with the same parameters $M\in\N_{\geq 2}$ and {$p\in(0,1]$}. For $n\in\N_0$, let $K_n^{(i)}$ denote the $n$-th step of the construction of  $K_i$, $i=1,2$ and let $D_n^{(i)}:=\cl{I\setminus K_n^{(i)}}$.
Then, for any $n\in\N_0$,
  \begin{align*}
    \E V_1(D_n^{(1)}\cap D_n^{(2)})&=1-2\E V_1(K_n^{(1)})+\E V_1(K_n^{(1)}\cap K_n^{(2)})
     \qquad \text{ and }\\
    \E V_0(D_n^{(1)}\cap D_n^{(2)})&=2\E V_0(K_n^{(1)})-\E V_0(K_n^{(1)}\cap K_n^{(2)})+\E N(K_n^{(1)}\cap K_n^{(2)})
    \\&\phantom{=}+1-4p^n+2p^{2n}.
  \end{align*}
  Moreover, we have $\E V_1(D_n^{(1)})=1-\E V_1(K_n^{(1)})$ and
  \begin{align*}
     \E V_0(D_n^{(1)})= \E V_0(K_n^{(1)})+1-2p^n.
  \end{align*}
\end{cor}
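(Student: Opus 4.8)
The plan is to obtain all four identities directly from Lemma~\ref{lem:compl-dim1} by taking expectations, so that the entire task reduces to evaluating a handful of elementary quantities. First I would record the two structural facts that pin down the isolated-point and boundary terms. Since each $K_n^{(i)}$ is a finite union of non-degenerate closed level-$n$ intervals (each of length $M^{-n}>0$), it contains no isolated points; hence $N(K_n^{(i)})=0$ almost surely and $\E N(K_n^{(i)})=0$. Moreover, the endpoint $0$ belongs to $K_n^{(i)}$ exactly when, at every level $m=1,\ldots,n$, the leftmost level-$m$ interval survives, an event of probability $p^n$; the same holds at the endpoint $1$. Thus $\E\ind_{K_n^{(i)}}(0)=\E\ind_{K_n^{(i)}}(1)=p^n$.

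For the single-set formulas I would apply $V_1(C)=1-V_1(K)$ and $V_0(C)=1+V_0(K)-\ind_K(0)-\ind_K(1)-N(K)$ from Lemma~\ref{lem:compl-dim1} with $K=K_n^{(1)}$ and take expectations. Inserting $\E N(K_n^{(1)})=0$ and $\E\ind_{K_n^{(1)}}(0)=\E\ind_{K_n^{(1)}}(1)=p^n$ yields $\E V_1(D_n^{(1)})=1-\E V_1(K_n^{(1)})$ and $\E V_0(D_n^{(1)})=\E V_0(K_n^{(1)})+1-2p^n$ immediately.

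For the intersections I would apply the two-set identities of Lemma~\ref{lem:compl-dim1} with $K=K_n^{(1)}$ and $K'=K_n^{(2)}$, then take expectations. The $V_1$ relation is immediate, and using that $K^{(1)}$ and $K^{(2)}$ are identically distributed (so $\E V_1(K_n^{(2)})=\E V_1(K_n^{(1)})$) gives the claimed formula. For $V_0$ the only term requiring care is the expectation of the union indicators $\ind_{K_n^{(1)}\cup K_n^{(2)}}(0)$ and $\ind_{K_n^{(1)}\cup K_n^{(2)}}(1)$. Here I would invoke the independence of $K^{(1)}$ and $K^{(2)}$: by inclusion--exclusion, $\P(0\in K_n^{(1)}\cup K_n^{(2)})=p^n+p^n-p^{2n}=2p^n-p^{2n}$, and likewise at $1$. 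Combining this with $\E N(K_n^{(i)})=0$, with $\E V_0(K_n^{(2)})=\E V_0(K_n^{(1)})$, and retaining the term $\E N(K_n^{(1)}\cap K_n^{(2)})$ produces exactly the asserted expression, the residual constant $1-4p^n+2p^{2n}$ arising as $1-2(2p^n-p^{2n})$.

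There is no substantial obstacle; the statement is a direct corollary of Lemma~\ref{lem:compl-dim1}. The one point to watch is the bookkeeping forced by the \emph{non-additivity} of $N$: while the individual counts $N(K_n^{(i)})$ vanish, the isolated points created in the intersection (where a level-$n$ interval of one set meets the endpoint of a level-$n$ interval of the other, cf.\ Figure~\ref{fig:isolated-points}) genuinely survive, so the term $\E N(K_n^{(1)}\cap K_n^{(2)})$ must be carried through and not dropped.
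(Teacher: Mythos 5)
Your proof is correct and follows essentially the same route as the paper: apply Lemma~\ref{lem:compl-dim1} pathwise, take expectations, and evaluate the auxiliary quantities via $\E N(K_n^{(i)})=0$, $\P(0\in K_n^{(i)})=\P(1\in K_n^{(i)})=p^n$, and inclusion--exclusion with independence to get $\P(0\in K_n^{(1)}\cup K_n^{(2)})=2p^n-p^{2n}$. The bookkeeping, including retaining the non-additive term $\E N(K_n^{(1)}\cap K_n^{(2)})$, matches the paper's argument exactly.
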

\begin{proof}
Applying Lemma~\ref{lem:compl-dim1} to realizations $C$, $C'$ of the random sets $D_n^{(1)}$, $D_n^{(2)}$, respectively, and taking expectations, we obtain, for $k=1$, directly the formula stated above
and, for $k=0$,
 \begin{align*}
  \E V_0(D_n^{(1)}\cap D_n^{(2)})&=1+ \E V_0(K_n^{(1)})+\E V_0(K_n^{(2)})-\E V_0(K_n^{(1)}\cap K_n^{(2)})\\
  &\quad -\P(0\in K_n^{(1)}\cup K_n^{(2)})-\P(1\in K_n^{(1)}\cup K_n^{(2)})\\
  &\quad -\E N(K_n^{(1)})-\E N(K_n^{(2)})+\E N(K_n^{(1)}\cap K_n^{(2)}).
\end{align*}
Now observe that almost surely the set $K_n^{(i)}$ contains no isolated points, implying that $\E N(K_n^{(i)})=0$. Moreover,
\begin{align*}
 \P(0\in K_n^{(1)}\cup K_n^{(2)})
 &=\P(0\in K_n^{(1)})+\P(0\in K_n^{(2)})-\P(0\in K_n^{(1)}\cap K_n^{(2)})\\&=2p^n-p^{2n},
\end{align*}
and similarly for the point $1$ instead of $0$. This shows the second formula.
The third formula is a direct application of the first formula in Lemma~\ref{lem:compl-dim1} to the realizations $C$ of the random set $D_n^{(1)}$. Similarly, the last formula follows by applying the second formula in Lemma~\ref{lem:compl-dim1} to the realizations $C$ of $D_n^{(1)}$ and taking expectations:
 \begin{align*}
  \E V_0(D_n^{(1)})&=1+\E V_0(K_n^{(1)})-\P(0\in K_n^{(1)})-\P(1\in K_n^{(1)})-\E N(K_n^{(1)})\\
  &=\E V_0(K_n^{(1)})+1 -2p^n,
\end{align*}
since $ \P(0\in K_n^{(1)})=\P(1\in K_n^{(1)})=p^n$ and $\E N(K_n^{(1)})=0$.
\end{proof}
To get more explicit expressions for $\E V_k(D_n^{(1)})$ and $\E V_k(D_n^{(1)}\cap D_n^{(2)})$ from Corollary~\ref{cor:MPcapMP-compl}, we can employ Propositions~\ref{prop:MP} and \ref{prop:MPcapMP} where formulas for $\E V_k(K_n^{(1)})$ and $\E V_k(K_n^{(1)}\cap K_n^{(2)})$ have been derived. The missing piece is an explicit expression for the expected number $\E N(K_n^{(1)}\cap K_n^{(2)})$ of isolated points.

\begin{prop} \label{prop:N-of-MPcapMP}
Let $K^{(1)}, K^{(2)}$ be independent fractal percolations on the interval $[0,1]$ both with the same parameters $M$ and $p$.
Then, for any $n\in\N_0$,
  \begin{align*}
    \E &N(K_n^{(1)}\cap K_n^{(2)})\\
    &=(Mp^2)^n\left(2-2 M^{-n}
-4p \frac{M-1}{M-p}\left[1-\left(\frac pM\right)^n\right]
+2p^2\frac{M-1}{M-p^2}\left[1-\left(\frac {p^2}M\right)^n\right]\right).
  \end{align*}
  \end{prop}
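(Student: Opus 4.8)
The plan is to set up a one-step renewal recursion for $\gamma_n := \E N(K_n^{(1)}\cap K_n^{(2)})$, in the same spirit as the recursion for $\beta_n$ in the proof of Proposition~\ref{prop:MPcapMP}. The starting observation is that every isolated point of $K_n^{(1)}\cap K_n^{(2)}$ must sit at one of the grid points $z=k/M^n$, $k\in\{1,\ldots,M^n-1\}$, since both sets are unions of closed level-$n$ intervals. Writing $L$ and $R$ for the two level-$n$ intervals adjacent to such a $z$, the point $z$ lies in $K_n^{(1)}\cap K_n^{(2)}$ iff in each copy at least one of $L,R$ survives, and it is isolated iff in addition neither $L$ nor $R$ survives in both copies simultaneously (for otherwise $z$ would be an endpoint of a genuine interval of the intersection). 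In particular, the endpoints $0$ and $1$ can never be isolated, because there a surviving boundary interval would make them an endpoint of a nondegenerate component.

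First I would split the grid points into those interior to one of the first-level cubes $J_1,\ldots,J_M$ and the $M-1$ points $z_j=j/M$ separating consecutive cubes. For the interior points, the self-similarity \eqref{eq:basic-sim} gives $K_n^{(1),j}\cap K_n^{(2),j}=\phi_j(\widetilde K_{n-1}^{(1)}\cap\widetilde K_{n-1}^{(2)})$ in distribution, and since (by the previous remark applied at level $n-1$) no isolated point of $K_{n-1}^{(1)}\cap K_{n-1}^{(2)}$ ever sits at $0$ or $1$, the map $\phi_j$ carries all isolated points into $\intr(J_j)$, where they remain isolated in the full intersection. Taking expectations and using the extra factor $p^2$ coming from the two independent $p$-thinnings defining the $\widetilde K_{n-1}^{(i)}$, the interior points contribute $Mp^2\gamma_{n-1}$.

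The crux is the boundary term at the points $z_j$. Here $L$ is the rightmost level-$n$ interval of $J_j$ and $R$ the leftmost of $J_{j+1}$; since they lie in different first-level cubes they share no common ancestor, so within one copy the survival indicators satisfy $\P(\text{both survive})=p^{2n}$ and $\P(\text{exactly one survives})=2(p^n-p^{2n})$. Combining two independent copies, I would enumerate the joint states of the four survival indicators and single out exactly the two configurations in which one copy keeps only $R$ and the other keeps only $L$; these are the only ones making $z_j$ isolated, giving $\P(z_j\text{ isolated})=2p^{2n}(1-p^n)^2$ and hence a boundary contribution $2(M-1)p^{2n}(1-p^n)^2$. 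This yields the recursion
\begin{align*}
  \gamma_n=Mp^2\gamma_{n-1}+2(M-1)p^{2n}(1-p^n)^2,\qquad \gamma_0=0.
\end{align*}

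Finally, I would unfold this linear recursion as $\gamma_n=2(M-1)(Mp^2)^n\sum_{i=1}^n(Mp^2)^{-i}(p^{2i}-2p^{3i}+p^{4i})$ and evaluate the three resulting geometric series $\sum_i M^{-i}$, $\sum_i(p/M)^i$ and $\sum_i(p^2/M)^i$, which recombine exactly into the claimed closed form. The main obstacle is the combinatorial bookkeeping in the boundary term: one has to argue cleanly that isolation at $z_j$ forces precisely the ``only-$R$ in one copy, only-$L$ in the other'' pattern, and that at the cube boundaries no interior isolated point is lost or double counted, so that the interior and boundary contributions are genuinely disjoint and exhaustive.
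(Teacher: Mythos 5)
Your proposal is correct and follows essentially the same route as the paper's proof: the decomposition of the isolated points into those interior to the first-level intervals (handled by self-similarity, contributing $Mp^2\gamma_{n-1}$) and the $M-1$ grid points $j/M$ (where the isolation probability $2p^{2n}(1-p^n)^2$ is computed by the same case analysis), followed by unfolding the linear recursion into geometric sums. You additionally spell out two small points the paper leaves implicit (that $0$ and $1$ are never isolated, and that isolated points of the rescaled copies remain isolated in the full intersection), which is fine.
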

  \begin{proof}
   First observe that for $n=0$ both sides of the formula equal zero and thus the formula holds in this case.
   Let $N(n):=N(K_n^{(1)}\cap K_n^{(2)})$ and let $N_j(n):=N(K_n^{(1)}\cap K_n^{(2)}\cap O_j)$, $j=1,\ldots,M$, be the number of those isolated points contained in the open subinterval $O_j:=\text{int}(J_j)=((j-1)/M,j/M)$. Then obviously
   \begin{align} \label{eq:EN1}
     N(n)=\sum_{j=1}^M N_j(n) +\sum_{j=1}^{M-1} \ind\{j/M \text{ isolated in } K_n^{(1)}\cap K_n^{(2)}\}.
   \end{align}
   Due to the self-similarity, $N_j(n)$ has the same distribution as $\widetilde{N}(n-1)$, where $\widetilde{N}(n-1)$ is the random variable which equals $N(n-1)$ with probability $p^2$ and is zero otherwise (which accounts for the effect that $J_j$ may be discarded in the first construction step of $K^{(1)}$ or $K^{(2)}$, in which case there are no isolated points generated). Moreover, by symmetry, the indicator variables in the second sum all have the same distribution given by
   \begin{align*}
     \P(\{1/M \text{ isolated in } K_n^{(1)}\cap K_n^{(2)}\})=2 p^{2n}(1-p^n)^2=:q_n(p). 
   \end{align*}
   Indeed, in order for $1/M$ to be isolated, either both $K_n^{(1),1}$ and $K_n^{(2),2}$ (with $K_n^{(i),j}$ as defined in \eqref{eq:Fj_ndef}) need to have a nonempty intersection with $1/M$ (which happens with probability $p^{2n}$) while at the same time both $K_n^{(1),2}$ and $K_n^{(2),1}$ do not intersect $1/M$ (probability $(1-p^n)^2$) or we have the same situation exactly reversed, i.e.\  $K_n^{(1),2}$ and $K_n^{(2),1}$ intersect $1/M$ while $K_n^{(1),1}$ and $K_n^{(2),2}$ do not. 
   Taking expectations in \eqref{eq:EN1}, we get
    \begin{align*}
     \E N(n)&=\sum_{j=1}^M p^2 \E N(n-1) +(M-1) \P(\{1/M \text{ isolated in } K_n^{(1)}\cap K_n^{(2)}\})\\
           &= Mp^2 \E N(n-1)+(M-1)q_n(p),
   \end{align*}
   which is a recursion relation for the sequence $(\gamma_n)_{n\in\N}$ with $\gamma_n:=\E N(n)$.
   By induction, we infer that
   \begin{align*}
      \gamma_n=(M-1)q_n(p)+ Mp^2\gamma_{n-1}
      =\ldots =(M-1)\sum_{s=1}^{n} (Mp^2)^{n-s} q_{s}(p).
   \end{align*}
   Since $q_s(p)=2 p^{2s} (1-2p^s+p^2s)=2 (p^{2s}-2p^{3s}+p^{4s})$, we conclude that
   \begin{align*}
      \gamma_n&=2(M-1)(Mp^2)^{n}\sum_{s=1}^{n} (Mp^2)^{-s} (p^{2s}-2p^{3s}+p^{4s})\\
      &=2(M-1)(Mp^2)^{n}\sum_{s=1}^{n} \left((1/M)^s-2 (p/M)^s+(p^2/M)^s\right)\\
      &= 2(M-1)(Mp^2)^{n}\\
      &\qquad \left(
      \left[1-\left(\frac1M\right)^n\right]-\frac{2p}{M-p}\left[1-\left(\frac pM\right)^n\right] +\frac{p^2}{M-p^2}\left[1-\left(\frac{p^2}M\right)^n\right]
            \right),
   \end{align*}
   from which the expression stated in Proposition~\ref{prop:N-of-MPcapMP} easily follows. 
   \end{proof}
Now we are ready to derive explicit expressions for $\E V_k(D_n^{(1)})$ and $\E V_k(D_n^{(1)}\cap D_n^{(2)})$ from Corollary~\ref{cor:MPcapMP-compl}.
\begin{thm} \label{thm:MPcapMP-compl}
Let $K^{(1)}, K^{(2)}$ be independent fractal percolations on the interval $I=[0,1]$ both with the same parameters $M\in\N_{\geq 2}$ and $p\in(0,1]$. For $n\in\N_0$, let $K_n^{(i)}$ be the $n$-th step of the construction of  $K_i$, $i=1,2$ and let $D_n^{(i)}:=\cl{I\setminus K_n^{(i)}}$.\\
Then, for any $n\in\N_0$, $\E V_1(D_n^{(1)}\cap D_n^{(2)})=1-2p^n+p^{2n}$ and
  \begin{align*}
    \E V_0(D_n^{(1)}\cap D_n^{(2)})=&2 (Mp)^n\left(1-p\frac{M-1}{M-p}\left[1-\left(\frac pM\right)^n\right]\right)+1-4p^n+2p^{2n}\\
    &+(Mp^2)^n\left(-1+p^2\frac{M-1}{M-p^2}\left[1-\left(\frac {p^2}M\right)^n\right]\right).
  \end{align*}
  Moreover, we have $\E V_1(D_n^{(1)})=1-p^n$ and
  \begin{align*}
     \E V_0(D_n^{(1)})= (Mp)^n\left(1-p\frac{M-1}{M-p}\left[1-\left(\frac {p}M\right)^n\right]\right)+1-2p^n.
  \end{align*}
\end{thm}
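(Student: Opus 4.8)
The plan is to treat this statement as a pure assembly step: each of the four formulas should follow by inserting the closed-form expressions established in Propositions~\ref{prop:MP}, \ref{prop:MPcapMP} and~\ref{prop:N-of-MPcapMP} into the identities of Corollary~\ref{cor:MPcapMP-compl}, which already express every quantity of interest through intrinsic volumes of the $K_n^{(i)}$, their pairwise intersection, and the isolated-point count $N(K_n^{(1)}\cap K_n^{(2)})$. No new probabilistic input is needed; the work is algebraic bookkeeping, and all identities hold for every $n\in\N_0$ since the cited results do.

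First I would dispose of the length functionals, which are immediate. Corollary~\ref{cor:MPcapMP-compl} gives $\E V_1(D_n^{(1)})=1-\E V_1(K_n^{(1)})$, and Proposition~\ref{prop:MP} supplies $\E V_1(K_n^{(1)})=p^n$, yielding $1-p^n$. For the intersection I would combine $\E V_1(D_n^{(1)}\cap D_n^{(2)})=1-2\E V_1(K_n^{(1)})+\E V_1(K_n^{(1)}\cap K_n^{(2)})$ with $\E V_1(K_n^{(1)})=p^n$ and $\E V_1(K_n^{(1)}\cap K_n^{(2)})=p^{2n}$ from Proposition~\ref{prop:MPcapMP}, giving $1-2p^n+p^{2n}$. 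The Euler characteristic of the single complement is equally direct: substituting the formula for $\E V_0(K_n^{(1)})$ from Proposition~\ref{prop:MP} into $\E V_0(D_n^{(1)})=\E V_0(K_n^{(1)})+1-2p^n$ already produces the stated expression with no further simplification.

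The one place that warrants care is the Euler characteristic of the intersection, where Corollary~\ref{cor:MPcapMP-compl} yields
\[
\E V_0(D_n^{(1)}\cap D_n^{(2)})=2\E V_0(K_n^{(1)})-\E V_0(K_n^{(1)}\cap K_n^{(2)})+\E N(K_n^{(1)}\cap K_n^{(2)})+1-4p^n+2p^{2n}.
\]
The key observation I would exploit is that the combination $-\E V_0(K_n^{(1)}\cap K_n^{(2)})+\E N(K_n^{(1)}\cap K_n^{(2)})$ collapses: comparing Propositions~\ref{prop:MPcapMP} and~\ref{prop:N-of-MPcapMP}, both carry the prefactor $(Mp^2)^n$ and have identical $M^{-n}$-terms and identical $4p\tfrac{M-1}{M-p}[1-(p/M)^n]$-terms, which cancel in pairs; the constants contribute $-3+2=-1$, while the $p^2$-coefficient terms combine as $-\tfrac{(M-1)p^2}{M-p^2}+\tfrac{2(M-1)p^2}{M-p^2}=\tfrac{(M-1)p^2}{M-p^2}$. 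Hence this combination equals $(Mp^2)^n\bigl(-1+p^2\tfrac{M-1}{M-p^2}[1-(p^2/M)^n]\bigr)$, exactly the last line of the asserted formula. Adding the remaining pieces $2\E V_0(K_n^{(1)})$ and $1-4p^n+2p^{2n}$ then matches the statement verbatim.

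The main obstacle, such as it is, is purely clerical: lining up the four structurally parallel terms of Propositions~\ref{prop:MPcapMP} and~\ref{prop:N-of-MPcapMP} so that the pairwise cancellations become visible. As a consistency check one may evaluate at $n=0$, where $K_0^{(i)}=I$ forces $D_0^{(i)}=\emptyset$ and each left-hand side must vanish; every stated expression indeed collapses to $0$ there.
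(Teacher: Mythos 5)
Your proposal is correct and follows exactly the paper's own route: the paper likewise just combines Corollary~\ref{cor:MPcapMP-compl} with Propositions~\ref{prop:MP}, \ref{prop:MPcapMP} and \ref{prop:N-of-MPcapMP}, with the only nontrivial step being the same cancellation in $\E N(K_n^{(1)}\cap K_n^{(2)})-\E V_0(K_n^{(1)}\cap K_n^{(2)})=(Mp^2)^n\bigl(-1+p^2\tfrac{M-1}{M-p^2}\bigl[1-\bigl(\tfrac{p^2}{M}\bigr)^n\bigr]\bigr)$ that you identify. The $n=0$ sanity check is a nice addition not present in the paper.
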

\begin{proof}
   Combine Corollary~\ref{cor:MPcapMP-compl} with Propositions~\ref{prop:MP}, \ref{prop:MPcapMP}, and \ref{prop:N-of-MPcapMP}.
   The two formulas for $V_1$ and also the one for $\E V_0(D_n^{(1)})$ follow at once. In case of $\E V_0(D_n^{(1)}\cap D_n^{(2)})$ observe that, for any $n\in\N_0$,
  \begin{align*}
\E &N(K_n^{(1)}\cap K_n^{(2)})-\E V_0(K_n^{(1)}\cap K_n^{(2)})\\
    &=(M p^2)^n\left(2-2 M^{-n}
-4p \frac{M-1}{M-p}\left[1-\left(\frac pM\right)^n\right]
+2p^2\frac{M-1}{M-p^2}\left[1-\left(\frac {p^2}M\right)^n\right]\right)\\
    &\quad -(Mp^2)^n\left(3-2 M^{-n}
-4p \frac{M-1}{M-p}\left[1-\left(\frac pM\right)^n\right]
+p^2\frac{M-1}{M-p^2}\left[1-\left(\frac {p^2}M\right)^n\right]\right)\\
&=(Mp^2)^n\left(-1+ p^2\frac{M-1}{M-p^2}\left[1-\left(\frac {p^2}M\right)^n\right]\right)\\
&=(Mp^2)^n\left(\frac{M(p^2-1)}{M-p^2}-\frac{(M-1)p^2}{M-p^2}\left(\frac {p^2}M\right)^n\right)
  \end{align*}
 and thus $\E V_0(D_n^{(1)}\cap D_n^{(2)})$ equals
 \begin{align*}
   2\E V_0(K_n^{(1)})+1-4p^n+2p^{2n}
    +(Mp^2)^n\left(-1+ p^2\frac{M-1}{M-p^2}\left[1-\left(\frac {p^2}M\right)^n\right]\right).\qquad \qedhere
 \end{align*}
\end{proof}

It is now easy to derive explicit expressions for the limit functionals $\osV^c_k(K)=\lim_{n\to\infty} r^{(D-k)n}\E V_k(D_{n})$ of fractal percolation $K$ in $\R$ {for all possible parameters}.
 \begin{cor} \label{cor:5.2}
    {Let $K$ be a fractal percolation on the interval $I=[0,1]$ with parameters $M\in\N_{\geq 2}$ and $p\in(0,1]$. If $K_n$ is the $n$-th construction step and $D_{n}:=\overline{I\setminus K_{n}}$, then
    \begin{align*}
      \lim_{n\to\infty} \E V_1(D_{n})=\begin{cases}1, \text{ for }  p\in(0,1),& (\text{while } \osV^c_1(K)=
      \infty),\\ 0, \text{ for } p=1, &(\text{which equals } \osV^c_1(K) \text{ in this case}). \end{cases}
        \end{align*}
        Moreover, 
  \begin{align*}
    \osV^c_0(K)=\begin{cases}\frac{M(1-p)}{M-p}, & \text{ for } p\in(1/M,1], \quad (\text{which equals } \osV_0(K), \text{ cf.\ Corollary~\ref{cor:Vk-dim1}}),\\
    \frac{2M+1}{M+1}, & \text{ for } p=1/M,\\ \infty,& \text{ for } p\in(0,1/M).\end{cases}
    \end{align*}
    }
  \end{cor}
  \begin{proof}
   { Since here $D=\frac{\log Mp}{\log M}$ (cf.\ \eqref{eq:dimF}), which implies $Mp=r^{-D}$, we infer from Theorem~\ref{thm:MPcapMP-compl} that, for any $n\in\N$, $r^{(D-1)n}\E V_1(D_{n})=p^{-n}-1$ and
    \begin{align*}
     r^{Dn}\E V_0(D_{n})= 1-p\frac{M-1}{M-p}\left[1-\left(\frac {p}M\right)^n\right]+(Mp)^{-n}(1-2p^n).
  \end{align*}
  Letting now $n\to\infty$, the stated limits follow at once.}
  \end{proof}

\paragraph*{The case $d=2$.} 
In $\R^2$, formula \eqref{eq:Zk-limit-general2} in Theorem~\ref{thm:Zk-limit-general2} reduces to
  \begin{align}\label{eq:Yk-limit-dim2-1}
    \osV^c_k(F) 
    =q_{2,k} \frac{M^{2-k}(1-p)}{M^{2-k}p-1}-E_1-E_2+E_3-E_4,
  \end{align}
  where
  \begin{align*}
     E_1&:=2M(M-1)\sum_{n=1}^\infty  r^{n(D-k)} \E V_k(C_n^1\cap C_n^4),\notag\\
    E_2&:=2(M-1)^2 \sum_{n=1}^\infty r^{n(D-k)} \E V_k(C_n^1\cap C_n^2), \notag \\
     E_3&:=4(M-1)^2 \sum_{n=1}^\infty r^{n(D-k)} \E V_k(C_n^1\cap C_n^2\cap C_n^3), \notag \\
      E_4&:=(M-1)^2 \sum_{n=1}^\infty r^{n(D-k)} \E V_k\left(\bigcap_{j=1}^4 C_{n}^j\right)
      .\notag
  \end{align*}
  Here the sets $C_n^1, \ldots, C_n^4$ are four of the $M^2$ sets $C_{n}^j=\overline{J_j\setminus F_n^j}$ chosen such that the corresponding sets $J^j$, $j=1,\ldots,4$ intersect in a point $x$ and are numbered as indicated in Figure~\ref{fig:J_j}. The factor in front of the summation in each $E_i$ indicates how many times this particular intersection configuration occurs in the union $C_{n}=\bigcup_{j=1}^{M^2} C_{n}^j$ taking into account all symmetries.

{Theorem~\ref{thm:Zk-limit-general2} asserts that, for $k=0$, formula \eqref{eq:Yk-limit-dim2-1} is valid for all $p\in(1/M^2,1]$, and for $k=1$ for all $p\in(1/M,1]$.} 
    While for $E_1$ we will again employ the one-dimensional case, the last three summands $E_2, E_3$ and $E_4$ vanish for $k=1$, since the involved intersections contain at most one point. For $k=0$ these terms can be obtained by direct inspection of the intersections of the  $C_{n}^j$.

    \begin{lem} \label{lem:E2-E4}
    Suppose $p>1/M^2$. Then, for $k=0$, the terms $E_2, E_3$ and $E_4$ in \eqref{eq:Yk-limit-dim2-1} are given by
    \begin{align*}
      E_2&= 2(M-1)^2\left(\frac{1}{M^2p-1}-\frac 2{M^2-1}+\frac {p}{M^2-p}\right),\\
      E_3&=4(M-1)^2\left(\frac{1}{M^2p-1}-\frac 3{M^2-1}+\frac {3p}{M^2-p}-\frac{p^2}{M^2-p^2}\right), \text{ and}\\
      E_4&= (M-1)^2\left(\frac{1}{M^2p-1}-\frac 4{M^2-1}+\frac {6p}{M^2-p}-\frac{4p^2}{M^2-p^2}+\frac{p^3}{M^2-p^3}\right).
    \end{align*}
    \end{lem}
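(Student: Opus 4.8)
The plan is to exploit that $E_2$, $E_3$ and $E_4$ involve only the \emph{corner} configurations (types 2, 3 and 4 in Figure~\ref{fig:J_j}), for which the relevant intersection of the basic cubes $J_j$ reduces to a single point $x$. Hence $\bigcap_{j\in T} C_n^j\subseteq\{x\}$ for the corresponding index set $T$, and for $k=0$ the intrinsic volume degenerates to a product of indicators,
$$
V_0\Big(\bigcap_{j\in T} C_n^j\Big)=\prod_{j\in T}\ind\{x\in C_n^j\}.
$$
The first step is therefore to compute $\P(x\in C_n^j)$ for a single cube. I would argue that $x\in C_n^j=\cl{J_j\setminus F_n^j}$ if and only if the unique level-$n$ subcube $Q$ of $J_j$ having $x$ as a corner does \emph{not} belong to $F_n^j$: if $Q$ survives, then a neighbourhood of $x$ within $J_j$ is entirely covered by $F_n^j$ and $x$ falls out of the closure, whereas if $Q$ is discarded then $Q\subseteq C_n^j$ and so $x\in C_n^j$. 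Since $Q\subseteq F_n^j$ requires $J_j$ to be kept in the first step and the appropriate corner subcube to be kept at each of the subsequent $n-1$ levels, one gets $\P(Q\subseteq F_n^j)=p^n$, hence $\P(x\in C_n^j)=1-p^n$.

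The second step uses independence. For $j\in T$ the events $\{x\in C_n^j\}$ are governed by the Bernoulli variables attached to the distinct cubes $J_j$, which form disjoint (hence independent) families in the construction tree of Section~\ref{sec:2}. Therefore
$$
\E V_0\Big(\bigcap_{j\in T} C_n^j\Big)=(1-p^n)^{|T|},
$$
which equals $(1-p^n)^2$, $(1-p^n)^3$ and $(1-p^n)^4$ for the three configurations. Expanding by the binomial theorem writes each expectation as a finite combination of powers $p^{mn}$.

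Finally I would substitute into the definitions of $E_2$, $E_3$, $E_4$ and evaluate the geometric series. Recalling that for $k=0$ we have $r^{n(D-k)}=r^{nD}$ and $r^D=1/(M^2p)$ (from $M^2p=r^{-D}$), each summand is
$$
\sum_{n=1}^\infty r^{nD}p^{mn}=\sum_{n=1}^\infty\Big(\tfrac{p^{m-1}}{M^2}\Big)^{n},
$$
a geometric series equal to $1/(M^2p-1)$ for $m=0$ and $p^{m-1}/(M^2-p^{m-1})$ for $m\geq 1$. Collecting the binomial coefficients against these values reproduces exactly the three asserted formulas. The hypothesis $p>1/M^2$ is needed precisely to ensure $M^2p>1$ so that the leading ($m=0$) series converges; all remaining ratios $p^{m-1}/M^2$ are automatically below $1$.

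The arithmetic here is routine; the step demanding the most care is the geometric reduction of the first paragraph, namely the verification that membership of the corner point $x$ in the closed complement $C_n^j$ is controlled \emph{solely} by the survival of the single corner subcube $Q$. One has to check that the neighbouring level-$n$ subcubes of $J_j$ do not contain $x$, so that taking the closure in the definition of $C_n^j$ neither creates nor removes $x$ for any reason other than the status of $Q$. Once this is established, everything else is a binomial expansion followed by summation of geometric series.
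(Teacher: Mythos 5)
Your proposal is correct and follows essentially the same route as the paper: reduce each corner configuration to the single point $x$, observe that $x\in C_n^j$ precisely when the corner subcube fails to survive all $n$ levels (probability $1-p^n$), use independence across the distinct $J_j$ to get $\E V_0\bigl(\bigcap_{j\in T}C_n^j\bigr)=(1-p^n)^{|T|}$, and then expand binomially and sum the geometric series using $r^{-D}=M^2p$. Your extra check that membership of $x$ in $C_n^j$ is governed solely by the corner subcube is a sound (and slightly more careful) justification of the step the paper states directly.
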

    \begin{proof}
     In all three intersection configurations of the sets $C_{n}^j$ considered here, the intersection contains at most one point, $x$, cf.\ Figure~\ref{fig:J_j}.
    For each of the sets $C_{n}^j$ to contain  $x$ it is necessary, that at least in one of the sets $F^j_k$, $k=1,\ldots, n$ the $k$-th level subsquare intersecting $x$ is discarded (which happens with probability $1-p^n$). Thus, by independence, we obtain for the intersections of $\ell=2,3$ or $4$ of these  sets
     \begin{align*}
       \E V_0\Big(\bigcap_{j=1}^\ell C_{n}^j\Big)=\P\Big(\bigcap_{j=1}^\ell C_{n}^j=\{x\}\Big)=(1-p^n)^\ell,
     \end{align*}
     and therefore
     \begin{align} \label{eq:lem:E2-E4}
       \sum_{n=1}^\infty r^{nD} \E V_0\Big(\bigcap_{j=1}^\ell C_{n}^j\Big)&=\sum_{k=0}^\ell \binom{l}{k} (-1)^k\frac{p^{k-1}}{M^2-p^{k-1}}.
     \end{align}

     Indeed, employing the binomial theorem and the relation $r^{-D}=M^2p$, we get
     \begin{align*}
       \sum_{n=1}^\infty r^{nD} \E V_0\Big(\bigcap_{j=1}^\ell C_{n}^j\Big)&=\sum_{n=1}^\infty (M^2p)^{-n}(1-p^n)^\ell=\sum_{k=0}^\ell \binom{l}{k}(-1)^k \sum_{n=1}^\infty \left(\frac{p^{k-1}}{M^2}\right)^{n},
     \end{align*}
     where in the last expression all the geometric series converge due to the assumption $p>1/M^2$. Computing these series yields the expression stated in \eqref{eq:lem:E2-E4}. Finally, the assertion of the lemma follows by plugging \eqref{eq:lem:E2-E4} into the expressions for $E_2$, $E_3$ and $E_4$ given by \eqref{eq:Yk-limit-dim2-1}.
    \end{proof}

   The expressions derived in Theorem~\ref{thm:MPcapMP-compl} for intersections of fractal percolations in one dimension will now be used to compute the expected intrinsic volumes of the (1-dimensional) intersections $C_n^1\cap C_n^4$ appearing in the term $E_1$ in formula \eqref{eq:Yk-limit-dim2-1} for  fractal percolation in $\R^2$.
  \begin{prop} \label{prop:CncapCn}
    Let $F$ be a fractal percolation in $\R^2$ with parameters $M\in\N_{\geq 2}$ and $p\in(0,1]$. 
Then, for any $n\in\N$,
  \begin{align*}
       \E V_1(C_n^1\cap C_n^4)&=\frac 1M\left(1-2p^{n}+p^{2n}\right) \qquad \text{ and }\\
       \E V_0(C_n^1\cap C_n^4)&=2(Mp)^n\left(\frac{1-p}{M-p}+\frac{M-1}{M-p}\left(\frac {p}M\right)^{n}\right)+1-4p^{n}+2p^{2n}\\&\quad-(Mp^2)^n\left(\frac{1-p^2}{M-p^2}+\frac{M-1}{M-p^2}\left(\frac {p^2}M\right)^{n}\right).
  \end{align*}
\end{prop}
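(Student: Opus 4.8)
The plan is to mirror the reduction used in the proof of Theorem~\ref{thm:Vk-limit-dim2} for the sets $F_n^1\cap F_n^4$, replacing the surviving cubes by their complements. First I would observe that, since $C_n^1\subseteq J_1$ and $C_n^4\subseteq J_4$, the intersection $C_n^1\cap C_n^4$ is contained in the common edge $e:=J_1\cap J_4$ and is therefore a one-dimensional object. The crucial step is to establish the distributional identity
\begin{align*}
  C_n^1\cap C_n^4 = \psi\big(\widetilde D_{n-1}^{(1)}\cap\widetilde D_{n-1}^{(2)}\big),
\end{align*}
analogous to \eqref{eq:MpcapMP-dim1-2}, where $\psi$ is the similarity mapping $[0,1]$ onto $e$ with ratio $r=1/M$, the sets $\widetilde K_{n-1}^{(1)},\widetilde K_{n-1}^{(2)}$ are independent copies as in \eqref{eq:basic-sim} (equal to $K_{n-1}^{(i)}$ with probability $p$ and empty otherwise, accounting for the survival of $J_1$ resp.\ $J_4$ in the first construction step), and $\widetilde D_{n-1}^{(i)}:=\overline{[0,1]\setminus \widetilde K_{n-1}^{(i)}}$. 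To justify this I would argue that the trace of $C_n^j$ on the edge $e$ corresponds, via $\psi$, to the one-dimensional complement $\widetilde D_{n-1}^{(j)}$: a point of $e$ lies in $C_n^j$ precisely when the adjacent level-$n$ subcube of $J_j$ is discarded, and the survival pattern of the row of subcubes adjacent to $e$, projected onto $e$, is exactly a one-dimensional fractal percolation. A short check at the shared endpoints of sub-edges shows that taking closures in the plane and on the line yields the same set, so the identity holds exactly.

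With this identity in hand, homogeneity, motion invariance and independence of the ambient dimension of the intrinsic volumes give $\E V_k(C_n^1\cap C_n^4)=r^k\,\E V_k(\widetilde D_{n-1}^{(1)}\cap\widetilde D_{n-1}^{(2)})$. I would then condition on which of $J_1,J_4$ survive the first step. In the four cases the set $\widetilde D_{n-1}^{(1)}\cap\widetilde D_{n-1}^{(2)}$ equals $D_{n-1}^{(1)}\cap D_{n-1}^{(2)}$ (probability $p^2$), $D_{n-1}^{(1)}$ or $D_{n-1}^{(2)}$ (total probability $2p(1-p)$, which by symmetry contribute equally), or the full interval $[0,1]$ (probability $(1-p)^2$), whence
\begin{align*}
  \E V_k\big(\widetilde D_{n-1}^{(1)}\cap\widetilde D_{n-1}^{(2)}\big)
  = p^2\,\E V_k\big(D_{n-1}^{(1)}\cap D_{n-1}^{(2)}\big)+2p(1-p)\,\E V_k\big(D_{n-1}^{(1)}\big)+(1-p)^2\,V_k([0,1]).
\end{align*}
All quantities on the right are now available: $V_k([0,1])=1$ for $k\in\{0,1\}$, and the expectations $\E V_k(D_{n-1}^{(1)})$ and $\E V_k(D_{n-1}^{(1)}\cap D_{n-1}^{(2)})$ are given explicitly by Theorem~\ref{thm:MPcapMP-compl}.

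It then remains to substitute these formulas (with $n$ replaced by $n-1$) and simplify. For $k=1$ the computation is short: using $\E V_1(D_{n-1}^{(1)})=1-p^{n-1}$ and $\E V_1(D_{n-1}^{(1)}\cap D_{n-1}^{(2)})=1-2p^{n-1}+p^{2(n-1)}$, the terms telescope and the prefactor $r=1/M$ yields $\tfrac1M(1-2p^n+p^{2n})$. For $k=0$ the same substitution produces the claimed expression after collecting the geometric factors $(Mp)^{n}$, $(Mp^2)^{n}$, $p^n$ and $p^{2n}$; a convenient consistency check is $n=1$, where the identity forces $\E V_0(C_1^1\cap C_1^4)=(1-p)^2$, which the closed form indeed returns. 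The main obstacle is the rigorous justification of the distributional identity for the complements --- in particular the bookkeeping of the extra ``$0$-th step'' encoded in the tilde sets and the verification that the planar and one-dimensional closures agree at the subcube interfaces; once this is settled, the remainder is the routine (if lengthy) algebra of combining the one-dimensional formulas from Theorem~\ref{thm:MPcapMP-compl}.
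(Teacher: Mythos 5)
Your proposal is correct and follows essentially the same route as the paper: the distributional identity $C_n^1\cap C_n^4=\psi(\hat D_{n-1}^{(1)}\cap \hat D_{n-1}^{(2)})$ (your $\widetilde D^{(i)}=\overline{I\setminus\widetilde K^{(i)}}$ is exactly the paper's $\hat D^{(i)}$, equal to $D^{(i)}$ with probability $p$ and to $I$ otherwise), followed by conditioning on the survival of $J_1,J_4$ and substituting the formulas of Theorem~\ref{thm:MPcapMP-compl}. Your $n=1$ consistency check $\E V_0(C_1^1\cap C_1^4)=(1-p)^2$ is a nice addition but the argument is otherwise the paper's own.
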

\begin{proof}
  Let $K^{(1)}, K^{(2)}$ be two independent fractal percolations on the interval $I=[0,1]$ with the same parameters $M$ and $p$ as $F$ and independent of $F$. For $n\in\N_0$, let $K_n^{(i)}$ be the $n$-th step of the construction of  $K^{(i)}$, $i=1,2$ and let $D_n^{(i)}:=\cl{I\setminus K_n^{(i)}}$ (just as in Corollary~\ref{cor:MPcapMP-compl}). Denote by $\widetilde K_n^{(i)}$, $i=1,2$, the random set which equals $K_n^{(i)}$ with probability $p$ and is empty otherwise.
  Recalling from \eqref{eq:MpcapMP-dim1-2} that in distribution
  $$
  F_n^1\cap F_n^4=\psi (\widetilde K_{n-1}^{(1)}\cap \widetilde K_{n-1}^{(2)}),
  $$
  we infer that also the following equation holds in distribution, since $C_{n}^j$ is determined by $F_n^j$ and similarly $D_{n-1}^{(i)}$ is determined by $K_{n-1}^{(i)}$. For each $n\in\N$, we have
  \begin{align}
     \label{eq:CncapCn} C_n^1\cap C_n^4=\psi (\hat D_{n-1}^{(1)}\cap \hat D_{n-1}^{(2)}),
  \end{align}
  where $\hat D^{(i)}_n$ is the random set which equals $D^{(i)}_n$ with probability $p$ and $I$ with probability $1-p$. This implies for each $n\in\N_0$,
  \begin{align*}
    \E V_0(C^1_{n+1}\cap C^4_{n+1})&=\E V_0(\hat D_n^{(1)}\cap \hat D_n^{(2)})\\
    &=p^2 \E V_0(D_n^{(1)}\cap D_n^{(2)})+ p(1-p) \E V_0(D_n^{(1)}\cap I)\\
    &\qquad +p(1-p) \E V_0(I\cap D_n^{(2)})+(1-p)^2 \E V_0(I\cap I)\\
    &=p^2 \E V_0(D_n^{(1)}\cap D_n^{(2)})+2 p(1-p) \E V_0(D_n^{(1)})+(1-p)^2.
  \end{align*}
Employing now the formulas derived in Theorem~\ref{thm:MPcapMP-compl} for $\E V_0(D_n^{(1)}\cap D_n^{(2)})$ and $\E V_0(D_n^{(1)})$,
we obtain for each $n\in\N_0$
\begin{align*}
   \E &V_0\left(C^1_{n+1}\cap C^4_{n+1}\right)\\
   &=p^2 \left(
  2 (Mp)^n\left(1-p\frac{M-1}{M-p}\left[1-\left(\frac pM\right)^n\right]\right)+1-4p^n+2p^{2n}\right.\\
    &\qquad \left. +(Mp^2)^n\left(-1+p^2\frac{M-1}{M-p^2}\left[1-\left(\frac {p^2}M\right)^n\right]\right)
   \right)\\
   &\qquad +2p(1-p)\left((Mp)^n\left(1-p\frac{M-1}{M-p}\left[1-\left(\frac {p}M\right)^n\right]\right)+1-2p^n \right) +(1-p)^2\\
   &=2p (Mp)^n\left(1-p\frac{M-1}{M-p}\left[1-\left(\frac {p}M\right)^n\right]\right)+1-4p^{n+1}+2p^{2(n+1)}\\
   &\qquad +p^2 (Mp^2)^n\left(-1+p^2\frac{M-1}{M-p^2}\left[1-\left(\frac {p^2}M\right)^n\right]\right),
\end{align*}
 where we combined some of the terms to get to the last expression. Replacing now $n+1$ by $n$, this simplifies to
 \begin{align*}
   \E V_0\left(C_n^1\cap C_n^4\right)&=\frac 2M (Mp)^n\left(1-p\frac{M-1}{M-p}\left[1-\left(\frac {p}M\right)^{n-1}\right]\right)+1-4p^{n}+2p^{2n}\\
   &\qquad +\frac 1M (Mp^2)^n\left(-1+p^2\frac{M-1}{M-p^2}\left[1-\left(\frac {p^2}M\right)^{n-1}\right]\right)\\
   &=2(Mp)^n\left(\frac{1-p}{M-p}+\frac{M-1}{M-p}\left(\frac {p}M\right)^{n}\right)+1-4p^{n}+2p^{2n}\\
   &\qquad-(Mp^2)^n\left(\frac{1-p^2}{M-p^2}+\frac{M-1}{M-p^2}\left(\frac {p^2}M\right)^{n}\right),
\end{align*}
 for any $n\in\N$, completing the proof of the formula for $V_0$ in Proposition~\ref{prop:CncapCn}. The formula for $V_1$ follows similarly from \eqref{eq:CncapCn} using the corresponding formulas from Theorem~\ref{thm:MPcapMP-compl}:
\begin{align*}
    \E V_1&(C^1_{n+1}\cap C^4_{n+1})=\E V_1(\psi(\hat D_n^{(1)}\cap \hat D_n^{(2)}))=\frac 1M \E V_1(\hat D_n^{(1)}\cap \hat D_n^{(2)})\\
        &=\frac 1M\left( p^2 \E V_1(D_n^{(1)}\cap D_n^{(2)})+2 p(1-p) \E V_1(D_n^{(1)})+(1-p)^2\right)\\
        &=\frac 1M\left(p^2\left[1-2p^n+p^{2n}\right]+2 p(1-p) \left[1-p^n\right]+(1-p)^2\right)\\
        &=\frac 1M\left(1-2p^{n+1}+p^{2(n+1)}\right). \qedhere 
  \end{align*}
\end{proof}

    \begin{cor} \label{cor:E1}
    If $p>1/M^2$, then for $k=0$ the term $E_1$ in \eqref{eq:Yk-limit-dim2-1} is given by
    \begin{align*}
      E_1&=\frac{2M(M-1)}{M-p}\left(\frac{2(1-p)}{M-1}+\frac{2(M-1)p}{M^2-p}-\frac{p(1-p^2)}{M-p^2}\right)\\
   &\quad-\frac{2M(M-1)^2 p^3}{(M-p^2)(M^2-p^3)}
   +\frac{2M(M-1)}{M^2p-1}-\frac {8M}{M+1}+\frac{4M(M-1)p}{M^2-p}.
    \end{align*}
    Similarly, if $p>1/M$, then for $k=1$, $E_1=2(M-1)\left(\frac 1{Mp-1}-\frac 2{M-1}+\frac p{M-p}\right)$.
    \end{cor}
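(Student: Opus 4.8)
The plan is to prove this by direct substitution and summation: I would insert the closed forms for $\E V_k(C_n^1\cap C_n^4)$ from Proposition~\ref{prop:CncapCn} into the definition of $E_1$ in \eqref{eq:Yk-limit-dim2-1}, replace the rescaling factor $r^{n(D-k)}$ by its explicit value, and evaluate the resulting finite collection of geometric series term by term. The first thing to record is that in $\R^2$, since $D=\log(M^2p)/\log M$, one has $r^{-D}=M^D=M^2p$, so that $r^{nD}=(M^2p)^{-n}$ and $r^{n(D-1)}=(Mp)^{-n}$.

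For $k=1$ this immediately settles the claim. Proposition~\ref{prop:CncapCn} gives $\E V_1(C_n^1\cap C_n^4)=\frac1M(1-2p^n+p^{2n})$, so that $E_1=2(M-1)\sum_{n\ge1}(Mp)^{-n}(1-2p^n+p^{2n})$. This splits into three geometric series with ratios $(Mp)^{-1}$, $M^{-1}$ and $p/M$; summing them (the first requiring $Mp>1$, i.e.\ $p>1/M$, which is exactly the stated hypothesis) yields $2(M-1)\big(\tfrac1{Mp-1}-\tfrac2{M-1}+\tfrac p{M-p}\big)$, as asserted.

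For $k=0$ the mechanism is identical but the expression for $\E V_0(C_n^1\cap C_n^4)$ expands into seven summands of the form $c\,\lambda^n$. Here I would first simplify the two ``mixed'' products, $(Mp)^n(p/M)^n=p^{2n}$ and $(Mp^2)^n(p^2/M)^n=p^{4n}$, so that the seven bases become $Mp$, $p^2$, $1$, $p$, $p^2$, $Mp^2$ and $p^4$. Multiplying each by $(M^2p)^{-n}$ produces geometric series with ratios $1/M$, $p/M^2$, $1/(M^2p)$, $1/M^2$, $p/M^2$, $p/M$ and $p^3/M^2$; all lie in $(0,1)$ under the hypothesis, the binding constraint being $M^2p>1$, which arises from the constant summand $+1$ and is precisely the assumption $p>1/M^2$. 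Summing the seven series and multiplying through by the prefactor $2M(M-1)$ gives seven explicit rational terms.

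The only remaining task is to regroup these into the stated shape, and this is also the sole source of friction. The three terms carrying a factor $(M-p)^{-1}$ (coming from the summands $\tfrac{2(1-p)}{M-p}(Mp)^n$, $\tfrac{2(M-1)}{M-p}p^{2n}$ and $-\tfrac{1-p^2}{M-p^2}(Mp^2)^n$) collect into the bracketed expression $\tfrac{2M(M-1)}{M-p}(\cdots)$ on the first line; the constant summand $+1$ contributes $\tfrac{2M(M-1)}{M^2p-1}$; the summand $-4p^n$ simplifies via $M^2-1=(M-1)(M+1)$ to $-\tfrac{8M}{M+1}$; the summand $+2p^{2n}$ gives $\tfrac{4M(M-1)p}{M^2-p}$; and the $p^{4n}$ term supplies $-\tfrac{2M(M-1)^2p^3}{(M-p^2)(M^2-p^3)}$. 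No conceptual obstacle arises: the entire argument is bookkeeping, and the only place demanding genuine care is tracking signs and denominators across the seven series and correctly collapsing the two mixed products into pure powers of $p$.
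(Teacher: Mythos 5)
Your proposal is correct and follows exactly the paper's own route: multiply the closed forms from Proposition~\ref{prop:CncapCn} by $r^{n(D-k)}=(M^2p)^{-n}$ (resp.\ $(Mp)^{-n}$), sum the resulting geometric series, and regroup. Your bookkeeping of the seven bases and their ratios, and the identification of which summands produce which terms of the stated formula, all check out.
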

    \begin{proof}
      To determine $E_1$ for $k=0$, we multiply the expression derived in Proposition~\ref{prop:CncapCn} for $\E V_0\left(C_n^1\cap C_n^4\right)$ by $r^{Dn}=(M^2p)^{-n}$ and sum over $n$ to obtain
\begin{align} \label{eq:cor:E1}
   E_1&=2M(M-1)\sum_{n=1}^\infty (M^2p)^{-n}\left[2(Mp)^n\left(\frac{1-p}{M-p}+\frac{M-1}{M-p}\left(\frac {p}M\right)^{n}\right)\right.\\
   &\notag \qquad\quad\left.-(Mp^2)^n\left(\frac{1-p^2}{M-p^2}+\frac{M-1}{M-p^2}\left(\frac {p^2}M\right)^{n}\right)+1-4p^{n}+2p^{2n}\right],
\end{align}
and the expression stated above is then derived by computing the various geometric series (which do all converge due to the assumption $p>1/M^2$, justifying thus in particular the above interchange of summations) and combining some of the terms. For $k=1$, the stated expression for $E_1$ follows similarly by multiplying the expression for $\E V_1\left(C_n^1\cap C_n^4\right)$ from Proposition~\ref{prop:CncapCn} by $r^{(D-1)n}=(Mp)^{-n}$ and summing over $n$. The involved series converge due to the assumption $p>1/M$.
    \end{proof}

Now we are ready to prove Theorem~\ref{thm:Vck-limit-dim2}. For convenience, we repeat the statement here, and we add a corresponding formula for the limit $\osV^c_1(F)$ of the rescaled boundary lengths:

 \begin{prop} \label{prop:Vck-limit-dim2} 
Let $F$ be a fractal percolation in $\R^2$ with parameters $M\in\N_{\geq 2}$ and $p\in [0,1]$.
  Then, for any $p>1/M^2$,
  \begin{align*}
       \osV^c_0(F)&= M^2(1-p)\frac{p^3+(M-1)p^2+(M-1)p-M}{(M^2-p^3)(M-p)}.
  \end{align*}
  Moreover, for any $p>1/M$,
  \begin{align*}
       \osV^c_1(F)&= 2M \frac{1-p}{M-p}\quad\left(=\osV_1(F), \quad \text{ cf.~\eqref{thm:Vk-limit-dim2}}\right).
  \end{align*}
\end{prop}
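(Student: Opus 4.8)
The plan is to read off both formulas from the general two-dimensional identity \eqref{eq:Yk-limit-dim2-1}, namely
$\osV^c_k(F)=q_{2,k}\frac{M^{2-k}(1-p)}{M^{2-k}p-1}-E_1-E_2+E_3-E_4$,
by substituting the explicit values of the four sums $E_1,\dots,E_4$ obtained in Corollary~\ref{cor:E1} and Lemma~\ref{lem:E2-E4}, together with the elementary constants $q_{2,0}=V_0([0,1]^2)=1$ and $q_{2,1}=V_1([0,1]^2)=2$, and then simplifying. Since each $E_i$ is a finite expression (all the geometric series defining them converge under the stated hypotheses), formula \eqref{eq:Yk-limit-dim2-1} is valid in the asserted ranges of $p$ by Remark~\ref{rem:conv-of-series}, so no convergence questions remain and the proof is reduced to algebra.

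I would first dispose of the case $k=1$, which is short. The intersections entering $E_2,E_3,E_4$ consist of at most a single point, hence their expected boundary length vanishes and these three terms drop out; only the $k=1$ form of $E_1$ from Corollary~\ref{cor:E1} survives. Thus
\begin{align*}
  \osV^c_1(F)=\frac{2M(1-p)}{Mp-1}-2(M-1)\left(\frac{1}{Mp-1}-\frac{2}{M-1}+\frac{p}{M-p}\right).
\end{align*}
The two terms carrying the denominator $Mp-1$ combine to $\frac{2M(1-p)-2(M-1)}{Mp-1}=\frac{2(1-Mp)}{Mp-1}=-2$; adding the constant $+4$ coming from $-2(M-1)\cdot(-\tfrac{2}{M-1})$ gives $+2$, and subtracting $\frac{2(M-1)p}{M-p}$ leaves $2-\frac{2(M-1)p}{M-p}=\frac{2M(1-p)}{M-p}$, which is the claimed value (and indeed equals $\osV_1(F)$).

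For the case $k=0$ all four sums contribute, and the informative first step is to check that the apparent pole at $p=1/M^2$ disappears. Collecting every term with denominator $M^2p-1$, the edge term contributes $\frac{M^2(1-p)}{M^2p-1}$, while in $-E_1-E_2+E_3-E_4$ the coefficients of $\frac{(M-1)^2}{M^2p-1}$ from $E_2,E_3,E_4$ sum to $-2+4-1=1$ and $-E_1$ adds $-2M(M-1)$, giving $(M-1)^2-2M(M-1)=-(M^2-1)$. Hence the total contribution of this denominator is $\frac{M^2(1-p)-(M^2-1)}{M^2p-1}=\frac{1-M^2p}{M^2p-1}=-1$. After this cancellation the surviving terms carry only the denominators $M^2-1$, $M^2-p$, $M^2-p^2$, $M^2-p^3$, $M-p$ and $M-p^2$, and bringing them to the common denominator $(M^2-p^3)(M-p)$ produces the stated closed form for $\osV^c_0(F)$ with numerator $M^2(1-p)\bigl(p^3+(M-1)p^2+(M-1)p-M\bigr)$.

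The main obstacle is purely computational: the $k=0$ simplification requires combining on the order of a dozen partial fractions over six distinct denominators and verifying that the many cross terms collapse to the compact factor $p^3+(M-1)p^2+(M-1)p-M$. The cancellation of the $M^2p-1$ contributions described above is the one genuinely structural step and serves as the natural checkpoint (it also confirms that the final expression is regular at $p=1/M^2$); everything past it is bookkeeping best organized denominator by denominator and readily confirmed by a symbolic computation.
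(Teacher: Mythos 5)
Your proposal is correct and follows exactly the paper's own route: the paper likewise proves Proposition~\ref{prop:Vck-limit-dim2} by inserting the expressions for $E_1,\ldots,E_4$ from Corollary~\ref{cor:E1} and Lemma~\ref{lem:E2-E4} into formula \eqref{eq:Yk-limit-dim2-1} (with $E_2=E_3=E_4=0$ for $k=1$) and simplifying. Your explicit verification that the $M^2p-1$ contributions collapse to $-1$ is a useful sanity check that the paper leaves implicit, and your algebra for both $k=0$ and $k=1$ checks out.
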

\begin{proof}[Proof of Proposition~\ref{prop:Vck-limit-dim2} and thus in particular of Theorem~\ref{thm:Vck-limit-dim2}]
All one has to do is to insert the expressions for $E_1,\ldots, E_4$ obtained in Lemma~\ref{lem:E2-E4} and Corollary~\ref{cor:E1} into formula \eqref{eq:Yk-limit-dim2-1} for $\osV^c_k(F)$.
For $k=0$, we obtain (recalling that $q_{2,0}=V_0(J)=1$)
\begin{align} \label{eq:Vck-limit-dim2-proof}
    \osV^c_0(F)&=\frac{M^{2}(1-p)}{M^{2}p-1}-E_1-E_2+E_3-E_4\\
   \notag &=\frac{M^{2}(1-p)}{M^{2}p-1}-\frac{2M(M-1)}{M-p}\left(\frac{2(1-p)}{M-1}+\frac{2(M-1)p}{M^2-p}-\frac{p(1-p^2)}{M-p^2}\right)\\
  \notag  &\quad+\frac{2M(M-1)^2 p^3}{(M-p^2)(M^2-p^3)}
   -\frac{2M(M-1)}{M^2p-1}+\frac {8M}{M+1}-\frac{4M(M-1)p}{M^2-p}\\
   \notag &\quad -E_2+E_3-E_4,
  \end{align}
where
\begin{align*}
   -E_2+E_3-E_4&=(M-1)^2\left(\frac{1}{M^2p-1}-\frac 4{M^2-1}+\frac {4p}{M^2-p}-\frac{p^3}{M^2-p^3}\right).
\end{align*}
Fortunately, this can be simplified to the expression stated above.
Similarly, we get for $k=1$ (taking into account that $E_2=E_3=E_4=0$ in this case)
  \begin{align*}
       \osV^c_1(F)&= 
       2M\frac{1-p}{Mp-1}-2(M-1)\left(\frac{1}{Mp-1}-\frac{2}{M-1}+\frac{p}{M-p}\right),
  \end{align*}
  which simplifies to the expression stated {above}. 
  This completes the proof. 
\end{proof}
\begin{rem} {(On the speed of convergence)} \label{rem:speed2}
  From the proof of Proposition~\ref{prop:Vck-limit-dim2}, we also get explicit expressions for the expected intrinsic volumes of the approximation sets $C_m$ for each $m\in\N$. To determine $\overline{v}^c_k(m):=r^{m(D-k)}\E V_k(C_m)$, $m\in\N$, it is enough to truncate all the sums in formula \eqref{eq:Yk-limit-dim2-1} after the $m$-th term (including the very first one which appears already in summed form in \eqref{eq:Yk-limit-dim2-1}, cf.~\eqref{eq:Yk-proof}).
  For $k=0$, we obtain for any $m\in\N$,
  \begin{align*}
  \overline{v}^c_0(m)&=\frac{1-p}p\sum_{n=0}^m r^{nD}-E_1(m)-2E_2(m)+4 E_3(m)-E_4(m),
\end{align*}
where $E_1(m)$, the truncated term corresponding to $E_1$, can be read off from equation~\eqref{eq:cor:E1} in the proof of Corollary~\ref{cor:E1}:
 \begin{align*}
     E_1(m)&:=
     2M(M-1)\sum_{n=1}^m \left[2\frac{1-p}{M-p}\left(\frac 1M\right)^n+2\frac{M-1}{M-p}\left(\frac {p}{M^2}\right)^{n}-\frac{1-p^2}{M-p^2}\left(\frac pM\right)^n\right.\\
   &\qquad\qquad\qquad\left.-\frac{M-1}{M-p^2}\left(\frac {p^3}{M^2}\right)^{n}+\left(\frac 1{M^2p}\right)^n - 4 \left(\frac 1{M^2}\right)^n +2\left(\frac p{M^2}\right)^n\right].
   \end{align*}
Similarly, $E_\ell(m)$, $\ell=2,3,4$, are derived by truncating the corresponding sums $E_\ell$ and computing the resulting finite geometric sums, cf.~\eqref{eq:lem:E2-E4}:
  \begin{align*}
          E_\ell(m)&:=(M-1)^2 \sum_{n=1}^m (M^2p)^{-n}(1-p^n)^\ell\\&=(M-1)^2\sum_{k=0}^\ell \binom{\ell}{k} (-1)^k\frac{p^{k-1}}{M^2-p^{k-1}}\left[1-\left(\frac{p^{k-1}}{M^2}\right)^m\right].
     \end{align*}
Computing all the finite geometric sums, we get for each of the terms in equation \eqref{eq:Vck-limit-dim2-proof} a corresponding one for $\overline{v}^c_0(m)$ with a factor of the form $\left(1-q^m\right)$ for a suitable $q$ (just as in the last line of the formula above). The constant terms add up to $\osV^c_0(F)$, such that we end up with the following exact expansion in $m$:
\begin{align} \label{eq:soc}
  \overline{v}^c_0(m)&=\osV^c_0(F)+\frac{4M(1-p)}{M-p} M^{-m} - \frac{2M(M-1)p(1-p^2)}{(M-p)(M-p^2)} M^{(D-3)m} + M^{-Dm}\notag\\
  &\qquad -4 M^{-2m}+\frac{4p(M-1)}{M-p} M^{(D-4)m}+ \tilde c M^{(3D-8)m},
\end{align}
where $\tilde c:=\frac{(M-1)p^3}{M^2-p^3}\frac{(M-1)(M-p^2)-2M(M-p)}{M-p^2}$.
It is easy to see that this sequence converges again exponentially fast to $\osV^c_0(F)$ as $m\to\infty$.
\end{rem}
\begin{rem} {(On fractal subdimensions)} \label{rem:subdim}
Multiplying \eqref{eq:soc} by $M^{Dm}$, we obtain an exact expansion for $\E V_0(C_m)$:
\begin{align*}
   \E V_0(C_m)&=\osV^c_0(F) M^{Dm}+\frac{4M(1-p)}{M-p} M^{(D-1)m} - \frac{2M(M-1)p(1-p^2)}{(M-p)(M-p^2)} M^{(2D-3)m}\\
  &\qquad + 1-4 M^{(D-2)m}+\frac{4p(M-1)}{M-p} M^{(2D-4)m}+ \tilde c M^{(4D-8)m}.
\end{align*}
Since $D<2$ for $p<1$, the last three terms vanish as $m\to\infty$. The remaining terms determine the \emph{subdimensions} of $F$ in the sense of \cite{MeckeSchoenhoefer15}. We obtain,
\begin{align*}
   \E V_0(C_m)
  &=\osV^c_0(F) M^{Dm}+c_2 M^{D_2 m} +c_3 M^{D_3 m}+1+o(1),
\end{align*}
as $m\to\infty$, where $c_2:=\frac{4M(1-p)}{M-p}$ is the amplitude of the first subdimension $D_2:=D-1$ and $c_3:=- \frac{2M(M-1)p(1-p^2)}{(M-p)(M-p^2)}$ is the amplitude of second subdimension $D_3:=2D-3=\frac{\log Mp^2}{\log M}$. Recall from Proposition~\ref{prop:dim_K1capK2} and Remark~\ref{rem:K1capK2}, that $D_3$ (which is positive for $p>1/\sqrt{M}$) is the dimension of the intersection of {two} copies of $F$ constructed in neighboring squares sharing a common side. Similarly $D_2$ (which is positive for $p>1/M$) is the dimension of a fractal percolation on an interval with the same parameters as $F$ or equally the dimension of $F\cap \partial[0,1]^2$. Hence two subdimensions appear for these random fractals as suggested by \cite{Knuefing05,MeckeSchoenhoefer15} and they carry geometric meaning as in the deterministic setting studied there. 
\end{rem}

\section{Proof of Propositions~\ref{prop:Rk-conv} and \ref{prop:Rk-conv2}} \label{sec:appendix}

Let $n\in\N$ and $W_1, W_2,\ldots$ be unions of subcubes of $J=[0,1]^d$ of level $n$. More precisely, if $\Omega^{1},\Omega^{2},\ldots$ are arbitrary subsets of $\{1,\ldots,M^d\}^n$, then we let
\begin{align} \label{eq:def-Wi}
   W_i:=\bigcup_{\sigma\in \Omega^i} J_\sigma,\qquad i\in\N.
\end{align}
Our first aim is to establish a general bound on the curvature of the intersection 
$W_1\cap W_2\cap\ldots \cap W_\ell$ for an arbitrary number $\ell\in\N_{\ge 2}$ of these sets. For this we will employ an estimate from \cite{W08}. Recall from \cite{W08} that for any finite family $\mathcal{X}=\{X_1,\ldots, X_m\}$ of sets, the \emph{intersection number} $\Gamma=\Gamma(\mathcal{X})$ is defined by
\begin{align*}
  \Gamma:=\max_{i\in\{1,\ldots,m\}}|\{j: X_j\cap X_i\neq\emptyset\}|.
\end{align*}
If $\Gamma$ is small compared to $m$, then the following estimate is particularly useful, which is a special case of \cite[Corollary 3.0.5]{W08} for a family of convex sets.
\begin{lem} 
\label{lem:ringcor}
Let $\{X_1,\ldots, X_m\}$ be a family of compact, convex subsets of $\R^d$ and let 
$\Gamma$ be its intersection number. 
Then, for any $k\in\{0,\ldots,d\}$,
\[C^\var_k(\bigcup_{j=1}^m X_j)\le m 2^\Gamma b_k,\]
where $b_k:=\max\{C_k(X_j):j=1,\ldots,m\}$.
\end{lem}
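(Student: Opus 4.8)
The plan is to derive the bound from the additive (inclusion–exclusion) representation of the curvature measures, combined with a purely combinatorial count that is controlled by the intersection number $\Gamma$. Since each $X_j$ is convex and the curvature measures extend additively to the convex ring $\sR^d$ (see \cite[Ch.~14.2]{SchneiderWeil08}), one has the identity of signed measures
\begin{equation*}
C_k\Big(\bigcup_{j=1}^m X_j,\cdot\Big)=\sum_{\emptyset\neq I\subseteq\{1,\ldots,m\}}(-1)^{|I|-1}C_k\Big(\bigcap_{j\in I}X_j,\cdot\Big),
\end{equation*}
where every intersection $\bigcap_{j\in I}X_j$ is again compact and convex, so that each summand on the right is well defined. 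Taking total variation norms and using that the coefficients $(-1)^{|I|-1}$ all have modulus $1$, the triangle inequality yields
\begin{equation*}
C_k^\var\Big(\bigcup_{j=1}^m X_j\Big)\le\sum_{\emptyset\neq I\subseteq\{1,\ldots,m\}}C_k^\var\Big(\bigcap_{j\in I}X_j\Big).
\end{equation*}

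Next I would exploit convexity to bound each term. For a convex body $K$ the curvature measure $C_k(K,\cdot)$ is a nonnegative measure, so its total variation equals the total mass $C_k^\var(K)=C_k(K,\R^d)=V_k(K)$. Applying this to $K=\bigcap_{j\in I}X_j$ and using monotonicity of intrinsic volumes on convex sets together with $\bigcap_{j\in I}X_j\subseteq X_i$ for any $i\in I$, each nonempty intersection contributes at most $b_k=\max_j C_k(X_j)$, while an empty intersection contributes $0$. Hence only index sets $I$ with $\bigcap_{j\in I}X_j\neq\emptyset$ enter the sum, each with weight at most $b_k$.

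The remaining task, which carries the actual content of the estimate, is to count these contributing index sets, and I would organize them by their minimal element. If $i=\min I$ and $\bigcap_{j\in I}X_j\neq\emptyset$, then in particular $X_i\cap X_j\neq\emptyset$ for every $j\in I$, so $I\setminus\{i\}$ is contained in $\{j>i:X_j\cap X_i\neq\emptyset\}$. Since the full set $\{j:X_j\cap X_i\neq\emptyset\}$ contains $i$ and has at most $\Gamma$ elements by the definition of the intersection number, there are at most $\Gamma-1$ indices available for $I\setminus\{i\}$, and therefore at most $2^{\Gamma-1}$ admissible sets $I$ with $\min I=i$. Summing over the $m$ possible minimal indices gives at most $m\,2^{\Gamma-1}\le m\,2^\Gamma$ contributing index sets, and multiplying by the per-term bound $b_k$ yields $C_k^\var(\bigcup_{j=1}^m X_j)\le m\,2^\Gamma b_k$, as claimed.

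The one point requiring genuine care is the validity of the inclusion–exclusion identity at the level of the (signed) curvature measures rather than merely for the scalar intrinsic volumes; this is precisely what the additive extension to $\sR^d$ provides, and is the reason the statement is quoted as a special case of \cite[Corollary 3.0.5]{W08}. Everything else is a direct monotonicity-and-counting argument, with the slightly sharper constant $2^{\Gamma-1}$ being available if desired.
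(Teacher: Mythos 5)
Your proof is correct, and it takes a more self-contained route than the paper. The paper's own proof is deliberately short: it only verifies that every intersection $X_I=\bigcap_{i\in I}X_i$ is convex with $C_k^{\var}(X_I)=C_k(X_I)\le b_k$ (by monotonicity and nonnegativity of intrinsic volumes on convex bodies) and then invokes \cite[Corollary 3.0.5]{W08}, which contains exactly the inclusion--exclusion-plus-counting argument you carry out by hand. You share the same key observation (convexity makes each summand nonnegative and bounded by $b_k$, and empty intersections drop out), but instead of citing the corollary you reprove its content: the inclusion--exclusion identity for curvature measures on the convex ring, the triangle inequality for total variation, and the count of contributing index sets organized by their minimal element, using that $\{j:X_j\cap X_i\neq\emptyset\}$ contains $i$ and has at most $\Gamma$ elements. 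That counting step is sound (the degenerate case $X_i=\emptyset$ causes no trouble, since then no admissible $I$ has $\min I=i$), and it even yields the slightly sharper constant $m\,2^{\Gamma-1}$. What your version buys is independence from the external reference; what the paper's version buys is brevity, since the combinatorics is already packaged in \cite{W08}. Either way the estimate used downstream (Lemma~\ref{lem:intersect-num} and Proposition~\ref{prop:intersect-MPs}) is unaffected.
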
 
\begin{proof}
  Since the $X_i$ are convex, any intersection $X_I:=\bigcap_{i\in I} X_i$, $I\subseteq\{1,\ldots,m\}$ is also convex and contained in any of the sets $X_i$, $i\in I$. Therefore, the monotonicity and positivity of the intrinsic volumes implies that $C_k^{\var}(X_I)=C_k(X_I)\leq \max_{i\in I} C_k(X_i)\leq b_k$. Thus (for $B:=\R^d$) the assumptions of \cite[Corollary 3.0.5]{W08} are satisfied with $b:=b_k$ and the assertion follows.
\end{proof}
Recall that $r=1/M$.
\begin{lem} \label{lem:intersect-num}
   There is a constant $c_{d,k}$ 
   such that for any $n\in\N$ and $\ell\in\N_{\geq 2}$ and any collection $W_1,\ldots,W_\ell$ of unions of cubes of level $n$ as defined in \eqref{eq:def-Wi} the following estimate holds
   \begin{align}\label{eq:lem-intersect-num1}
     C_k^{\var}(W_1\cap \ldots \cap W_\ell)\leq c_{d,k} |\Omega^{1}| r^{kn},
   \end{align}
   where $|\Omega^{1}|$ is the number of cubes in $W_1$. 
\end{lem}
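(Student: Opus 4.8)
The plan is to realize the intersection $W_1 \cap \cdots \cap W_\ell$ as a union of a controlled number of convex pieces and then to invoke Lemma~\ref{lem:ringcor}. The naive expansion $\bigcap_{i=1}^\ell W_i = \bigcup (J_{\sigma_1} \cap \cdots \cap J_{\sigma_\ell})$ over tuples $(\sigma_1,\dots,\sigma_\ell)\in\Omega^1\times\cdots\times\Omega^\ell$ does produce convex pieces (an intersection of grid cubes is convex), but their number grows exponentially in $\ell$ and is therefore useless for the linear bound we want. The key idea is to \emph{localize with respect to $W_1$}: since $J_\sigma\subseteq W_1$ for every $\sigma\in\Omega^1$, we have, for each such $\sigma$,
\[
  (W_1 \cap \cdots \cap W_\ell) \cap J_\sigma = J_\sigma \cap \bigcap_{i=2}^\ell W_i .
\]

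First I would analyze one such local piece. For a fixed cube $Q=J_\sigma$ and each $i$, the set $W_i\cap Q$ is a union of sets of the form $J_{\sigma'}\cap Q$ with $\sigma'\in\Omega^i$; as all the $J_{\sigma'}$ belong to the same level-$n$ grid, each nonempty $J_{\sigma'}\cap Q$ is a face of $Q$ (the common face of two neighboring grid cubes, or $Q$ itself when $\sigma'=\sigma$). Hence $W_i\cap Q$ is a union of faces of $Q$. Since the faces of a $d$-cube are closed under intersection and a $d$-cube has exactly $3^d$ faces of all dimensions, the intersection $\bigcap_{i=2}^\ell(W_i\cap Q)$ is again a union of faces of $Q$, namely of at most $3^d$ of them---crucially, a bound independent of $\ell$. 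Collecting these faces over all $\sigma\in\Omega^1$ exhibits $W_1\cap\cdots\cap W_\ell$ as a union of at most $m:=3^d|\Omega^1|$ compact convex sets $X_1,\dots,X_m$.

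Next I would estimate the two remaining quantities in Lemma~\ref{lem:ringcor} for this family. Each $X_j$ is a face of a level-$n$ grid cube, hence a cube of some dimension $j'\le d$ and side length $r^n$, so $C_k(X_j)=V_k(X_j)=\binom{j'}{k} r^{kn}\le\binom{d}{k} r^{kn}$; thus $b_k\le\binom{d}{k} r^{kn}$. For the intersection number $\Gamma$, note that each $X_j$ in the family is a face of some cube $Q_\sigma$, $\sigma\in\Omega^1$; if $X_{j'}\cap X_j\neq\emptyset$ with $X_{j'}$ a face of $Q_{\sigma'}$, then $Q_{\sigma'}\cap Q_\sigma\neq\emptyset$, so $\sigma'$ is one of at most $3^d$ neighbours of $\sigma$, and each such cube contributes at most $3^d$ faces. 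Hence every $X_j$ meets at most $3^{2d}$ members of the family, so $\Gamma\le 3^{2d}$, a bound depending on $d$ alone. Lemma~\ref{lem:ringcor} then gives
\[
  C_k^{\var}(W_1 \cap \cdots \cap W_\ell) \le m\, 2^{\Gamma} b_k \le 3^d |\Omega^1| \cdot 2^{3^{2d}} \cdot \binom{d}{k} r^{kn},
\]
which is the claim with $c_{d,k}:=3^d\, 2^{3^{2d}}\binom{d}{k}$.

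The main obstacle is the first step: the asymmetric, $W_1$-localized decomposition is what keeps the number of convex pieces linear in $|\Omega^1|$ rather than exponential in $\ell$, and it rests on the observation that within a single grid cube the entire $\ell$-fold intersection collapses to a union of at most $3^d$ faces. Everything else---convexity of the pieces, the face count $3^d$, the grid-locality bound on $\Gamma$, and the scaling $V_k\sim r^{kn}$---is elementary combinatorics of the cubical grid together with standard monotonicity and homogeneity of intrinsic volumes.
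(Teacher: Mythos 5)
Your proof is correct and follows essentially the same route as the paper: both decompose $W_1\cap\cdots\cap W_\ell$ into at most $3^d|\Omega^1|$ convex pieces by localizing to the cubes of $W_1$ and observing that within each $J_\sigma$ the intersection collapses to a union of at most $3^d$ faces, then bound the intersection number by $3^{2d}$ and $b_k$ by $q_{d,k}r^{kn}$ and invoke Lemma~\ref{lem:ringcor}. Even your constant $c_{d,k}=3^d\,2^{3^{2d}}\binom{d}{k}$ matches the paper's $3^d\,2^{3^{2d}}q_{d,k}$.
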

Note that $|\Omega^{1}|\leq M^{dn}=r^{-dn}$, which implies that the right hand side of \eqref{eq:lem-intersect-num1} is always bounded from above by $c_{d,k} r^{(k-d)n}$.
\begin{proof}
   First let $\ell=2$. We write the intersection $W_1\cap W_2$ as a union of convex sets. It is clear that each set $J_\sigma$ in the union $W_1$ intersects at most $3^d$ cubes (the neighboring ones) from the union $W_2$. Let ${\Omega}^{2}_{\sigma}\subset \Omega^{2}$ be the set of indices of the cubes from $W_2$ intersecting $J_\sigma$. Then $|\Omega^{2}_\sigma|\leq 3^d$ and we have
   \begin{align*}
     W_1\cap W_2=\bigcup _{\sigma\in \Omega^{1}} \left(J_\sigma\cap \bigcup _{\omega\in \Omega^{2}_\sigma} J_\omega\right)
     =\bigcup _{\sigma\in \Omega^{1}} \bigcup _{\omega\in \Omega^{2}_\sigma} \left(J_\sigma\cap  J_\omega\right).
   \end{align*}
   This way we have represented $W_1\cap W_2$ as a union of at most $|\Omega^{1}|\cdot 3^d\leq(M^d)^n 3^d$ convex sets $R_{\sigma,\omega}:=J_{\sigma}\cap J_{\omega}$.
   Note that each of the sets $R_{\sigma,\omega}$ is the intersection of two cubes and thus a $k$-face of some cube of level $n$ (of some dimension $k\in\{0,\ldots,d\}$). We may reduce the number of sets in this representation by deleting the double occurrences of any face without changing the union set. Then the reduced family $\mathcal{F}\subset\{R_{\sigma,\omega}\}$ has an intersection number $\Gamma=\Gamma(\mathcal{F})$ bounded from above by $3^d$ times the number of faces of a cube in $\R^d$ (which also equals $3^d$). Indeed, each set $R\in\mathcal{F}$ is contained in a cube of dimension $d$ and any other set $R'\in\mathcal{F}$ intersecting $R$ must be a face of the same cube or of one of the neighboring cubes. Note also that any of the sets $R\in\mathcal{F}$ is convex and contained in a cube of sidelength $r^n$. Therefore,
   \begin{align*}
     C_k^{\var}(R)= C_k(R)\leq  C_k(r^n J) = r^{kn} C_k(J)=r^{kn} q_{d,k},
   \end{align*}
   where he have used the monotonicity, motion invariance and homogeneity of the intrinsic volumes.  Now we can apply Lemma~\ref{lem:ringcor} to the family $\mathcal{F}$ consisting of $m\leq(M^d)^n 3^d$ sets and satisfying $b_k:=\max\{C_k(R):R\in \mathcal{F}\}\leq r^{kn} q_{d,k}$. We obtain
   \begin{align*}
      C_k^{\var}(W_1\cap W_2)\leq |\Omega^{1}| 3^d 2^{3^{2d}} r^{kn} q_{d,k}= c_{d,k} |\Omega^{1}|r^{kn},
   \end{align*}
   where the constant $c_{d,k}:= 3^d 2^{3^{2d}} q_{d,k}$ is independent of $n$. This proves the case $\ell=2$.
   For the general case, fix some $\ell>2$ and note that $W_1\cap\ldots\cap W_\ell$ can be represented by
\begin{align} \label{eq:lem-intersect-num}
     W_1\cap \ldots \cap W_\ell
     =\bigcup _{\sigma\in \Omega^{1}} \bigcup _{\omega_2\in \Omega^{2}_\sigma} \ldots \bigcup _{\omega_\ell\in \Omega^{\ell}_\sigma} J_\sigma\cap   J_{\omega_2}\cap \ldots\cap J_{\omega_\ell},
   \end{align}
   where similarly as before $\Omega^{j}_\sigma$ is the family of those words $\omega\in\Omega^{j}$ for which $J_\sigma \cap J_{\omega} \neq\emptyset$, $j=2,\ldots,\ell$. Now observe that $J_\sigma\cap J_{\omega_2}\cap \ldots\cap J_{\omega_\ell}$ is a finite intersection of cubes of the grid and thus a $k$-face of $J_\sigma$ (of some dimension $k\in\{0,\ldots,d\}$) -- if not empty. Hence, for fixed $\sigma$, there are at most $3^d$ distinct sets in the union corresponding to the faces of $J_\sigma$. Deleting all multiplicities such that no set appears more than once in the union on the right of \eqref{eq:lem-intersect-num}, we again end up with a representation of $W_1\cap \ldots \cap W_\ell$ by at most $|\Omega^{1}|\cdot 3^d$ convex sets and, as before, the intersection number of the reduced family will not exceed $3^{2d}$. Hence \eqref{eq:lem-intersect-num1} follows again from Lemma~\ref{lem:ringcor} with the same constant $c_{d,k}$ as before. This completes the proof for arbitrary integers $\ell\geq 2$.
\end{proof}

\begin{rem} \label{rem:bd-estimate} {If the union set $W_1$ contains many `interior' cubes of the intersection $\bigcap_{j=1}^\ell W_j$, then the above estimate can be improved. Recall that, for  $k\leq d-1$, the $k$-th curvature measure of any set is concentrated on its boundary. Therefore, in order to bound the total curvature variation of the set $\bigcap_{j=1}^\ell W_j$, it is enough to represent this set near its boundary by a union of cubes. To this end, let $\Omega_\partial^1\subseteq \Omega^1$ be the set of those cubes in $\Omega^1$ which have a nonempty intersection with the boundary $\partial\bigcap_{j=1}^\ell W_j$.
  Let $A$ be the set defined by the right hand side of \eqref{eq:lem-intersect-num} when we replace the set $\Omega^1$ in the first union  by $\Omega^1_\partial$. Then $A$ is obviously a union of cubes and a subset of $\bigcap_{j=1}^\ell W_j$. Moreover, it has the property that $\partial\bigcap_{j=1}^\ell W_j\subset \partial A$ and that any point $x\in\partial A$ that is not in $\partial\bigcap_{j=1}^\ell W_j$ has positive distance (at least $r^n$) to $\partial\bigcap_{j=1}^\ell W_j$. Therefore, since curvature measures are locally determined, we conclude that (for any $k\in\{0,\ldots, d-1\}$)
  \begin{align*}
     C_k^\var(W_1\cap\ldots \cap W_\ell)\leq C_k^\var(A)\leq c_{d,k} |\Omega^{1}_\partial| r^{kn},
  \end{align*}
  where the second inequality follows by applying the same argument to $A$ that we applied in the proof of Lemma~\ref{lem:intersect-num} to the set on the right hand side of \eqref{eq:lem-intersect-num}.
  }
\end{rem}

{With Lemma~\ref{lem:intersect-num} and Remark~\ref{rem:bd-estimate}
at hand, we are now in a position to prove Propositions~\ref{prop:Rk-conv} and \ref{prop:Rk-conv2}.}
\begin{proof}[Proof of Propositions~\ref{prop:Rk-conv} and \ref{prop:Rk-conv2}]
  {First note that in both statements the second assertion is an immediate consequence of the first one, which is due to the fact that $|V_k(K)|\leq C_k^\var(K)$ for any polyconvex set $K$. In order to prove the first assertions in  the two propositions, we fix a set $T\subseteq\{1,\ldots,M^d\}$, $|T|\geq 2$ and let $U:=\bigcap_{j\in T} J_j$ (where $J_j$ is the cube of sidelength $r=1/M$ containing $F^j$). $U$ is a cube of some dimension $u\in\{0,\ldots, d-1\}$ and the intersection $\bigcap_{j\in T} F_n^j$ is contained in $U$ (and similarly $\bigcap_{j\in T} C_{n}^j\subseteq U$). Let $H$ be the affine hull of $U$, which is a $u$-dimensional affine space. 
   Since intrinsic volumes are independent of the dimension of the
   ambient space, it is enough to study the intersection of the sets
   $F_n^j\cap H$, $j\in T$ (or $C_{n}^j\cap H$, respectively) in the
   space $H$. The sets $F^j\cap H$ can be modeled by fractal
   percolations on $u$-dimensional cubes. Let $K^{(j)}$, $j\in T${,}
   be independent fractal percolations on $[0,1]^u$ with the same
   parameters $p$ and $M$ as $F$. Denote by $\widetilde{K}^{(j)}$ the
   random set which equals $K^{(j)}$ with probability $p$ and is empty
   otherwise. Then we have, for each $n\in\N$,
   $F_n^j\cap H=\psi(\widetilde{K}_n^{(j)})$, $j\in T$, in distribution, where $\psi:H\to\R^{u}$ is one of the similarities (with factor $1/r$) mapping $U$ to $[0,1]^{u}$. (In fact, it is possible to couple $F^j$ and $\widetilde{K}^{(j)}$ in such a way that this distributional relation becomes an almost sure one. But we do not need this here for our argument.) In particular, it follows that
   \begin{align*}
      C_k^\var(\bigcap_{j\in T} F_n^j)&=C_k^\var(\bigcap_{j\in T} \psi(\widetilde{K}_n^{(j)}))\leq r^{-k} C_k^\var(\bigcap_{j\in T} K_n^{(j)}),
   \end{align*}
   where the equality holds in distribution and the inequality almost surely. For the latter note that, for each realization of the random sets $K^{(j)}$, $j\in T$, either $\widetilde{K}_n^{(j)}=\emptyset$, for some $j\in T$, in which case the inequality is trivial, since the total variation measure is non-negative, or $\widetilde{K}_n^{(j)}={K}_n^{(j)}$ for all $j\in T$, in which case it becomes an equality.
   (Analogously, we have $C_{n}^j\cap H=\psi(\widetilde{D}_n^{(j)})$, $j\in T$, in distribution, where $\widetilde{D}_n^{(j)}:=\overline{[0,1]^u\setminus \widetilde{K}_n^{(j)}}$ is the random set which equals $D_n^{(j)}:=\overline{[0,1]^u\setminus K_n^{(j)}}$ with probability $p$ and $[0,1]^u$ with probability $1-p$.)
}

{Now we are in the position to apply Lemma~\ref{lem:intersect-num}. For each realization of the $K^{(j)}$, $j\in T$, the sets $K^{(j)}_n$ are collections of ($u$-dimensional) level-$n$ cubes as required. Fixing some index $j'\in T$, we denote by $Z'_n$ the number of basic cubes of level $n$ contained in $K^{(j')}$ and infer that
\begin{align*}
      C_k^\var(\bigcap_{j\in T} K_n^{(j)})\leq c_{u,k} Z'_n r^{kn},
   \end{align*}
where $c_{u,k}$ is a universal constant independent of the realization, which means that the above estimate holds almost surely. Observe that the random variables $Z'_n$, $n\in\N$ form a Galton-Watson process with offspring distribution $\text{Bin}(M^u,p)$. It is well known that $\E Z'_n=(M^{u}p)^n$. Setting $c:=r^{-k}c_{u,k}$, we conclude that \begin{align*}
      \E C_k^\var(\bigcap_{j\in T} F_n^j)&\leq r^{-k} \E C_k^\var(\bigcap_{j\in T} K_n^{(j)})\leq c M^{(u-k)n}p^n,
   \end{align*}
which implies (recall that $M^D=M^dp$)
\begin{align*}
     \sum_{n=1}^\infty r^{(D-k)n} \E C_k^\var(\bigcap_{j\in T} F_n^j)&\leq c \sum_{n=1}^\infty r^{(d-u)n}= \frac c{1-r^{d-u}}<\infty,
   \end{align*}
   since $u\leq d-1$. This proves the first assertion in Proposition~\ref{prop:Rk-conv} and completes the proof of this statement.
}

{Now let us look at the first assertion of Proposition~\ref{prop:Rk-conv2}. Fix $k\in\{0,\ldots, d-1\}$. An analogous argument as above now applied to the intersections $\bigcap_{j\in T}C^j_n$ yields
\begin{align*}
      C_k^\var(\bigcap_{j\in T} C_{n}^j)&= r^{-k}C_k^\var(\bigcap_{j\in T} \widetilde{D}_n^{(j)}) 
      \leq c M^{un} r^{kn},
      \end{align*}
where $c$ is as above and $M^{un}$ is an upper bound for the number of basic cubes of level $n$ in the set $\widetilde{D}^{(j')}_n=\overline{[0,1]^u\setminus \widetilde{K}^{(j')}_n}$ (for some fixed index $j'\in T$) even in the case when this set equals $[0,1]^u$. This estimate is not as strong as the one in the previous case. 
Nevertheless, taking expectations, multiplying by $r^{(D-k)n}$ and
summing over $n$ as above yields the desired conclusion, the finiteness
of the expression $ \sum_{n=1}^\infty r^{n(D-k)} \E
C_k^\var(\bigcap_{j\in T} C_{n}^j)$, for all $p\in(r^{d-u},1]$.  This
proves the first assertion in Proposition~\ref{prop:Rk-conv2} for any
index set $T$ such that $u(=u(T))\leq k$, for the full range of $p$ for
which we claimed it (and for some $T$ even for some more $p$). For $T$
such that $u>k$, however, some $p$ are missing (namely the interval
$(r^{d-k},r^{d-u}]$). But in this case, a better estimate can be
obtained by taking Remark~\ref{rem:bd-estimate} into account. It allows
to replace $M^{un}$ in the above estimate by the number $Y_n'$ of
level-$n$ cubes in $\widetilde{D}_n^{(j')}$ that have a nonempty
intersection with $\partial\big(\bigcap_{j\in T}
\widetilde{D}_n^{(j)}\big)$. For any cube $Q$ counted in $Y'_n$ there is
an index $j\in T$ such that $C\cap\partial \widetilde{D}_n^{(j)}\neq
\emptyset$. This in turn means that either one of the neighboring cubes
of $Q$ (i.e., one with nonempty intersection with $Q$) is contained in
$K_n^{(j)}$, or $Q\cap\partial[0,1]^u\neq \emptyset$. Hence the number
$Y'_n$ can be bounded from above by the number of neighbors of the cubes
in the sets $K^{(j)}_n$, $j\in T$, plus the number of cubes intersecting
$\partial[0,1]^u$. In fact, since the $k$-th curvature measure of the
cube $[0,1]^u$ is concentrated on its $k$-faces (recall $k<u$), among
the cubes $Q$ intersecting $\partial[0,1]^u$ that are not already
counted because of their intersection with some cube in some
$K^{(j)}_n${,} only those need to be counted, which intersect a
$k$-face of $\partial[0,1]^u$. Note that there are at most $M^{kn}$ such
cubes for each $k$-face. Summarizing the argument, we obtain
\begin{align*}
      C_k^\var(\bigcap_{j\in T} C_{n}^j)&= r^{-k}C_k^\var(\bigcap_{j\in T} \widetilde{D}_n^{(j)})\leq c r^{kn}\left(\sum_{j\in T} (3^u-1) |K^{(j)}_n|+ f_k M^{kn}\right),
      \end{align*}
where $f_k$ is the number of $k$-faces of $[0,1]^u$ and $3^u-1$ is  the number of neighboring cubes of the same size of a given $u$-dimensional cube. Taking expectations and noting that $\E|K^{(j)}_n|=(M^u p)^n$ for each $j\in T$, we get
\begin{align*}
    r^{(D-k)n} \E C_k^\var(\bigcap_{j\in T} C_{n}^j)&\leq c_1 r^{Dn}(M^{u}p)^n+ c_2 r^{Dn} M^{kn}=c_1 r^{(d-u)n}+c_2 r^{(D-k)n},
\end{align*}
where $c_1=c \cdot|T| (3^u -1)$ and $c_2=c\cdot f_k$. Summing over $n$, we conclude that
\begin{align*}
     \sum_{n=1}^\infty r^{(D-k)n} \E C_k^\var(\bigcap_{j\in T} C_n^j)&\leq c_1 \sum_{n=1}^\infty r^{(d-u)n} + c_2 \sum_{n=1}^\infty r^{(D-k)n},
   \end{align*}
where the first sum on the right converges (for all $p\in(0,1]$) since $u\leq d-1$ and the second sum converges for all $p$ such that $D>k$ (i.e.\ for $p\in(r^{d-k},1]$).
This shows the convergence of the sum on the left for any $p\in(r^{d-k},1]$ for the case $k<u$ and completes the proof of the first assertion of Proposition~\ref{prop:Rk-conv2}.
}
\end{proof}

\paragraph{\bf Acknowledgements.}
We thank Klaus Mecke and Philipp Sch\"onh\"ofer, for inspiring
discussions and their preliminary work \cite{MeckeSchoenhoefer15,
Schoenhoefer14} which motivated the authors to look at this model more
closely. {We are grateful to Erik Broman and Federico Camia for
helpful comments on the previous results thresholds in fractal
percolation.}
During the work on this project both authors have been members of the
DFG research unit \emph{Geometry and Physics of Spatial Random Systems}
at Karlsruhe. We gratefully acknowledge support from grants  number
HU1874/3-2, and LA965/6-2.
Part of this research was carried out while the second author was
staying at the \emph{Institut Mittag-Leffler} participating in the 2017
research programme
\emph{Fractal Geometry and Dynamics}. He would like to thank the staff
as well as the participants and organizers for the stimulating
atmosphere and support.

\bibliographystyle{plain} 
\bibliography{mp2020-arxiv}

\begin{thebibliography}{10}

\bibitem{BobSkr19}
Omer Bobrowski and Primoz Skraba.
\newblock Homological percolation and the {E}uler characteristic.
\newblock {\em {P}reprint}, page 29 pages, 2019.
\newblock arXiv.org/abs/1910.10146.

\bibitem{BroCam08}
Erik~I. Broman and Federico Camia.
\newblock Large-{$N$} limit of crossing probabilities, discontinuity, and
  asymptotic behavior of threshold values in {M}andelbrot's fractal percolation
  process.
\newblock {\em Electron. J. Probab.}, 13:no. 33, 980--999, 2008.

\bibitem{BroCam10}
Erik~I. Broman and Federico Camia.
\newblock Universal behavior of connectivity properties in fractal percolation
  models.
\newblock {\em Electron. J. Probab.}, 15:1394--1414, 2010.

\bibitem{BCJM13}
Erik~I. Broman, Federico Camia, Matthijs Joosten, and Ronald Meester.
\newblock Dimension (in)equalities and {H}\"{o}lder continuous curves in
  fractal percolation.
\newblock {\em J. Theoret. Probab.}, 26(3):836--854, 2013.

\bibitem{CC89}
J.~T. Chayes and L.~Chayes.
\newblock The large-{$N$} limit of the threshold values in {M}andelbrot's
  fractal percolation process.
\newblock {\em J. Phys. A}, 22(11):L501--L506, 1989.

\bibitem{CCD88}
J.~T. Chayes, L.~Chayes, and R.~Durrett.
\newblock Connectivity properties of {M}andelbrot's percolation process.
\newblock {\em Probab. Theory Related Fields}, 77(3):307--324, 1988.

\bibitem{DM90}
F.~M. Dekking and R.~W.~J. Meester.
\newblock On the structure of {M}andelbrot's percolation process and other
  random {C}antor sets.
\newblock {\em J. Statist. Phys.}, 58(5-6):1109--1126, 1990.

\bibitem{Don15}
Henk Don.
\newblock New methods to bound the critical probability in fractal percolation.
\newblock {\em Random Structures Algorithms}, 47(4):710--730, 2015.

\bibitem{Falconer86}
K.~J. Falconer.
\newblock Random fractals.
\newblock {\em Math. Proc. Cambridge Philos. Soc.}, 100(3):559--582, 1986.

\bibitem{GKSM2013}
D.~G\"oring, M.~A. Klatt, C.~Stegmann, and K.~Mecke.
\newblock Morphometric analysis in gamma-ray astronomy using {Minkowski}
  functionals: {Source} detection via structure quantification.
\newblock {\em Astron. Astrophys.}, 555:A38, 2013.

\bibitem{Graf87}
Siegfried Graf.
\newblock Statistically self-similar fractals.
\newblock {\em Probab. Theory Related Fields}, 74(3):357--392, 1987.

\bibitem{KWgithub}
M.~A. Klatt and S.~Winter.
\newblock {FractalPercolationMink---Approximate fractal percolation and compute
  its Euler characteristic.}
\newblock {GitHub repository}, 2019.
\newblock http://github.com/michael-klatt/fractal-percolation-mink.

\bibitem{KSM17}
Michael~A Klatt, Gerd~E Schr\"oder-Turk, and Klaus Mecke.
\newblock Anisotropy in finite continuum percolation: threshold estimation by
  minkowski functionals.
\newblock {\em J. Stat. Mech. Theor. Exp.}, 2017(2):023302, 2017.

\bibitem{KW20}
Michael~A. Klatt and Steffen Winter.
\newblock Geometric functionals of fractal percolation. {II.} {A}lmost sure
  convergence and second moments.
\newblock {\em {I}n preparation}.

\bibitem{Knuefing05}
Lydia Kn\"ufing, Hauke Schollmeyer, Hans Riegler, and Klaus Mecke.
\newblock Fractal analysis methods for solid alkane monolayer domains at
  {SiO2/}air interfaces.
\newblock {\em Langmuir}, 21(3):992--1000, 2005.

\bibitem{Mandelbrot74}
Benoit~B. Mandelbrot.
\newblock Intermittent turbulence in self-similar cascades: divergence of high
  moments and dimension of the carrier.
\newblock {\em Journal of Fluid Mechanics}, 62(2):331--358, 1974.

\bibitem{MatsumotoNishimura1998}
Makoto Matsumoto and Takuji Nishimura.
\newblock Mersenne twister: A 623-dimensionally equidistributed uniform
  pseudo-random number generator.
\newblock {\em ACM Trans. Model. Comput. Simul.}, 8:3--30, 1998.

\bibitem{MW86}
R.~Daniel Mauldin and S.~C. Williams.
\newblock Random recursive constructions: asymptotic geometric and topological
  properties.
\newblock {\em Trans. Amer. Math. Soc.}, 295(1):325--346, 1986.

\bibitem{MeckeSeyfried2002}
K.~R. Mecke and A.~Seyfried.
\newblock {Strong dependence of percolation thresholds on polydispersity}.
\newblock {\em EPL -- Europhys. Lett.}, 58(1):28, 2002.

\bibitem{MeckeWagner1991}
K.~R. Mecke and H.~Wagner.
\newblock {Euler characteristic and related measures for random geometric
  sets}.
\newblock {\em J. Stat. Phys.}, 64(3--4):843, 1991.

\bibitem{NMW08}
Richard~A Neher, Klaus Mecke, and Herbert Wagner.
\newblock Topological estimation of percolation thresholds.
\newblock {\em J. Stat. Mech. Theor. Exp.}, 2008(01):P01011, 2008.

\bibitem{RouCol16}
Emmanuel Roubin and Jean-Baptiste Colliat.
\newblock Critical probability of percolation over bounded region in
  $n$-dimensional {E}uclidean space.
\newblock {\em Journal of Statistical Mechanics: Theory and Experiment},
  2016(3):033306, mar 2016.

\bibitem{Schneider14}
Rolf Schneider.
\newblock {\em Convex bodies: the {B}runn-{M}inkowski theory}, volume 151 of
  {\em Encyclopedia of Mathematics and its Applications}.
\newblock Cambridge University Press, Cambridge, expanded edition, 2014.

\bibitem{SchneiderWeil08}
Rolf Schneider and Wolfgang Weil.
\newblock {\em Stochastic and integral geometry}.
\newblock Probability and its Applications (New York). Springer-Verlag, Berlin,
  2008.

\bibitem{Schoenhoefer14}
Philipp Sch\"{o}nh\"{o}fer.
\newblock {\em Fractal Geometries: Scaling of Intrinsic Volumes}.
\newblock Master thesis. Friedrich-Alexander-Universit\"at Erlangen-N\"urnberg,
  2014.

\bibitem{MeckeSchoenhoefer15}
Philipp Sch\"{o}nh\"{o}fer and Klaus Mecke.
\newblock The shape of anisotropic fractals: scaling of {M}inkowski
  functionals.
\newblock In {\em Fractal geometry and stochastics {V}}, volume~70 of {\em
  Progr. Probab.}, pages 39--52. Birkh\"{a}user/Springer, Cham, 2015.

\bibitem{BE96}
J.~van~den Berg and A.~Ermakov.
\newblock A new lower bound for the critical probability of site percolation on
  the square lattice.
\newblock {\em Random Structures and Algorithms}, 8(3):199--212, 1996.

\bibitem{White01}
Damien~G. White.
\newblock On the value of the critical point in fractal percolation.
\newblock {\em Random Structures Algorithms}, 18(4):332--345, 2001.

\bibitem{W08}
Steffen Winter.
\newblock Curvature measures and fractals.
\newblock {\em Dissertationes Math. (Rozprawy Mat.)}, 453:66, 2008.

\bibitem{WZ13}
Steffen Winter and Martina Z\"{a}hle.
\newblock Fractal curvature measures of self-similar sets.
\newblock {\em Adv. Geom.}, 13(2):229--244, 2013.

\bibitem{Zaehle11}
M.~Z\"{a}hle.
\newblock Lipschitz-{K}illing curvatures of self-similar random fractals.
\newblock {\em Trans. Amer. Math. Soc.}, 363(5):2663--2684, 2011.

\end{thebibliography}

\end{document}